\numberwithin{equation}{section}
\theoremstyle{plain}
\newtheorem{theorem}{Theorem}[section]
\newtheorem{lemma}[theorem]{Lemma}
\newtheorem{proposition}[theorem]{Proposition}
\newtheorem{corollary}[theorem]{Corollary}
\theoremstyle{definition}
\newtheorem{definition}[theorem]{Definition}
\newtheorem{example}[theorem]{Example}
\newtheorem{notation}[theorem]{Notation}
\newtheorem{remark}[theorem]{Remark}
\let\c@equation\c@theorem  % incorporate equation numbering
\DeclareMathOperator{\EEnd}{{\underline{\sf End}}}
\DeclareMathOperator{\Fun}{{\sf Fun}}
\DeclareMathOperator{\End}{{\sf End}}
\DeclareMathOperator{\Hom}{{\sf Hom}}
\DeclareMathOperator{\Rep}{{\sf Rep}}
\DeclareMathOperator{\Mod}{{\sf Mod}}
\DeclareMathOperator{\sVec}{{\sf Vec}}
\DeclareMathOperator{\bVec}{\mathbf{Vec}}
\DeclareMathOperator{\Bimod}{{\sf Bimod}}
\DeclareMathOperator{\id}{{\sf id}}
\DeclareMathOperator{\FPdim}{FPdim}
\newcommand{\kk}{\Bbbk}
\newcommand{\red}{\textcolor{black}}
\newcommand{\C}{\ensuremath{\mathcal{C}}}
\newcommand{\D}{\ensuremath{\mathcal{D}}}
\newcommand{\M}{\ensuremath{\mathcal{M}}}
\newcommand{\xto}[1]{\xrightarrow{#1}}
\newcommand{\brk}{\smallbreak}
\begin{document}

\title[Tensor algebras in finite tensor categories]
{Tensor algebras in finite tensor categories}

\author{Pavel Etingof}
\address{Department of Mathematics, Massachusetts Institute of Technology, Cambridge, MA 02139, USA}
\email{etingof@math.mit.edu}

\author{Ryan Kinser}
\address{Department of Mathematics, University of Iowa, Iowa City, Iowa 52242, USA}
\email{ryan-kinser@uiowa.edu}

\author{Chelsea Walton}
\address{Department of Mathematics, The University of Illinois at Urbana-Champaign, Urbana, IL, USA}
\email{notlaw@illinois.edu}

\bibliographystyle{alpha}

%The IMRN word limit is 125, our arxiv version has 165
\begin{abstract}
This paper introduces methods for classifying actions of finite-dimensional Hopf algebras on path algebras of quivers, and more generally on tensor algebras $T_B(V)$ where $B$ is semisimple.  We work within the broader framework of finite (multi-)tensor categories $\mathcal{C}$, classifying tensor algebras in $\mathcal{C}$ in terms of $\mathcal{C}$-module categories. We obtain two classification results for actions of semisimple Hopf algebras: the first for actions which preserve the ascending filtration on tensor algebras, and the second for actions which preserve the descending filtration on completed tensor algebras. Extending to more general fusion categories, we illustrate our classification result for tensor algebras in the pointed fusion categories ${\sf Vec}_{G}^{\omega}$ and in group-theoretical fusion categories, especially for the representation category of the Kac-Paljutkin Hopf algebra.
 \end{abstract}

\subjclass[2010]{
18D10, 16T05, 16D90}

\keywords{
tensor algebra, tensor category, module category, path algebra of a quiver, finite-dimensional Hopf algebra}

\maketitle

\setcounter{tocdepth}{3}

\tableofcontents

%%%%%%%%%%%%%%%%%%%%%%%%%%%%%%%%%%%%%%%
%%%%%%%%%%%%%%%%%%%%%%%%%%%%%%%%%%%%%%%
%%%%%%%%%%%%%%%%%%%%%%%%%%%%%%%%%%%%%%%

\section{Introduction}

Let $\kk$ be an algebraically closed field of characteristic 0. One motivation of this work is to continue the last two authors' study of finite quantum symmetries of path algebras of quivers $\kk Q$. As finite groups are viewed classically as collections of finite symmetries (i.e., automorphisms of finite order) of a given algebra, finite-dimensional Hopf algebras are widely accepted to be an algebraic structure that captures an algebra's {\it finite quantum symmetries}. The main two classes of finite-dimensional Hopf algebras over $\kk$  are those that are {\it semisimple} (as a $\kk$-algebra, that is, all of its modules can be decomposed into a direct sum of simple modules), and those that are {\it pointed} (as a $\kk$-coalgebra, that is, all of its simple comodules are 1-dimensional). The actions of finite-dimensional pointed Hopf algebras on path algebras were investigated previously in  \cite{KinserWalton}, and one aim of the work here is to study actions of semisimple  Hopf algebras on $\kk Q$.  
To achieve this, we establish a broader framework: we analyze tensor algebras in finite multi-tensor categories, which includes actions of finite-dimensional Hopf algebras on path algebras of quivers as a special case.

\brk We begin by providing preliminary results on tensor algebras $T_S(E)$ in finite multi-tensor categories $\C$; here, $S$ is an exact algebra in $\mathcal{C}$ and $E$ is an $S$-bimodule in $\mathcal{C}$; these are referred to as {\it $\C$-tensor algebras} [Definition~\ref{S,E}]. (We can  allow $T_S(E)$ to be in the ind-completion of $\C$, but omit further mention of this technicality throughout the introduction.)  Our first result is that any tensor algebra in $\C$ can be decomposed into {\it minimal} ones in the sense that $E$ is indecomposable [Proposition~\ref{prop:decomp}]. Then one of our main results, Theorem~\ref{thm:param}, classifies (minimal) tensor algebras in a given finite multi-tensor category $\C$; this classification is given in terms of $\mathcal{C}$-module categories $\mathcal{M}$ and (indecomposable) objects in $\mathsf{Fun}_\C(\M,\M)$. The classification in Theorem~\ref{thm:param} is up to {\it equivalence of $\C$-tensor algebras} $T_S(E)$ [Definition~\ref{def:equiv}], which is a notion of equivalence induced by Morita equivalence of the base algebra $S$ and resulting conjugacy class of $E$. This framework and  result are established in Sections~\ref{sec:Cmulti} and~\ref{sec:minimal}.

\smallskip

In the case when $\C$ is multi-fusion,  we  show that there are only finitely many equivalence classes of minimal faithful $\C$-tensor algebras $T_S(E)$, up to Morita equivalence  of $S$ and up to conjugacy class of $E$ [Corollary~\ref{cor:finitelymany}].  
We also study in Section~\ref{sec:CHopf} certain filtration preserving actions of semisimple Hopf algebras on tensor $\kk$-algebras and their completions that do not fit within the framework above, a priori, but can be classified up to equivalence by Theorem \ref{thm:param} with additional arguments. See Proposition \ref{prop:filtration} and Theorem \ref{thm:complete}. 

%\smallskip

\brk 
Now an advantage of our categorical framework for studying finite-dimensional Hopf algebra actions on tensor algebras is that there are many finite (multi-)tensor categories $\mathcal{C}$ over which (indecomposable) semisimple $\mathcal{C}$-module categories are concretely understood, especially in terms of elementary group-theoretic data.  Our main result  [Theorem \ref{thm:param}] then allows us to classify tensor algebras in such finite tensor categories by elementary group-theoretic data.
For instance, due to results of Ostrik \cite{OstrikIMRN2003} and Natale \cite{Natale2016}, this is true for the pointed fusion category ${\sf Vec}_G^\omega$, whose objects are $G$-graded finite-dimensional vector spaces, for $G$ a finite group, where the associativity constraint is given by $\omega \in H^3(G, \kk^\times)$ [Proposition~\ref{prop:OstNat}]. Using this result, we study minimal ${\sf Vec}_G^\omega$-tensor algebras, via several detailed examples, in Section~\ref{sec:VecGw}.

\brk Indecomposable semisimple module categories over group-theoretical fusion categories $\mathcal{C}$ are also completely understood in terms of group-theoretic data, and we exploit this in Section~\ref{sec:tengt} to examine tensor algebras in such categories  $\mathcal{C}$, especially for those equipped with a fiber functor. We proceed  in Section~\ref{sec:recon} to describe indecomposable semisimple algebras in $\mathcal{C}$, then use this to explicitly classify indecomposable semisimple algebras in the representation category of the Kac-Paljutkin Hopf algebra $H_8$ in Section~\ref{sec:H8}.

\brk One could also apply Theorem~\ref{thm:param} to study tensor algebras in other finite tensor categories for which semisimple module categories are understood, and we leave this to future investigation. One could consider, for instance, the Drinfeld center of ${\sf Vec}_G^\omega$, and the category $\mathcal{C}_q$ consisting of comodules over the quantized function algebra $\mathcal{O}_q(SL_2)$; see  \cite[Theorem~3.6]{OstrikIMRN2003}, \cite[Theorem~2.1]{Ostrik:Nonss} \cite[Theorem~2.5]{EO}.

\brk Finally, in Section~\ref{sec:Path} we return to our study of finite-dimensional Hopf algebra actions on path algebras of quivers, by first introducing the notion of a {\it $\mathcal{C}$-path algebra} for a finite tensor category $\mathcal{C}$ equipped with a fiber functor $F$ [Definition~\ref{def:Cpath}]. This is simply a $\mathcal{C}$-tensor algebra $T_S(E)$ in the case when $F(S)$ is a commutative $\kk$-algebra; see Remark~\ref{rem:comm} for justification of this terminology. We determine  in this section necessary and sufficient conditions for a $\mathcal{C}$-tensor algebra to be a $\mathcal{C}$-path algebra, for $\mathcal{C}$ group-theoretical, and end with a discussion of ${\sf Rep}(H_8)$-path algebras in Example~\ref{ex:H8path}.

%%%%%%%%%%%%%%%%%
%%%%%%%%%%%%%%%%%
%%%%%%%%%%%%%%%%%

\section{Background material} \label{sec:bg}

%%%%%%%%%%%%%%%%%

In this section, we provide a review of certain monoidal categories, namely (multi-)tensor and (multi-) fusion categories. We also review module categories over and algebraic structures within these monoidal categories. We refer the reader to the text \cite{EGNO} and the references therein for further details.

\subsection{(Multi-)Tensor categories and (multi-)fusion categories}
Let $\C$ be a $\kk$-linear abelian category. We say that $\C$ is {\it locally finite} if each Hom space is a finite-dimensional $\kk$-vector space and if every object has finite length.  
Given such a $\C$, we write {\sf Ind}($\mathcal{C}$) for the ind-completion of $\mathcal{C}$.
 
\brk By $X \in \mathcal{C}$ we mean that $X$ is an object of $\mathcal{C}$.  A nonzero $X \in \mathcal{C}$ is {\it simple} (or {\it irreducible}) if {\bf 0} and $X$ are its only subobjects. A category $\mathcal{C}$ is {\it semisimple} if every object is a direct sum of simple objects. Moreover, $X \in \mathcal{C}$ is {\it projective} if the functor ${\sf Hom}_{\mathcal{C}}(X, $--$)$ is exact, and is {\it indecomposable} if it is nonzero and cannot be decomposed as the direct sum of nonzero subobjects. Simple objects in a category $\mathcal{C}$ are indecomposable, and the converse holds when $\mathcal{C}$ is semisimple. Let Irr($\mathcal{C}$) denote the set of isomorphism classes of simple objects of $\C$. We say that a locally finite $\kk$-linear abelian category $\mathcal{C}$ is {\it finite} if the cardinality of Irr($\mathcal{C}$) is finite and if $\mathcal{C}$ has enough projectives (i.e., every simple object has a projective cover); in this case, the cardinality of Irr($\mathcal{C}$) is referred to as  the {\it rank} of $\mathcal{C}$.

\brk A {\it multi-tensor category} $\mathcal{C}$ is a locally finite, $\kk$-linear, abelian, rigid, monoidal category, i.e., $\C$ is equipped with a bifunctor $\otimes \colon \mathcal{C} \times \mathcal{C} \to \mathcal{C}$, associativity isomorphisms $\{a_{X,Y,Z}\colon (X\otimes Y) \otimes Z \overset{\sim}{\to} X \otimes (Y \otimes Z) ~|~ X,Y,Z \in \mathcal{C}\}$, and a unit object ${\bf 1} \in \mathcal{C}$ with natural isomorphisms $l_X\colon \mathbf{1} \otimes X \xto{\sim} X$ and $r_X\colon X \otimes \mathbf{1} \xto{\sim} X$ for all $X \in \mathcal{C}$, and with isomorphism $\mathbf{1} \otimes \mathbf{1} \xto{\sim} \mathbf{1}$ all satisfying certain pentagon, triangle, and rigidity axioms \cite[Sections~2.1 and~2.10]{EGNO}. In particular, for any $X \in \C$ there exists objects $X^*$ and ${}^* X$ that serve as its {\it left dual} and {\it right dual}, respectively, via  {\it (co)evaluation} morphisms $({\sf co}){\sf ev}_X$ in $\C$.

\brk We have the following result on decomposing the identity object in a multi-tensor category.

\begin{lemma} \label{lem:idem}
In a multi-tensor category $ \mathcal{C}$, there is a decomposition ${\bf 1} = \bigoplus_{i}{\bf 1}_i$, where $\{{\bf 1}_i\}_i$ are pairwise non-isomorphic simple objects in $\mathcal{C}$ that satisfy ${\bf 1}_i \otimes {\bf 1}_j  \cong \delta_{ij} {\bf 1}_i$ and ${\bf 1}_i^* \cong {}^*{\bf 1}_i \cong {\bf 1}_i$. Furthermore, the set of summands $\{{\bf 1}_i\}_i$ are uniquely determined as subobjects of $\mathbf{1}$, up to reordering.
\end{lemma}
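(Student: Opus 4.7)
The strategy is to analyze the endomorphism algebra $R := \End_\C(\mathbf{1})$, transfer its primitive-idempotent decomposition to $\mathbf{1}$, and then use rigidity to refine the resulting decomposition into the required collection of orthogonal, simple, self-dual summands. First I would check that $R$ is a finite-dimensional commutative $\kk$-algebra: local finiteness of $\C$ gives $\dim_\kk R < \infty$, and the Eckmann--Hilton argument applied to the two bilinear operations on $R$ (composition, and $(f, g) \mapsto l_\mathbf{1} \circ (f \otimes g) \circ l_\mathbf{1}^{-1}$), which share the unit $\mathrm{id}_\mathbf{1}$ and satisfy the interchange law by functoriality of $\otimes$, forces both operations to coincide and be commutative. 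Writing $1_R = \sum_i e_i$ as a complete set of primitive orthogonal idempotents and setting $\mathbf{1}_i := \mathrm{Im}(e_i) \hookrightarrow \mathbf{1}$ then yields a decomposition $\mathbf{1} = \bigoplus_i \mathbf{1}_i$ into indecomposable subobjects, with $\End_\C(\mathbf{1}_i) = e_i R e_i$ local.

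The orthogonality $\mathbf{1}_i \otimes \mathbf{1}_j \cong \delta_{ij}\mathbf{1}_i$ comes next. Since rigidity makes $\otimes$ exact in each variable (each $X \otimes -$ admits ${}^*X \otimes -$ as both adjoints), the isomorphism $l_{\mathbf{1}_j}: \mathbf{1} \otimes \mathbf{1}_j \xrightarrow{\sim} \mathbf{1}_j$ decomposes as $\bigoplus_i (\mathbf{1}_i \otimes \mathbf{1}_j) \cong \mathbf{1}_j$. Krull--Schmidt (which applies in the locally finite $\kk$-linear setting) together with indecomposability of $\mathbf{1}_j$ forces exactly one summand to be nonzero and isomorphic to $\mathbf{1}_j$, say for the index $i = \sigma(j)$; applying the analogous argument with $r_{\mathbf{1}_j}$ produces a unique index $\tau(j)$ with $\mathbf{1}_j \otimes \mathbf{1}_{\tau(j)} \cong \mathbf{1}_j$. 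The triangle axiom applied to $\mathbf{1}_i \otimes \mathbf{1} \otimes \mathbf{1}_k$, by expanding the middle copy of $\mathbf{1}$, matches the two decompositions and forces $\sigma(j) = \tau(j) = j$.

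To promote indecomposability to simplicity, I would study the local commutative algebra $R_i := \End_\C(\mathbf{1}_i) = e_i R e_i$. The now-established isomorphism $\mathbf{1}_i \otimes \mathbf{1}_i \cong \mathbf{1}_i$ identifies tensor product of endomorphisms on $R_i$ with composition (again by Eckmann--Hilton), and one then uses rigidity (via the categorical trace $\mathrm{tr}: \End_\C(X) \to \End_\C(\mathbf{1})$ built from (co)evaluation) to detect any nonzero nilpotent element of $R_i$ and derive a contradiction; thus $R_i \cong \kk$, since $\kk$ is algebraically closed, and each $\mathbf{1}_i$ is simple. Pairwise non-isomorphism of the $\mathbf{1}_i$ is then immediate from $\mathbf{1}_i \otimes \mathbf{1}_j = 0$ for $i \ne j$.

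Finally, the contravariant equivalence $(-)^*$ permutes the simple summands of $\mathbf{1}^* \cong \mathbf{1}$, giving $\mathbf{1}_i^* \cong \mathbf{1}_{\pi(i)}$ for some permutation $\pi$; the nonzero coevaluation $\mathrm{coev}_{\mathbf{1}_i}: \mathbf{1} \to \mathbf{1}_i \otimes \mathbf{1}_{\pi(i)}$ must land in a nonzero simple summand of $\mathbf{1}$, and the orthogonality $\mathbf{1}_i \otimes \mathbf{1}_{\pi(i)} \ne 0$ forces $\pi(i) = i$, giving $\mathbf{1}_i^* \cong \mathbf{1}_i$; the same argument applied to ${}^*(-)$ gives ${}^*\mathbf{1}_i \cong \mathbf{1}_i$. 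Uniqueness of the family $\{\mathbf{1}_i\}$ as subobjects of $\mathbf{1}$ up to reordering is Krull--Schmidt. The main obstacle will be the simplicity step: a finite-dimensional local commutative $\kk$-algebra can carry nonzero nilpotents even over an algebraically closed field (e.g.\ $\kk[x]/(x^2)$), and ruling these out for $R_i$ genuinely requires rigidity, not merely biexactness of $\otimes$.
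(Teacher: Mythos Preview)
Your strategy is essentially the one the paper invokes by citing \cite[Section~4.3]{EGNO}: decompose $\mathbf{1}$ via the primitive idempotents of $R=\End_\C(\mathbf{1})$, and read off orthogonality, simplicity, and self-duality from there. The paper itself proves only the uniqueness clause, and does so more sharply than you do: once one knows $R$ is semisimple (this is \cite[Theorem~4.3.1]{EGNO}), it is isomorphic to $\kk^n$, and in $\kk^n$ the primitive idempotents are the standard basis vectors---literally unique as elements, not merely up to conjugacy. Hence the subobjects $\mathbf{1}_i=\mathrm{Im}(e_i)$ are uniquely determined. Your appeal to Krull--Schmidt gives uniqueness of the summands only up to isomorphism; to get uniqueness \emph{as subobjects} you must also use that the $\mathbf{1}_i$ are simple and pairwise non-isomorphic (so any inclusion $\mathbf{1}'_j\hookrightarrow\bigoplus_i\mathbf{1}_i$ factors through a single summand). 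That is fine, but it is not ``just Krull--Schmidt.''

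The one genuine gap is your simplicity step. A categorical trace $\End_\C(X)\to\End_\C(\mathbf{1})$ built from (co)evaluation is not available in a merely rigid category: $\mathrm{coev}_X$ lands in $X\otimes X^*$ while $\mathrm{ev}_X$ comes from $X^*\otimes X$, and closing the loop requires a pivotal structure (or a braiding), neither of which is assumed. Even if you first establish $\mathbf{1}_i^*\cong\mathbf{1}_i$ (which you can, from orthogonality and the zig-zag identities, before knowing simplicity), it is not clear how such a pairing ``detects'' a nilpotent $f\in R_i$. More to the point, your closing remark that rigidity is genuinely required here is incorrect: \cite[Theorem~4.3.1]{EGNO} proves semisimplicity of $\End_\C(\mathbf{1})$ for any multiring category, using only biexactness of $\otimes$. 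So the right fix is either to cite that result or to reproduce its argument, rather than to invoke a trace.
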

\begin{proof}
This is proved in \cite[Section~4.3]{EGNO} except that the uniqueness is not explicitly mentioned.  The summands $\mathbf{1}_i$ are the images of a complete collection of primitive orthogonal idempotents in $\mathsf{End}_{\C}(\mathbf{1})$.  Since $\mathsf{End}_{\C}(\mathbf{1})$ is semisimple, thus isomorphic to a direct sum of finitely many copies of $\kk$ \cite[Theorem~4.3.1]{EGNO}, these primitive idempotents are uniquely determined from $\mathbf{1}$.  Thus, the result holds. \end{proof}

\brk 
A multi-tensor category $\mathcal{C}$ is called a {\it tensor category}
if  {\sf End}$_{\mathcal{C}}({\bf 1}) \cong \kk$, that is, if {\bf 1} is a simple object of $\mathcal{C}$.  Examples of tensor categories include:
\begin{itemize}
\item the categories {\sf Rep}($G$) and {\sf Rep}($H$) of finite-dimensional representations of a finite group $G$ and of a finite-dimensional Hopf algebra $H$, respectively; and
\item the categories {\sf Vec}, ${\sf Vec}_G$,  and ${\sf Vec}_G^\omega$ of finite-dimensional $\kk$-vector spaces, those that are $G$-graded, and those that are $G$-graded with associativity constraint $\omega \in Z^3(G, \kk^\times)$, respectively.
\end{itemize}

\noindent Though not rigid, we also consider the category  $\bVec$ of all vector spaces.

  \brk A {\it fiber functor} on a tensor category $\mathcal{C}$ is an exact faithful tensor functor $F\colon \mathcal{C} \to {\sf Vec}$ such that $F({\bf 1}) = \kk$, equipped with natural isomorphisms $F(X) \otimes F(Y) \overset{\sim}{\to} F(X \otimes Y)$ for all $X,Y \in \mathcal{C}$. Examples of fiber functors include  forgetful functors on ${\sf Rep}(G)$, on ${\sf Rep}(H)$, and on ${\sf Vec}_G$. But there does not exist a fiber functor on ${\sf Vec}_G^\omega$ when $\omega$ is cohomologically nontrivial \cite[Example~5.1.3]{EGNO}.
  
\brk A tensor category $\mathcal{C}$ is  called {\it pointed} if all of its simple objects are invertible, i.e., $X \otimes X^* \cong X^* \otimes X \cong {\bf 1}$ via the co/evaluation maps for all $X \in \mathcal{C}$.
 Examples of invertible objects include 1-dimensional representations in  {\sf Rep}($G$) for  a group $G$. Hence, {\sf Rep}($G$) is a pointed tensor category when $G$ is an abelian group. The categories {\sf Vec}, ${\sf Vec}_G$,  and ${\sf Vec}_G^\omega$, for a finite  group $G$, are also pointed tensor categories \cite[Example~5.11.2]{EGNO}.

 \brk A {\it (multi-)fusion category} is a finite semisimple (multi-)tensor category $\mathcal{C}$.
  Examples of fusion categories include {\sf Rep}($H$) where $H$ is a semisimple  Hopf algebra, and also include  {\sf Vec}, ${\sf Vec}_G$,  and ${\sf Vec}_G^\omega$.

\brk Next, we turn our attention to module categories. A {\it left module category} over a multi-tensor category $\mathcal{C}$ is a locally finite, $\kk$-linear abelian category  $\mathcal{M}$ equipped with a bifunctor $\otimes\colon \mathcal{C} \times \mathcal{M} \to \mathcal{M}$ which is bilinear on morphisms and exact in the first variable, a natural isomorphism for associativity satisfying the pentagon axiom, and  for each $M \in \mathcal{M}$ a natural isomorphism ${\bf 1} \otimes M \overset{\sim}{\to} M$ satisfying the triangle axiom. {\it Right module categories} are defined analogously. A  module category $\mathcal{M}$ over $\mathcal{C}$ is {\it indecomposable} if it is nonzero and is not equivalent to a direct sum of two nontrivial module categories over $\mathcal{C}$. Moreover, $\mathcal{M}$ is called {\it faithful} if each simple object ${\bf 1}_i \in \mathcal{C}$ in Lemma~\ref{lem:idem} acts by a nonzero functor on $\mathcal{M}$. Also, $\mathcal{M}$ is {\it exact} if for every projective object $P \in \mathcal{C}$ and every object $M \in \mathcal{M}$ the object $P \otimes M$ is projective in $\mathcal{M}$. Note that any semisimple module category is exact since any object in a semisimple category is projective. Furthermore, if $\C$ is multi-fusion, then the exact module categories over $\C$ are precisely the semisimple ones.

\brk The collection of (left/ right) module categories over a multi-tensor category $\mathcal{C}$ forms a 2-category, which is denoted by ${\sf Mod}(\mathcal{C})$ \cite[Remark~7.12.15]{EGNO}.
For  $\mathcal{C}$-module categories $\mathcal{M}$ and $\mathcal{N}$, denote by {\sf Fun}$_{\mathcal{C}}(\mathcal{M},\mathcal{N})$ the category consisting of right exact $\mathcal{C}$-module functors $ \mathcal{M} \to \mathcal{N}$. We denote the category {\sf Fun}$_{\mathcal{C}}(\mathcal{M},\mathcal{M})$ by $\mathcal{C}_{\mathcal{M}}^*$ and call it the {\it dual category to $\mathcal{C}$ with respect to $\mathcal{M}$}.

\begin{theorem} \label{thm:Fun-ss} \cite[Sections~7.11, 7.12]{EGNO} \cite[Theorems~2.15, 2.18]{ENO}
For any exact (resp., finite semisimple) module categories $\mathcal{M}, \mathcal{N}$ over a finite multi-tensor (resp., multi-fusion) category $\mathcal{C}$, we have that the category {\sf Fun}$_{\mathcal{C}}(\mathcal{M},\mathcal{N})$ is finite (resp., finite semisimple). Moreover, the category $\mathcal{C}_{\mathcal{M}}^*$  is a finite tensor (resp., fusion) category when $\mathcal{C}$ is a finite tensor (resp., fusion) category and $\mathcal{M}$ is indecomposable. \qed
\end{theorem}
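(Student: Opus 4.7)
The plan is to reduce everything to the algebra-and-bimodule picture inside $\C$. By the internal-Hom representation theorem for exact module categories \cite[Theorem~7.10.1]{EGNO}, any exact $\C$-module category $\M$ is equivalent to $\Mod_\C(A)$ for some exact algebra $A \in \C$, obtained as $A = \underline{\End}(M)$ for a generator $M$, and similarly $\N \simeq \Mod_\C(B)$. Under this equivalence, $\C$-module functors $\M \to \N$ are identified with $(A,B)$-bimodules in $\C$ via an Eilenberg--Watts-style correspondence: a bimodule $X$ gives the right exact functor $- \otimes_A X$, while conversely a module functor $F$ is sent to the internal Hom $\underline{\Hom}(M, F(M))$, which carries a natural $(A, B)$-bimodule structure. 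My first step would be to make this equivalence $\Fun_\C(\M,\N) \simeq {}_A\Bimod_B(\C)$ precise.

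Granted this, finiteness of $\Fun_\C(\M, \N)$ follows because $(A,B)$-bimodules in a finite category $\C$ form a finite abelian category in their own right: they are modules over the algebra $A \otimes B^{\mathsf{op}}$ in $\C$, which inherits exactness (hence there are enough projectives) from $A$ and $B$. In the multi-fusion case, $A$ and $B$ are separable/semisimple algebras in a semisimple category, and one checks that bimodules over such algebras are again semisimple, yielding that $\Fun_\C(\M,\N)$ is finite semisimple. This settles the first assertion.

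For the second assertion, one takes $\N = \M$ and endows $\C_\M^* = \Fun_\C(\M,\M)$ with the monoidal structure given by composition, with unit object $\id_\M$; the associativity and unit constraints are inherited from the underlying $2$-category of $\C$-module categories. Under the bimodule identification this corresponds to tensor product of $(A,A)$-bimodules over $A$, so $\C_\M^*$ is a finite (semisimple, in the fusion case) abelian monoidal category by the first part. To promote this to a tensor (resp.\ fusion) category when $\M$ is indecomposable, two things must be checked: the unit $\id_\M$ is simple, and every object has left and right duals. Simplicity of the unit amounts to $\End_{\C_\M^*}(\id_\M) = \kk$; this follows by a Schur-type argument since a nonscalar module natural endomorphism of $\id_\M$ would produce a splitting of $\M$ into $\C$-submodule categories (using Lemma~\ref{lem:idem} applied to the resulting idempotent decomposition), contradicting indecomposability. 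Rigidity is obtained by showing that each $F \in \C_\M^*$ admits two-sided adjoints in $\C_\M^*$: because $\M$ is exact, $F$ is exact and has $\kk$-linear left and right adjoints on the underlying categories, and one then transfers the $\C$-module structure to these adjoints using the rigidity of $\C$ (twisting by $X^*$ and ${}^*X$).

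The main obstacle will be verifying that the adjoints constructed at the level of underlying abelian categories genuinely lift to $\C$-module functors, since this is precisely where exactness of $\M$ and rigidity of $\C$ interact; conceptually, one writes the $\C$-action on the right adjoint $F^{\mathrm{ra}}$ via the natural isomorphism $\Hom_\M(F(X \otimes M), N) \cong \Hom_\M(X \otimes M, F^{\mathrm{ra}}(N)) \cong \Hom_\M(M, X^* \otimes F^{\mathrm{ra}}(N))$, and uses Yoneda to produce the coherence isomorphism $F^{\mathrm{ra}}(X \otimes N) \cong X \otimes F^{\mathrm{ra}}(N)$, then checks the pentagon. A secondary subtlety is confirming that simplicity of the unit really pins down $\C_\M^*$ as a tensor (not merely multi-tensor) category; this is where indecomposability of $\M$ enters decisively.
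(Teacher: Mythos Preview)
The paper does not give its own proof of this theorem: it is stated with citations to \cite[Sections~7.11, 7.12]{EGNO} and \cite[Theorems~2.15, 2.18]{ENO} and closed immediately with \qed. Your outline is essentially the strategy carried out in those references (pass to algebras via internal Hom, identify $\Fun_\C(\M,\N)$ with a bimodule category, and then check rigidity and simplicity of the unit).

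One technical slip is worth flagging. You write that $(A,B)$-bimodules in $\C$ are ``modules over the algebra $A \otimes B^{\mathsf{op}}$ in $\C$.'' In a non-braided multi-tensor category there is no way to endow $A \otimes B^{\mathsf{op}}$ with an algebra structure in $\C$, since the multiplication would require a braiding to swap the middle factors. The argument in \cite{EGNO} instead treats $(A,B)$-bimodules directly (or, equivalently, realizes $\C$ as modules over a finite-dimensional $\kk$-algebra so that bimodules in $\C$ become honest modules over a finite-dimensional algebra), thereby obtaining finiteness and enough projectives without ever forming $A \otimes B^{\mathsf{op}}$. The remainder of your sketch---adjoints of exact module functors lifting to $\C$-module functors via rigidity of $\C$, and simplicity of $\id_\M$ from indecomposability of $\M$---matches the standard treatment.
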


Two (multi-)tensor categories $\mathcal{C}$ and $\mathcal{D}$ are {\it categorically Morita equivalent} if there exists an exact $\mathcal{C}$-module category $\mathcal{M}$ so that $\mathcal{D}^{\rm op}$ is tensor equivalent to $\mathcal{C}_{\mathcal{M}}^*$.  The following result shows that there is a bijection between module categories over $\mathcal{C}$ and those over the dual $\mathcal{C}_{\mathcal{M}}^*$ with respect to a faithful, exact module category $\mathcal{M}$. 

\begin{theorem} \label{thm:2categ} \cite[Theorem~7.12.16]{EGNO}
Let $\mathcal{M}$ be a faithful, exact  module category over a multi-tensor category $\mathcal{C}$. Then the 2-functor ${\sf Mod}(\mathcal{C}) \to {\sf Mod}((\mathcal{C}^*_\mathcal{M})^{\rm op})$ given by $\mathcal{N} \mapsto {\sf Fun}_{\mathcal{C}}(\mathcal{M}, \mathcal{N})$ is a 2-equivalence. \qed
\end{theorem}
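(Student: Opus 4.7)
The plan is to construct an explicit candidate for the inverse 2-functor and to check that the two compositions are naturally 2-equivalent to identity 2-functors. First I would make the target module structure concrete: pre-composition by $\C$-module endofunctors of $\M$ endows $\Fun_\C(\M,\N)$ with a right $\C^*_\M$-action, equivalently a left $(\C^*_\M)^{\rm op}$-action, and the assignment $\N \mapsto \Fun_\C(\M,\N)$ is 2-functorial via post-composition, which preserves right exactness and the $\C^*_\M$-module structure.

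For the candidate inverse, I would exploit that $\M$ is naturally a $(\C, \C^*_\M)$-bimodule category: the right action sends $(M, F) \mapsto F(M)$ for $M \in \M$ and $F \in \C^*_\M$, and the bimodule associativity is built into the definition of a $\C$-module functor. I would then define $\Phi \colon \Mod((\C^*_\M)^{\rm op}) \to \Mod(\C)$ by $\mathcal{P} \mapsto \Fun_{(\C^*_\M)^{\rm op}}(\M, \mathcal{P})$, with the left $\C$-action induced by pre-composition using the $\C$-action on $\M$. The 2-functoriality of $\Phi$ is again routine.

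The heart of the argument is to produce natural equivalences
\[
\Fun_{(\C^*_\M)^{\rm op}}(\M, \Fun_\C(\M, \N)) \simeq \N \quad \text{and} \quad \Fun_\C(\M, \Fun_{(\C^*_\M)^{\rm op}}(\M, \mathcal{P})) \simeq \mathcal{P}.
\]
In each direction there is a canonical evaluation-type candidate, built from the internal Hom $\underline{\Hom}_\M(-,-) \in \C$, which exists because $\M$ is exact. The main obstacle is verifying that these candidates really are equivalences; this is a categorified Eilenberg--Watts / Morita theorem. Faithfulness of $\M$ provides the generation property that makes the evaluation functors essentially surjective and conservative, while exactness controls internal Homs and relative Deligne tensor products, so that both objects and 1-morphisms are matched. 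The crucial special case is $\N = \M$, giving the double-dual identity $(\C^*_\M)^*_\M \simeq \C$ as tensor categories (cf.\ Theorem~\ref{thm:Fun-ss}); the general case then follows by writing an arbitrary $\N$ (resp.\ $\mathcal{P}$) as generated from $\M$ under the respective dual actions and transporting the $\N = \M$ equivalence along these generation diagrams.
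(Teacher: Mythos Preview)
The paper does not prove this statement: it is quoted as background from \cite[Theorem~7.12.16]{EGNO} and is accompanied only by a \texttt{\textbackslash qed} symbol. So there is no proof in the paper to compare your proposal against.

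That said, your outline is the standard one and is essentially the argument given in \cite{EGNO}: one uses that $\M$ is a $(\C,\C^*_\M)$-bimodule category, constructs the inverse 2-functor $\mathcal{P}\mapsto \Fun_{(\C^*_\M)^{\rm op}}(\M,\mathcal{P})$, and reduces the verification that the evaluation maps are equivalences to the double-dual theorem $(\C^*_\M)^*_\M \simeq \C$. The one point where your sketch is a bit thin is the last sentence: the passage from the special case $\N=\M$ to arbitrary $\N$ is not literally a ``generation'' argument but rather uses that both sides can be identified with a relative Deligne (balanced) tensor product $\N \boxtimes_\C \M$ (or equivalently, one realizes $\N$ as modules over an algebra in $\C$ and transports along the resulting equivalences). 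Your internal-Hom/evaluation language is compatible with this, but you would need to make that identification explicit to close the argument.
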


In other words, there is a natural bijection between exact  module categories over two categorically Morita equivalent multi-tensor  categories.

%%%%%%%%%%%%%%%%%

\subsection{Algebras, ideals, and (bi)modules in finite multi-tensor categories}\label{sec:algebrasetc}
Let  $\mathcal{C}:=(\mathcal{C}, \otimes, a, l, r, {\bf 1})$ be a multi-tensor category. 

\brk An {\it algebra} in $\mathcal{C}$ is a triple $(A,m,u)$, where $A \in \mathcal{C}$, and $m\colon A \otimes A \to A$ (multiplication) and $u\colon {\bf 1} \to A$ (unit) are morphisms in $\C$ that are compatible with the associativity constraint $a$ and the unit constraints $l,r$, respectively. 
 A {\it right module} over an algebra $(A,m,u)$ in $\mathcal{C}$ is a pair $(M, \rho^{rt})$, where $M$ is an object of $\mathcal{C}$ and $\rho^{rt}\colon M\otimes A \to M$ is a morphism in $\C$ that is compatible with the associativity constraint $a$ and the unit constraints $l,r$. A {\it left module} $(M, \rho^{lt})$ over an algebra $(A,m,u)$ is defined likewise. Let ${\sf Mod}_{\mathcal{C}}$-$A$ be the category of right modules over $A$; this is a left module category over $\mathcal{C}$.
Moreover, two algebras $A$ and $B$ in $\mathcal{C}$ are called {\it Morita equivalent} if ${\sf Mod}_{\mathcal{C}}$-$A$ and ${\sf Mod}_{\mathcal{C}}$-$B$ are equivalent as left $\mathcal{C}$-module categories.

\brk We say that an algebra $A$ in $\mathcal{C}$ is {\it semisimple} (resp., {\it indecomposable}, {\it exact}) if the category {\sf Mod}$_{\mathcal{C}}$-$A$ is a semisimple (resp., indecomposable, exact) category.

\begin{theorem} \label{thm:ModcA} \cite[Corollary~7.10.5]{EGNO} \cite[Theorem 3.1]{OstrikTG2003} Given a finite multi-tensor (resp., multi-fusion) category $\mathcal{C}$,
 each exact (resp., finite semisimple) module category $\mathcal{M}$ over $\mathcal{C}$ is equivalent to {\sf Mod}$_{\mathcal{C}}$-$A$ for some exact (resp., semisimple) algebra $A \in \mathcal{C}$. \qed
\end{theorem}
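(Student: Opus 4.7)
The plan is to produce the algebra $A$ as an internal endomorphism object (internal Hom) of a suitably chosen generator $M \in \mathcal{M}$, and then show that the functor $\underline{\Hom}(M, -) \colon \mathcal{M} \to {\sf Mod}_{\mathcal{C}}\text{-}A$ is an equivalence of left $\mathcal{C}$-module categories.

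First I would set up the internal Hom. Given a $\mathcal{C}$-module category $\mathcal{M}$ and objects $M, N \in \mathcal{M}$, the \emph{internal Hom} $\underline{\Hom}(M, N)$ is, when it exists, the object of $\mathcal{C}$ (or more generally of ${\sf Ind}(\mathcal{C})$) representing the functor $\mathcal{C}^{\rm op} \to \sVec$ given by $X \mapsto \Hom_{\mathcal{M}}(X \otimes M, N)$. Using that $\mathcal{C}$ is finite (so one may build this representing object from the finitely many simples of $\mathcal{C}$ and the finite-dimensionality of Hom spaces), internal Homs exist in ${\sf Ind}(\mathcal{C})$, and composition of morphisms in $\mathcal{M}$ endows $A := \underline{\Hom}(M, M)$ with the structure of an algebra in (${\sf Ind}$ of) $\mathcal{C}$. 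For any $N \in \mathcal{M}$, the object $\underline{\Hom}(M, N)$ becomes a right $A$-module via precomposition, yielding a $\mathcal{C}$-module functor $\Phi_M := \underline{\Hom}(M, -) \colon \mathcal{M} \to {\sf Mod}_{\mathcal{C}}\text{-}A$.

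Next I would choose $M$ carefully so that $\Phi_M$ is an equivalence and $A$ actually lives in $\mathcal{C}$. The right choice is a \emph{projective generator}: an object $M \in \mathcal{M}$ such that (i) the functor $\Hom_{\mathcal{M}}(M, -)$ is exact, and (ii) every object of $\mathcal{M}$ is a quotient of $X \otimes M$ for some $X \in \mathcal{C}$. In the semisimple (multi-fusion) case, one simply takes $M$ to be a direct sum of one representative from each isomorphism class of simple objects of $\mathcal{M}$, which is finite by Theorem~\ref{thm:Fun-ss} applied to $\mathcal{M} = {\sf Fun}_{\mathcal{C}}(\mathcal{C}, \mathcal{M})$ (or directly from finiteness of $\mathcal{M}$). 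In the exact case, one uses that finite multi-tensor categories have enough projectives and that for exact module categories, $P \otimes X$ is projective whenever $P \in \mathcal{C}$ is projective, to build a projective generator by summing projective covers of the simples of $\mathcal{M}$. With $M$ projective, $\Hom_{\mathcal{M}}(X \otimes M, -)$ is exact in both variables, which forces $\underline{\Hom}(M, -)$ to be exact and $A = \underline{\Hom}(M, M)$ to lie in $\mathcal{C}$ itself (not merely its ind-completion), via the fact that exact representable functors from $\mathcal{C}$ to $\sVec$ are represented by objects of $\mathcal{C}$.

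To see that $\Phi_M$ is an equivalence, I would construct an explicit quasi-inverse $\Psi_M \colon {\sf Mod}_{\mathcal{C}}\text{-}A \to \mathcal{M}$ sending a right $A$-module $(V, \rho)$ to the coequalizer of the two natural maps $V \otimes A \otimes M \rightrightarrows V \otimes M$ coming from $\rho$ and from the coevaluation $A \otimes M \to M$. Using (ii) above (generation) one checks $\Phi_M \Psi_M \simeq \id$, and using (i) (projectivity) one checks $\Psi_M \Phi_M \simeq \id$; both verifications reduce to the defining adjunction of the internal Hom. Finally, $A$ is exact (resp.\ semisimple) as an algebra in $\mathcal{C}$ because, via $\Phi_M$, the category ${\sf Mod}_{\mathcal{C}}\text{-}A$ is equivalent to $\mathcal{M}$, which is exact (resp.\ semisimple) by hypothesis.

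The main obstacle is the careful verification that $A = \underline{\Hom}(M, M)$ lands in $\mathcal{C}$ rather than merely in ${\sf Ind}(\mathcal{C})$, and that $\Phi_M$ is an equivalence; both hinge on the existence of a projective generator $M$, which in turn relies crucially on exactness of $\mathcal{M}$ over $\mathcal{C}$. In the semisimple setting this step is easy, but in the exact case it requires invoking the nontrivial fact that $P \otimes X$ is projective in $\mathcal{M}$ for $P$ projective in $\mathcal{C}$, together with enough projectives in $\mathcal{C}$.
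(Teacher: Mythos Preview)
The paper does not supply its own proof of this statement: the theorem is stated with a terminal \qed and attributed to \cite[Corollary~7.10.5]{EGNO} and \cite[Theorem~3.1]{OstrikTG2003}. Your proposal is precisely the argument found in those references---realize $A$ as the internal endomorphism algebra $\underline{\sf End}(M)$ of a (projective) generator $M\in\mathcal{M}$ and show that $\underline{\Hom}(M,-)$ gives the desired equivalence---so it is correct and aligned with the intended proof.

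A couple of minor points of precision. First, the map you call ``coevaluation $A\otimes M\to M$'' is the evaluation (action) map coming from the counit of the internal-Hom adjunction; the coevaluation goes the other way. Second, your appeal to Theorem~\ref{thm:Fun-ss} to deduce finiteness of $\mathcal{M}$ in the semisimple case is unnecessary (and slightly circular in spirit): the hypothesis already says $\mathcal{M}$ is finite semisimple, so a direct sum over $\Irr(\mathcal{M})$ is available immediately. Finally, in the exact case you correctly identify the crux---that $\mathcal{M}$ has a projective generator---but note that the cleanest route is to take $M=P\otimes N$ for $P$ a projective generator of $\mathcal{C}$ and any nonzero $N\in\mathcal{M}$, rather than summing projective covers (which presupposes enough projectives in $\mathcal{M}$, a fact that itself requires exactness).
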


\begin{example} \label{ex:S=1} \cite[Examples~7.8.4,~7.8.11,~7.8.18,~7.10.2]{EGNO}
Consider the algebra $A = {\bf 1}$ in $\mathcal{C}$. Since {\sf Mod}$_\mathcal{C}$-${\bf 1}$ is equivalent to $\mathcal{C}$ as $\mathcal{C}$-module categories, we have that the algebra ${\bf 1}$ is exact (resp., semisimple) when $\mathcal{C}$ is finite (resp., finite semisimple),  and it is indecomposable when $\C$ is a tensor category. In this case, the algebra {\bf 1} is Morita equivalent to the algebra $X \otimes X^*$ for any nonzero $X \in \mathcal{C}$, where the latter has multiplication $m = {\sf id}_X \otimes {\sf ev}_X \otimes {\sf id}_{X^*}$ and unit $u = {\sf coev}_X$. Thus, {\sf Mod}$_\mathcal{C}$-$(X \otimes X^*)$ is equivalent to $\mathcal{C}$ as $\mathcal{C}$-module categories as well, for any $X \in \mathcal{C}$.
\end{example}

For instance, if $\mathcal{C} = {\sf Vec}$ in the example above, then ${\bf 1} = \kk$. Moreover, for $X$ an $n$-dimensional vector space, the algebra $X \otimes X^*$ is isomorphic to the matrix algebra ${\sf Mat}_n(\kk)$, which is well-known to be Morita equivalent to $\Bbbk$ in ${\sf Vec}$.

\brk Now we discuss bimodules in $\C$. Let $A,B$ be two algebras in $\mathcal{C}$. An $(A,B)$-{\it bimodule} in $\mathcal{C}$ is a triple $(M, \rho^{lt}, \rho^{rt})$, where $M \in \mathcal{C}$ and $\rho^{lt}\colon A \otimes M\to M$ and $\rho^{rt}\colon M\otimes B \to M$ are morphisms in $\mathcal{C}$ so that $(M, \rho^{lt})$ is a left $A$-module in $\mathcal{C}$, $(M, \rho^{rt})$ is a right $B$-module in $\mathcal{C}$, and $\rho^{lt} \circ (\text{id}_A \otimes \rho^{rt}) \circ a_{A,M,B} = \rho^{rt} \circ (\rho^{lt} \otimes \text{id}_B)$ as morphisms from $A \otimes M \otimes B$ to $M$ in $\C$. 
We write ${\sf Bimod}_\mathcal{C}(A,B)$ (resp., {\sf Bimod}$_{\mathcal{C}}(A)$) for the category of $(A,B)$-bimodules (resp., $(A,A)$-bimodules) in $\mathcal{C}$.

\brk Take two $\C$-module categories $\mathcal{M} \sim {\sf Mod}_\C$-$A$ and $\mathcal{N} \sim {\sf Mod}_\C$-$B$. Then, ${\sf Bimod}_\C(A,B)$ is equivalent to ${\sf Fun}_\C(\mathcal{M}, \mathcal{N})$ \cite[Proposition~7.11.1]{EGNO}.  Moreover, $\mathcal{C}_{\mathcal{M}}^*$ is tensor equivalent to ${\sf Bimod}_{\mathcal{C}}(A)^{\rm op}$ \cite[Remark~7.12.5]{EGNO}.  We also have by Theorem~\ref{thm:Fun-ss} the  result below.

\begin{proposition} \label{prop:bimod} \cite{ENO}
Given an exact (resp., semisimple) algebra $A$ in a finite multi-tensor (resp., multi-fusion) category $\mathcal{C}$, we have that ${\sf Bimod}_{\mathcal{C}}(A)$ is a finite multi-tensor (resp., multi-fusion) category with unit object $A$, and it is a finite tensor (resp., fusion) category when $\mathcal{C}$ is a finite tensor (resp., fusion) category and $A$ is indecomposable. \qed
\end{proposition}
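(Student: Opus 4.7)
The plan is to transfer the multi-tensor structure on the dual category $\C_\M^*$ to ${\sf Bimod}_\C(A)$ via the tensor equivalence $\C_\M^* \simeq {\sf Bimod}_\C(A)^{\rm op}$ recalled from \cite[Remark~7.12.5]{EGNO} in the paragraph just before the proposition.

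First, I would set $\M := {\sf Mod}_\C\text{-}A$; by the definitions given in Section~\ref{sec:algebrasetc}, the hypothesis that $A$ is exact (resp.\ semisimple) is precisely the statement that $\M$ is an exact (resp.\ finite semisimple) left $\C$-module category, and indecomposability of $A$ corresponds to indecomposability of $\M$. Applying Theorem~\ref{thm:Fun-ss} (together with its standard multi-tensor version recorded in \cite{ENO}) then yields that $\C_\M^*$ is a finite multi-tensor (resp.\ multi-fusion) category, and moreover a finite tensor (resp.\ fusion) category when $\C$ is tensor (resp.\ fusion) and $\M$ is indecomposable.

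Next I would verify that the opposite of a finite multi-tensor (resp.\ multi-fusion, tensor, fusion) category is again of the same type: the underlying abelian category is unchanged, so finiteness, local finiteness, $\kk$-linearity and semisimplicity persist; the bifunctor $X \otimes^{\rm op} Y := Y \otimes X$ inherits biexactness from $\otimes$; and rigidity is preserved because left and right duals simply exchange roles under reversal of the tensor product. Combining this observation with the tensor equivalence ${\sf Bimod}_\C(A) \simeq (\C_\M^*)^{\rm op}$ yields all of the categorical assertions in the statement.

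Finally, to identify the unit object, I would invoke the explicit form of the equivalence ${\sf Fun}_\C(\M,\M) \simeq {\sf Bimod}_\C(A)$ from \cite[Proposition~7.11.1]{EGNO}, under which an $A$-bimodule $M$ is sent to the functor $(-) \otimes_A M \colon \M \to \M$. Since $(-) \otimes_A A \cong {\rm id}_\M$, and ${\rm id}_\M$ is the unit of $\C_\M^*$ (hence also of $(\C_\M^*)^{\rm op}$), the regular bimodule $A$ corresponds to the unit of ${\sf Bimod}_\C(A)$. The only delicate point is a mild bookkeeping nuisance in tracking the op-convention when relating $\otimes_A$ on bimodules to composition in $\C_\M^*$, but this does not affect the unit since the unit object of a monoidal category coincides with that of its opposite.
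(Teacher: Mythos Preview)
Your proposal is correct and follows essentially the same route as the paper: the paper does not give an independent proof but simply cites \cite{ENO} and the sentence ``We also have by Theorem~\ref{thm:Fun-ss} the result below,'' relying on the equivalence $\C_\M^* \simeq {\sf Bimod}_\C(A)^{\rm op}$ recalled in the preceding paragraph. You have merely unpacked that one-line justification, including the observations about passing to the opposite monoidal category and identifying the unit, so there is nothing substantively different between your argument and the paper's.
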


We get the following consequence.

\begin{corollary} \label{cor:Adecomp}
Given an exact (resp., semisimple) algebra $A$ in a finite multi-tensor category (resp., multi-fusion) $\mathcal{C}$, we can uniquely decompose the algebra $A \in {\sf Bimod}_\C(A)$ into a direct sum of indecomposable subalgebras $\{A_i\}_{i}$ in ${\sf Bimod}_\C(A)$, with $A_i$ being pairwise non-isomorphic $A$-bimodules and $A_i \otimes_{A} A_j \cong \delta_{ij} A_i$. 
\end{corollary}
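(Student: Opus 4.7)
The plan is to apply Lemma~\ref{lem:idem} directly to the multi-tensor category $\mathsf{Bimod}_{\C}(A)$, since the statement of the corollary is exactly the conclusion of that lemma applied in a different ambient category.

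First, I would invoke Proposition~\ref{prop:bimod}: because $A$ is an exact (resp.\ semisimple) algebra in the finite multi-tensor (resp.\ multi-fusion) category $\mathcal{C}$, the category $\mathsf{Bimod}_{\C}(A)$ is a finite multi-tensor (resp.\ multi-fusion) category whose unit object is $A$ itself, with tensor product $\otimes_A$. Thus Lemma~\ref{lem:idem} applies with $\C$ replaced by $\mathsf{Bimod}_{\C}(A)$: the unit object admits a decomposition
\[
A \;=\; \bigoplus_{i} A_i
\]
in $\mathsf{Bimod}_{\C}(A)$, where the $A_i$ are pairwise non-isomorphic simple $A$-bimodules satisfying $A_i \otimes_A A_j \cong \delta_{ij} A_i$, and the collection $\{A_i\}$ is uniquely determined as a set of subobjects of $A$ up to reordering.

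Next I would explain why each summand $A_i$ is an indecomposable subalgebra of $A$. The relation $A_i \otimes_A A_i \cong A_i$ gives a multiplication morphism $m_i \colon A_i \otimes_A A_i \to A_i$, which is obtained by composing the inclusion $A_i \hookrightarrow A$ with $m_A$ and projecting onto $A_i$; the identity $A_i \otimes_A A_j \cong 0$ for $i \neq j$ ensures that $m_A$ restricted to $A_i \otimes_A A_i$ lands in $A_i$, so $A_i$ is closed under multiplication. By Lemma~\ref{lem:idem} applied in $\mathsf{Bimod}_\C(A)$, the unit $\mathbf{1}_i$ of this summand is $A_i$ itself (it is simple and satisfies $A_i \otimes_A A_i \cong A_i$), so $A_i$ is unital. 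Simplicity of $A_i$ as an $A$-bimodule implies indecomposability as an $A$-bimodule, hence as an algebra in $\C$.

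Finally, uniqueness of the decomposition is inherited from the uniqueness clause in Lemma~\ref{lem:idem}, applied inside $\mathsf{Bimod}_\C(A)$. There is essentially no obstacle here: the only conceptual point is recognizing that the corollary is a direct translation of Lemma~\ref{lem:idem} from $\C$ to $\mathsf{Bimod}_\C(A)$, and the content of Proposition~\ref{prop:bimod} is what makes this translation legitimate.
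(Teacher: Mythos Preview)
Your proposal is correct and follows essentially the same route as the paper: view $A$ as the unit object of $\mathsf{Bimod}_\C(A)$, use Proposition~\ref{prop:bimod} to verify this is a finite multi-tensor (resp.\ multi-fusion) category, and then apply Lemma~\ref{lem:idem}. You add a careful justification that each $A_i$ is an indecomposable subalgebra, which the paper's proof omits but is consistent with the discussion that follows the corollary.
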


\begin{proof}
Consider $A$ as ${\bf 1}_{{\sf Bimod}_\mathcal{C}(A)}$. Now by Proposition~\ref{prop:bimod} and Lemma~\ref{lem:idem}, we have that $A$ is uniquely a direct sum of indecomposable, pairwise non-isomorphic objects $A_i$ in ${\sf Bimod}_\mathcal{C}(A)$ so that $A_i \otimes_{A} A_j \cong \delta_{ij} A_i$. 
\end{proof}

\brk Finally,  an {\it ideal} $I$ of an algebra $A \in \mathcal{C}$ is an $A$-$A$-sub-bimodule of $A$ in $\C$, and with this one can form the quotient algebra $A/I$ in $\C$. Moreover, an ideal $I$ of $A \in \mathcal{C}$ can be realized as a subalgebra of $A \in {\sf Bimod}_\mathcal{C}(A)$ if $I$ is a direct summand of $A$ in  ${\sf Bimod}_\mathcal{C}(A)$; in this case, the unit $A \to I$ is given by projection onto $I$.

%%%%%%%%%%%%%%%%%%%%%%%%%%%%%%%%%%%%%%
%%%%%%%%%%% %%%%%%%%%%%%%%%%%%%%%%%%%%%
%%%%%%%%%%%%%%%%%%%%%%%%%%%%%%%%%%%%%%

\section{Main results} \label{sec:main}

Before providing a tensor-categorical framework for studying finite quantum symmetries of tensor algebras, we review the connection to quivers that motivated this work.

\brk Recall that a {\it quiver} is another name for a directed graph, in the context where the directed graph is used to define an algebra.  Formally, a (finite) quiver $Q=(Q_0, Q_1, s, t)$ consists of a (finite) set of {\it vertices} $Q_0$, a (finite) set of {\it arrows} $Q_1$, and two functions $s, t\colon Q_1 \to Q_0$ giving the {\it source} and {\it target} of each arrow.
We assume all quivers in this paper are finite.
 One can construct a {\it path algebra} $\kk Q$ from a quiver $Q$ which is a $\kk$-algebra whose basis consists of all paths in $Q$, with multiplication of basis elements given by concatenation of paths whenever it is defined and 0 otherwise.  Such an algebra is naturally graded by path length. 
Moreover, the path algebra $\kk Q_0$ is taken to be the path algebra of the quiver $(Q_0, \emptyset)$ with no arrows; thus, $\kk Q_0$ is a commutative semisimple (so, exact) $\kk$-algebra. 

\brk 
Path algebras arise as a special case of the following construction.
Given a finite dimensional semisimple $\kk$-algebra $B$, and a $B$-bimodule $V$, we can construct the {\it tensor algebra} $T_B(V) = \bigoplus_{i \geq 0} V^{\otimes_B i}$, where \red{$V^{\otimes_B 0} = B$}.
It is an object of ${\sf Bimod}_{\bVec}(B)$ which admits an $\mathbb{N}$-grading via $T_B(V)_n = V^{\otimes_B n}$ for $n \in \mathbb{N}$.
When $B$ and $V$ both lie in $\sVec$, the tensor algebra $T_B(V)$ is {\it finitely generated as a $\kk$-algebra} and it is then an object of ${\sf Ind}({\sf Bimod}_{{\sf Vec}}(B))$.  Observe that the path algebra $\kk Q$ is isomorphic to the finitely generated tensor algebra $T_{\kk Q_0}(\kk Q_1)$, since $\kk Q_1$ is naturally a $\kk Q_0$-bimodule. 
Conversely, we have the following classical theorem.
\begin{theorem}[P. Gabriel, see \textnormal{\cite[Ch.~2]{ASS2006}}] \label{thm:eqKQ}
Every finitely generated tensor algebra $T_B(V)$ as above is Morita equivalent to the path algebra of a quiver. \qed
\end{theorem}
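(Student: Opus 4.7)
The plan is to reduce to the case where the base algebra is basic (a product of copies of $\kk$), and then read off a quiver directly from the bimodule structure.

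First, since $\kk$ is algebraically closed and $B$ is finite-dimensional semisimple, the Artin--Wedderburn theorem gives $B \cong \prod_{i=1}^n M_{d_i}(\kk)$. The basic algebra $B' := \kk^n$ is Morita equivalent to $B$ via a standard $(B,B')$-progenerator bimodule $P$ with dual $P^\vee = \Hom_B(P,B)$ satisfying $P \otimes_{B'} P^\vee \cong B$ and $P^\vee \otimes_B P \cong B'$. Set $V' := P^\vee \otimes_B V \otimes_B P$, a finite-dimensional $B'$-bimodule. The functor $M \mapsto P^\vee \otimes_B M \otimes_B P$ is a monoidal equivalence ${\sf Bimod}_{\bVec}(B) \to {\sf Bimod}_{\bVec}(B')$, and so, on ind-completions and on initial algebra objects, it sends $T_B(V)$ to $T_{B'}(V')$. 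I would then upgrade this to a Morita equivalence of the underlying $\kk$-algebras by exhibiting $T_B(V) \otimes_B P$ as a $(T_B(V), T_{B'}(V'))$-progenerator bimodule, with right $T_{B'}(V')$-action built from the isomorphisms $V^{\otimes_B n} \otimes_B P \otimes_{B'} V' \cong V^{\otimes_B (n+1)} \otimes_B P$.

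Next, let $e_1, \ldots, e_n$ be the primitive orthogonal idempotents of $B' = \kk^n$. The bimodule $V'$ decomposes canonically as $V' = \bigoplus_{i,j} V'_{ji}$ with $V'_{ji} := e_j V' e_i$ of some finite dimension $a_{ji}$. Define a quiver $Q$ with vertex set $Q_0 = \{1, \ldots, n\}$ and with $a_{ji}$ arrows from $i$ to $j$. Under the obvious identification $\kk Q_0 \cong B'$ sending trivial paths to idempotents, we have $\kk Q_1 \cong V'$ as $\kk Q_0$-bimodules, so $\kk Q \cong T_{\kk Q_0}(\kk Q_1) \cong T_{B'}(V')$ as graded $\kk$-algebras, which is Morita equivalent to $T_B(V)$ by the previous step.

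The main obstacle will be upgrading the categorical monoidal equivalence of bimodule categories (which matches $T_B(V)$ with $T_{B'}(V')$ as algebra objects in an ind-completion) into a Morita equivalence of the underlying infinite-dimensional graded $\kk$-algebras. One must verify that the base-level progenerator $P$ extends to a grading-compatible progenerator for the tensor algebras; this is essentially formal once $T_B(V) \otimes_B P$ is written down, but requires some bookkeeping with the $\mathbb{N}$-grading and the ind-structure.
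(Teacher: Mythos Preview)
The paper does not actually prove this theorem; it is stated with a citation to \cite[Ch.~2]{ASS2006} and closed with a \qed, treating it as a classical result of Gabriel. So there is no paper proof to compare against.

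Your argument is the standard one and is correct. The reduction to a basic base algebra $B'=\kk^n$ via Artin--Wedderburn and Morita theory, followed by reading the quiver off the idempotent decomposition $V'=\bigoplus_{i,j} e_j V' e_i$, is exactly how this is done in the reference cited. The one point you flag as an obstacle---upgrading the monoidal equivalence $\Bimod_{\bVec}(B)\simeq\Bimod_{\bVec}(B')$ to a Morita equivalence of the $\kk$-algebras $T_B(V)$ and $T_{B'}(V')$---is real but routine. A cleaner way than building the progenerator by hand is to note that a right $T_B(V)$-module is the same data as a right $B$-module $M$ together with a right $B$-module map $\phi\colon M\otimes_B V\to M$ (this is the universal property of the tensor algebra); the Morita equivalence $\Mod\text{-}B\simeq\Mod\text{-}B'$ given by $-\otimes_B P$ then carries such pairs to pairs $(M',\phi')$ with $M'\in\Mod\text{-}B'$ and $\phi'\colon M'\otimes_{B'}V'\to M'$, i.e.\ to right $T_{B'}(V')$-modules. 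This avoids any bookkeeping with the grading or the ind-structure.
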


We move beyond {\sf Vec} to finite multi-tensor categories as follows.
In Section~\ref{sec:Cmulti}, we discuss tensor algebras $T$ with base algebras and generating bimodules in finite multi-tensor categories $\C$; these are called {\it $\C$-tensor algebras} [Definition~\ref{S,E}].  Our first result is that such $T$ can be decomposed uniquely as a collection of {\it minimal} ones in the sense of Definition~\ref{def:minl} [Proposition~\ref{prop:decomp}]. In Section~\ref{sec:minimal}, we  establish our main result on classifying equivalence classes of $\C$-tensor algebras in Theorem~\ref{thm:param}. Finally in Section~\ref{sec:CHopf} we take $\C = {\sf Rep}(H)$, for $H$ a semisimple Hopf algebra, and show that any $H$-action on a tensor algebra that preserves its ascending filtration must be isomorphic to a grade-preserving action [Proposition~\ref{prop:filtration}]; we then prove a similar result for $H$-actions on completed tensor algebras that preserve the natural descending filtration [Theorem~\ref{thm:complete}].

%%%%%%%%%%%%%%%%%%%%%%%%%%%%%%%%%%%%%%

\subsection{Tensor algebras in finite multi-tensor categories} \label{sec:Cmulti}

Let $\mathcal{C}$ be a finite multi-tensor category. We introduce the notion of a $\C$-tensor algebra as follows.

\begin{definition}[$S$, $E$, $T_S(E)$] \label{S,E}
Fix $S$ an exact  algebra in $\C$, and fix an $S$-bimodule $E$ in ${\sf Ind}(\mathcal{C})$. 

\begin{enumerate}
\item Form the algebra $T_S(E)$ in  ${\sf Ind}(\mathcal{C})$, or more specifically in ${\sf Bimod}_{{\sf Ind}(\mathcal{C})}(S)$, given by 
$$T_S(E) = S \oplus E \oplus (E \otimes_S E) \oplus (E \otimes_S E \otimes_S E) \oplus \cdots,$$
with multiplication morphism given by the natural maps $(E^{\otimes_S n}) \otimes_S (E^{\otimes_S m}) \to E^{\otimes_S (n+m)}$, 
and with unit morphism induced from the unit of $S$ by $S \hookrightarrow T_S(E)$. We call this a {\it $\mathcal{C}$-tensor algebra}. 

\smallskip

\item We refer to $S$ and $E$ as the {\it base algebra} and {\it generating bimodule} of  $T_S(E)$, respectively. 

\smallskip

\item  If $E$ also belongs to $\C$, then we say that $T_S(E)$ is {\it finitely generated (f.g.).}
\end{enumerate}
\end{definition}

From now on, we concentrate on f.g. $\C$-tensor algebras. In this case, note that $E$ has finite length as an $S$-bimodule in $\mathcal{C}$ by Proposition~\ref{prop:bimod}.
Further, $T_S(E)$ admits a natural $\mathbb{N}$-grading with $(T_S(E))_n = E^{\otimes_S n} \in \mathcal{C}$ for $n \in \mathbb{N}$, and is an object of ${\sf Ind}({\sf Bimod}_\C(S))$.

\begin{example}\label{ex:HQ}
Let $Q$ be a  finite quiver and $\kk Q$ be its path algebra.  Suppose we have an action of a finite-dimensional Hopf algebra $H$ on the algebra $\kk Q$ preserving the grading by path length.
Taking $\C=\Rep(H)$, the setup above makes $S=\kk Q_0$ an exact algebra in $\C$ and $E=\kk Q_1$ an $S$-bimodule in $\C$. The identification $\kk Q \cong T_S(E)$ described above for $\kk$-algebras is an isomorphism of graded algebras in ${\sf Ind}(\C)$.  
For example, this could arise from a finite group $G$ acting by directed graph automorphisms of $Q$ and by extending linearly to a $\kk G$-action on $\kk Q$. 
For $H = \kk G$, see related works of Reiten-Riedtmann \cite{RR} and of Demonet \cite{Demonet}. 
\end{example}

Next, we introduce the notion of equivalence for $\C$-tensor algebras.
Consider $\C$-tensor algebras $T_S(E)$ and $T_{S'}(E')$ and set $\mathcal{M}:= {\sf Mod}_\C\text{-}S$ and $\mathcal{M}':= {\sf Mod}_\C$-$S'$.

\begin{definition} \label{def:equiv}
An \emph{equivalence} from $T_S(E)$ to $T_{S'}(E')$ is a pair $(F, \xi)$ where 
\begin{itemize}
\item $F\colon \mathcal{M} \to \mathcal{M}'$ is an equivalence of $\C$-module categories, realized by $F(M) = M \otimes_S X$ where \linebreak $X=F(S) \in \Bimod_\C(S, S')$;
\item $\xi \colon \bar{X} \otimes_S E \otimes_S X \xto{\sim} E'$ is an isomorphism of $S'$-bimodules, where $\bar{X}=F^{-1}(S') \in \Bimod_\C(S', S)$ realizes the $\C$-module functor $F^{-1}$.
\end{itemize}
\end{definition}

Note that any $\C$-module equivalence $F: \mathcal{M} \to \mathcal{M}'$ defines a tensor equivalence $\widehat{F}: \C_\mathcal{M}^* \to \C_{\mathcal{M}'}^*$, and $\xi$ is just an isomorphism $\widehat{F}(E) \xto{\sim} E'$. On the other hand, a simple example illustrating equivalence of tensor algebras defined by nonisomorphic generating bimodules is found in Example \ref{ex:quiverminimal}, and more subtle examples throughout Section \ref{sec:VecGw}. Next, we show that equivalence of tensor algebras is sufficient to imply Morita equivalence as algebras in~$\C$.

\begin{proposition}
An equivalence $(F,\xi)$ as in Definition \ref{def:equiv} induces an equivalence of $\C$-module categories from $\Mod_\C$-$T_S(E)$ to $\Mod_\C$-$T_{S'}(E')$.
\end{proposition}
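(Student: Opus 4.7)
The plan is to use the invertibility of the $(S,S')$-bimodule $X=F(S)$, which is automatic from $F$ being a $\C$-module equivalence with quasi-inverse realized by $\bar X$, to transport right $T_S(E)$-modules to right $T_{S'}(E')$-modules via tensoring with $X$. Concretely, I would first unpack that $F$ being an equivalence yields natural bimodule isomorphisms $\bar X \otimes_S X \cong S'$ and $X \otimes_{S'} \bar X \cong S$; then, combining these with $\xi$, inductively produce $S'$-bimodule isomorphisms $\xi_n \colon \bar X \otimes_S E^{\otimes_S n} \otimes_S X \xto{\sim} (E')^{\otimes_{S'} n}$ for all $n \geq 0$, and assemble them into an isomorphism of graded algebras $\tilde\xi \colon \bar X \otimes_S T_S(E) \otimes_S X \xto{\sim} T_{S'}(E')$ in $\Bimod_{{\sf Ind}(\C)}(S')$.

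Once $\tilde\xi$ is established, I would define the functor $G \colon \Mod_\C\text{-}T_S(E) \to \Mod_\C\text{-}T_{S'}(E')$ by $G(N) = N \otimes_S X$ on objects and $\phi \mapsto \phi \otimes_S \id_X$ on morphisms, and equip $N \otimes_S X$ with a right $T_{S'}(E')$-action given by the composition
\[
(N \otimes_S X) \otimes_{S'} T_{S'}(E') \xto{\id \otimes \tilde\xi^{-1}} N \otimes_S (X \otimes_{S'} \bar X) \otimes_S T_S(E) \otimes_S X \cong N \otimes_S T_S(E) \otimes_S X \to N \otimes_S X,
\]
where the middle isomorphism uses $X \otimes_{S'} \bar X \cong S$ and the final arrow is the given $T_S(E)$-action on $N$. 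The $\C$-module-functor property of $G$ is then inherited for free, since the $\C$-action on $N \otimes_S X$ lies on the $N$-factor and $F$ is a $\C$-module functor by hypothesis.

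A symmetric construction produces a candidate quasi-inverse $G' \colon \Mod_\C\text{-}T_{S'}(E') \to \Mod_\C\text{-}T_S(E)$, $N' \mapsto N' \otimes_{S'} \bar X$, with action built from $\tilde\xi$ in place of $\tilde\xi^{-1}$. The invertibility isomorphisms $\bar X \otimes_S X \cong S'$ and $X \otimes_{S'} \bar X \cong S$ then produce natural $\C$-module isomorphisms $G' \circ G \cong \id$ and $G \circ G' \cong \id$, delivering the desired equivalence.

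The main obstacle I anticipate is coherence bookkeeping: verifying that the inductive $\xi_n$ are compatible with the concatenation multiplications so that $\tilde\xi$ is genuinely an algebra map, and that the induced $T_{S'}(E')$-action on $G(N)$ is associative and unital. Both reduce to diagram chases using naturality of the invertibility isomorphisms together with the pentagon axiom in $\C$. A minor technical point is the passage to ${\sf Ind}(\C)$ when $T_S(E)$ is infinite-dimensional, but this is routine since $\tilde\xi$ is defined graded piece by graded piece on objects already lying in $\C$.
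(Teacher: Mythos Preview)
Your proposal is correct and lands on the same functor $G(N)=N\otimes_S X$ as the paper, but you take a longer route to equip $G(N)$ with its $T_{S'}(E')$-action. You build a full graded algebra isomorphism $\tilde\xi\colon \bar X\otimes_S T_S(E)\otimes_S X \xto{\sim} T_{S'}(E')$ by induction and then transport the entire action through it; the paper instead invokes the universal property of the tensor algebra at the outset, observing that a right $T_S(E)$-module is the same data as a right $S$-module $Y$ together with a single $S$-module map $\phi\colon Y\otimes_S E\to Y$. With that reduction in hand, the paper only needs to produce the corresponding degree-one map $\phi'\colon (Y\otimes_S X)\otimes_{S'} E'\to Y\otimes_S X$, which is exactly your displayed composition restricted to degree one. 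This sidesteps precisely the coherence bookkeeping you flag as the main obstacle: there is no need to verify that the $\xi_n$ assemble into an algebra map, because associativity and unitality of the induced $T_{S'}(E')$-action are automatic from the universal property once $\phi'$ is specified. Your approach has the mild advantage of making the Morita bimodule $\bar X\otimes_S T_S(E)\otimes_S X\cong T_{S'}(E')$ explicit, but for the statement at hand the paper's argument is strictly shorter.
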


\begin{proof}
Retain the notation of Definition~\ref{def:equiv}. The data defining a right $T_S(E)$-module is equivalent to giving (i) a right $S$-module $Y$, and (ii) a morphism of right $S$-modules  $\phi\colon Y \otimes_S E\to Y$.  The corresponding right $T_{S'}(E')$-module is given by (i) the right $S'$-module $F(Y) = Y \otimes_S X$ and (ii) the morphism of right $S'$-modules $\phi'\colon (Y \otimes_S X)\otimes_{S'} E'\to Y \otimes_S X$ given by the composition (omitting associativity and unit isomorphisms)
\[
(Y \otimes_S X)\otimes_{S'} E' 
\xto{\id_{Y\otimes_S X} \otimes \xi^{-1}} (Y \otimes_S X) \otimes_{S'} \bar{X}\otimes_S E \otimes_S X 
\cong (Y \otimes_S E) \otimes_S X \xto{\phi \otimes \id_X} Y \otimes_S X.  \qedhere
\]
\end{proof}

Our first result is  a unique decomposition theorem for  f.g. $\C$-tensor algebras $T_S(E)$ by writing them as a combination of {\it minimal} components, as defined below.

\begin{definition} \label{def:minl}
Take an exact algebra $S \in \mathcal{C}$ and $E \in {\sf Bimod}_{\mathcal{C}}(S)$, and  a f.g.  $\C$-tensor algebra $T_S(E)$.
\begin{enumerate}
\item We call  $T_S(E)$ {\it minimal} when $E$ is an indecomposable $S$-bimodule in $\mathcal{C}$.
\smallskip
\item If $E = \oplus_i E_i$, for $E_i \in {\sf Bimod}_\C(S)$  indecomposable, then the f.g. $\C$-tensor algebras $\{T_S(E_i)\}_i$ are called the \emph{minimal components} of $T_S(E)$.
\end{enumerate}
\end{definition}

The result below is immediate from the Krull-Schmidt theorem applied to $E$ \cite[Theorem~1.5.7]{EGNO}.

\begin{proposition}\label{prop:decomp}
Let $T_S(E)$ be  a f.g. $\C$-tensor algebra. Then the minimal components $\{T_S(E_i)\}_i$ of $T_S(E)$ are uniquely determined up to reordering and isomorphism class of $E_i$. Moreover, $T_S(E) = T_S(\oplus_i E_i)$ can be reconstructed as a free product of its minimal components $\{T_S(E_i)\}_i$. \qed
\end{proposition}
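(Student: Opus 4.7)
My plan is to prove Proposition~\ref{prop:decomp} in two stages, corresponding to the two assertions: (i) unique decomposition of the minimal components, and (ii) recognition of $T_S(E)$ as the free product of these components.

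For (i), the point is that everything takes place in the category ${\sf Bimod}_\C(S)$, which by Proposition~\ref{prop:bimod} is itself a finite multi-tensor category (using exactness of $S$). In particular, ${\sf Bimod}_\C(S)$ is a locally finite abelian $\kk$-linear category in which every object has finite length, and $E \in {\sf Bimod}_\C(S)$ has finite length in this category because $T_S(E)$ is f.g. Hence the Krull--Schmidt theorem \cite[Theorem~1.5.7]{EGNO} applies to $E$, producing a decomposition $E = \bigoplus_i E_i$ into indecomposable $S$-bimodules $E_i \in \mathcal{C}$ which is unique up to reordering and isomorphism of the summands. By Definition~\ref{def:minl}(a), each $T_S(E_i)$ is then a minimal f.g.\ $\C$-tensor algebra, and these are the minimal components of $T_S(E)$, unique up to reordering and isomorphism of the underlying $E_i$.

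For (ii), the cleanest route is to exhibit $T_S$ as a left adjoint. I would observe that, for any $S$-bimodule algebra $A \in {\sf Ind}({\sf Bimod}_\C(S))$, giving an $S$-algebra morphism $T_S(V) \to A$ is the same as giving a morphism of $S$-bimodules $V \to A$; this is the usual universal property of the tensor algebra, and it carries over directly to ${\sf Ind}(\C)$ since the defining formula $T_S(V) = \bigoplus_{n \geq 0} V^{\otimes_S n}$ still expresses the free $S$-algebra construction. Thus $T_S$ is left adjoint to the forgetful functor from $S$-algebras in ${\sf Ind}(\C)$ to $S$-bimodules, and so $T_S$ preserves colimits. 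In particular it sends the coproduct $\bigoplus_i E_i$ in ${\sf Bimod}_\C(S)$ to the coproduct of the $T_S(E_i)$ in the category of $S$-algebras. Since the coproduct of $S$-algebras is by definition the free product $\ast_S$ over $S$, we conclude
\[
T_S(E) \;=\; T_S\!\Bigl(\bigoplus_i E_i\Bigr) \;\cong\; \underset{i}{\ast_S}\, T_S(E_i),
\]
as required.

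I do not expect any real obstacle here. The only thing to be careful about is verifying the adjunction and its interaction with ${\sf Ind}$-completion, so that the universal property of $T_S(V)$ with $V \in \C$ against algebra targets $A \in {\sf Ind}(\C)$ really does hold; but this is formal once one notes that the tensor powers $V^{\otimes_S n}$ lie in $\C$ and that morphisms into an ind-object are computed as filtered colimits of morphisms into its constituents. Given this, (i) is a direct invocation of Krull--Schmidt and (ii) is a one-line consequence of a standard adjunction.
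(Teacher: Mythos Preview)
Your proposal is correct and, for part (i), coincides verbatim with the paper's proof: the paper simply invokes Krull--Schmidt for $E$ in the finite multi-tensor category ${\sf Bimod}_\C(S)$ (via Proposition~\ref{prop:bimod}), exactly as you do. For part (ii) the paper gives no argument at all beyond the word ``immediate'', so your adjunction argument---that $T_S$ is left adjoint to the forgetful functor from $S$-algebras in ${\sf Ind}(\C)$ to $S$-bimodules, hence sends the coproduct $\bigoplus_i E_i$ to the coproduct (free product over $S$) of the $T_S(E_i)$---is a genuine addition rather than a different route, and is the standard clean way to justify this.
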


\brk Having fixed a minimal component, it will often be possible to simplify the base algebra, as illustrated in the remark below.

\begin{remark}\label{rem:nonunique}
Suppose we have a  f.g. $\C$-tensor algebra $T_S(E)$ and $S$ decomposes as $S=S'\oplus S''$ as an algebra in $\C$.  When $S'$ acts trivially on $E$ \red{(even for $T_S(E)$ minimal)} so that $T_S(E) = S' \oplus T_{S''}(E)$ as algebras, one can study the smaller algebra $T_{S''}(E)$ in place of $T_S(E)$, particularly for classification purposes.  More generally, if $S$ has an ideal $I$  that acts trivially on both sides of $E$, then we could replace $S$ with $S/I$.  
\end{remark}

We keep $S$ fixed in the definition of ``minimal components'' in order to reconstruct $T_S(E)$ from its minimal components uniquely, prompting the terminology below.

\begin{definition}\label{def:S-faithful}
 We say that a f.g. $\mathcal{C}$-tensor algebra $T_S(E)$ is \emph{($S$-)faithful} if there does not exist a nonzero ideal $I$ of $S$ in $\C$ so that $E \in {\sf Bimod}_{\C}(S/I)$. 
 \end{definition}

Basic examples illustrating the minimal and faithful properties (or lack thereof) are shown in Figure \ref{fig:minfaith}.
 
%\vspace{-.35in}
 
%\begin{figure}[h!]
\begin{figure}
\[
\begin{array}{ccccccc}
\vcenter{\hbox{
 \begin{tikzpicture}[point/.style={shape=circle,fill=black,scale=.5pt,outer sep=3pt},>=latex]
   \node[point] (1)  at (0.4,0) {};
   \node[point] (2) at (-0.4,0) {};
      \node[point] (3)  at (0.4,-.7) {};
   \node[point] (4) at (-0.4,-.7) {};
  \draw[<->,dotted,red] (1) to   (2);
    \draw[<->,dotted,red] (3) to   (4);
\draw[->]  (1) to [out=60,in=300,looseness=8]  (1);
 \draw[->]  (2) to [out=120,in=240,looseness=8]  (2);
\draw[->]  (1) to [out=70,in=290,looseness=14]  (1);
 \draw[->]  (2) to [out=110,in=250,looseness=14]  (2);
   \end{tikzpicture}}}
&&
\vcenter{\hbox{\begin{tikzpicture}[point/.style={shape=circle,fill=black,scale=.5pt,outer sep=3pt},>=latex]
   \node[point] (1)  at (0.4,0) {};
   \node[point] (2) at (-0.4,0) {};
      \node[point] (3)  at (0.4,-.7) {};
   \node[point] (4) at (-0.4,-.7) {};
  \draw[<->,dotted,red] (1) to   (2);
    \draw[<->,dotted,red] (3) to   (4);
\draw[->]  (1) to [out=60,in=300,looseness=8]  (1);
 \draw[->]  (2) to [out=120,in=240,looseness=8]  (2);
   \end{tikzpicture}}}
&&
\vcenter{\hbox{\begin{tikzpicture}[point/.style={shape=circle,fill=black,scale=.5pt,outer sep=3pt},>=latex]
   \node[point] (1)  at (0.4,0) {};
   \node[point] (2) at (-0.4,0) {};
      \node[point] (3)  at (0.4,-.7) {};
   \node[point] (4) at (-0.4,-.7) {};
  \draw[<->,dotted,red] (1) to   (2);
    \draw[<->,dotted,red] (3) to   (4);
\draw[->]  (1) edge  (3);
\draw[->]  (2) edge  (4);
\draw[->]  (1) to [out=60,in=300,looseness=8]  (1);
 \draw[->]  (2) to [out=120,in=240,looseness=8]  (2);
   \end{tikzpicture}}}
&&
\vcenter{\hbox{\begin{tikzpicture}[point/.style={shape=circle,fill=black,scale=.5pt,outer sep=3pt},>=latex]
   \node[point] (1)  at (0.4,0) {};
   \node[point] (2) at (-0.4,0) {};
      \node[point] (3)  at (0.4,-.7) {};
   \node[point] (4) at (-0.4,-.7) {};
  \draw[<->,dotted,red] (1) to   (2);
    \draw[<->,dotted,red] (3) to   (4);
\draw[->]  (1) edge  (3);
\draw[->]  (2) edge  (4);
 \draw[->, white, opacity=0]  (1) to [out=60,in=300,looseness=8]  (1);
   \end{tikzpicture}}}
   \vspace{-.1in}
\\
%\hline
\text{not minimal, not faithful} && \text{minimal, not faithful} && \text{faithful, not minimal} && \text{minimal and faithful}\\
%\hline
\end{array} 
\]
\centering
\captionsetup{justification=centering}
\caption{ Quivers with actions of $G = \mathbb{Z}_2$ (indicated by the dotted red arrow) that give rise to path algebras in $\mathsf{Rep}(G)$, or $\mathsf{Rep}(G)$-tensor algebras, with the properties listed above}\label{fig:minfaith}
\end{figure}
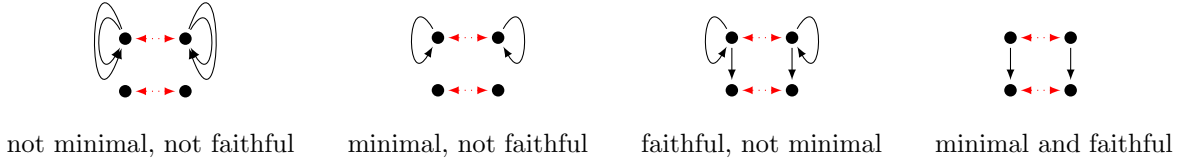

We end this part by illustrating the failure of reconstructing $T_S(E)$ from the indecomposable summands of $E$, if we were to further reduce the base algebras of minimal components of $T_S(E)$ that are not $S$-faithful.

\begin{example}
Let $\C=\Rep(G)$ where $G=\langle g \rangle$ is cyclic of order 2, and consider the actions of $G$ on the following two quivers indicated by the dotted red arrow.

\vspace{-.25in}

\[
Q= \vcenter{\hbox{\begin{tikzpicture}[point/.style={shape=circle,fill=black,scale=.5pt,outer sep=3pt},>=latex]
   \node[point] (1)  at (0.4,0) {};
   \node[point] (2) at (-0.4,0) {};
      \node[point] (3)  at (0.4,-.7) {};
   \node[point] (4) at (-0.4,-.7) {};
  \draw[<->,dotted,red] (1) to   (2);
    \draw[<->,dotted,red] (3) to   (4);
\draw[->]  (1) edge  (3);
\draw[->]  (2) edge  (4);
\draw[->]  (1) to [out=60,in=300,looseness=8]  (1);
 \draw[->]  (2) to [out=120,in=240,looseness=8]  (2);
 \draw[->, white, opacity=0]  (3) to [out=60,in=300,looseness=8]  (3);
   \end{tikzpicture}}}
\qquad \qquad \qquad
Q'=
    \vcenter{\hbox{\begin{tikzpicture}[point/.style={shape=circle,fill=black,scale=.5pt,outer sep=3pt},>=latex]
   \node[point] (1)  at (0.4,0) {};
   \node[point] (2) at (-0.4,0) {};
      \node[point] (3)  at (0.4,-.7) {};
   \node[point] (4) at (-0.4,-.7) {};
  \draw[<->,dotted,red] (1) to   (2);
    \draw[<->,dotted,red] (3) to   (4);
\draw[->]  (1) edge  (3);
\draw[->]  (2) edge  (4);
 \draw[->, white, opacity=0]  (1) to [out=60,in=300,looseness=8]  (1);
\draw[->]  (3) to [out=60,in=300,looseness=8]  (3);
 \draw[->]  (4) to [out=120,in=240,looseness=8]  (4);
   \end{tikzpicture}}}
\]

\vspace{-.1in}

\noindent This gives rise to two path algebras $\kk Q$ and $\kk Q'$ with actions of $G$, and thus two f.g. $\C$-tensor algebras as in Example \ref{ex:HQ}; call them $T_S(E)$ and $T_{S}(E')$. The minimal components of $T_S(E)$ and $T_{S}(E')$ are the path algebras on the following two sets of quivers, respectively.

\vspace{-.2in}

\[
\vcenter{\hbox{\begin{tikzpicture}[point/.style={shape=circle,fill=black,scale=.5pt,outer sep=3pt},>=latex]
   \node[point] (1)  at (0.4,0) {};
   \node[point] (2) at (-0.4,0) {};
      \node[point] (3)  at (0.4,-.7) {};
   \node[point] (4) at (-0.4,-.7) {};
  \draw[<->,dotted,red] (1) to   (2);
    \draw[<->,dotted,red] (3) to   (4);
\draw[->]  (1) to [out=60,in=300,looseness=8]  (1);
 \draw[->]  (2) to [out=120,in=240,looseness=8]  (2);
  \draw[->, white, opacity=0]  (3) to [out=60,in=300,looseness=8]  (3);
   \end{tikzpicture}}}, \quad
\vcenter{\hbox{\begin{tikzpicture}[point/.style={shape=circle,fill=black,scale=.5pt,outer sep=3pt},>=latex]
   \node[point] (1)  at (0.4,0) {};
   \node[point] (2) at (-0.4,0) {};
      \node[point] (3)  at (0.4,-.7) {};
   \node[point] (4) at (-0.4,-.7) {};
  \draw[<->,dotted,red] (1) to   (2);
    \draw[<->,dotted,red] (3) to   (4);
\draw[->]  (1) edge  (3);
\draw[->]  (2) edge  (4);
   \end{tikzpicture}}}
\qquad\qquad \text{and} \qquad \qquad
    \vcenter{\hbox{\begin{tikzpicture}[point/.style={shape=circle,fill=black,scale=.5pt,outer sep=3pt},>=latex]
   \node[point] (1)  at (0.4,0) {};
   \node[point] (2) at (-0.4,0) {};
      \node[point] (3)  at (0.4,-.7) {};
   \node[point] (4) at (-0.4,-.7) {};
  \draw[<->,dotted,red] (1) to   (2);
    \draw[<->,dotted,red] (3) to   (4);
     \draw[->, white, opacity=0]  (1) to [out=60,in=300,looseness=8]  (1);
\draw[->]  (3) to [out=60,in=300,looseness=8]  (3);
 \draw[->]  (4) to [out=120,in=240,looseness=8]  (4);
   \end{tikzpicture}}}, \quad
       \vcenter{\hbox{\begin{tikzpicture}[point/.style={shape=circle,fill=black,scale=.5pt,outer sep=3pt},>=latex]
   \node[point] (1)  at (0.4,0) {};
   \node[point] (2) at (-0.4,0) {};
      \node[point] (3)  at (0.4,-.7) {};
   \node[point] (4) at (-0.4,-.7) {};
  \draw[<->,dotted,red] (1) to   (2);
    \draw[<->,dotted,red] (3) to   (4);
\draw[->]  (1) edge  (3);
\draw[->]  (2) edge  (4);
   \end{tikzpicture}}} 
\]

\vspace{-.1in}

If, in each minimal component, we removed the summands of $S$ acting trivially on that component, we would arrive at the same set of underlying quivers for  both $T_S(E)$ and $T_S(E')$.
In this case, we would be unable to uniquely reconstruct the original tensor algebras $T_S(E)$ and $T_{S'}(E')$ from  their minimal components, since $T_S(E) \not \cong T_{S}(E')$ as algebras in ${\sf Ind}(\C)$ (i.e., they are not even isomorphic as $\kk$-algebras after forgetting the action of $G$ \cite{LLX}).
\end{example}

%%%%%%%%%%%%%%%%%%%%%%%%%%%%%%%%%%%%%%

\subsection{Classification of tensor algebras}\label{sec:minimal}
When $\C$ is a  finite multi-tensor  category, the classification of minimal (f.g.) $\C$-tensor algebras can be carried out in terms of module categories over $\C$ as we see below. Recall our notion of equivalence for \C-tensor algebras in Definition \ref{def:equiv}.

\begin{theorem} \label{thm:param} 
Let $\C$ be a finite multi-tensor category.
Equivalence classes of (f.g.) $\mathcal{C}$-tensor algebras are in bijection with the pairs  $(\mathcal{M}, [U])$, where
\begin{enumerate}
\item[\textnormal{(i)}] $\mathcal{M}$ is an exact  $\mathcal{C}$-module category; and
\item[\textnormal{(ii)}]  $[U]$ is the conjugacy class of an object $U$ in ${\sf Fun}_{\mathcal{C}}(\mathcal{M}, \mathcal{M})$ $= \C_\M^*$.
\end{enumerate}
Here, $U, U' \in \C_\M^*$ are conjugate if there exists an autoequivalence (i.e., an invertible object) $F \in \C_\M^*$ so that $F U F^{-1}$ is isomorphic to $U'$.

Furthermore, the equivalence classes of minimal (f.g.) $\C$-tensor algebras are classified by the above data when restricting to $U$ indecomposable.
\end{theorem}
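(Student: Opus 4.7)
The plan is to exhibit the claimed bijection using only the Morita-theoretic correspondences already assembled in Section~\ref{sec:bg}. The forward map will send $T_S(E)$ to the pair $(\mathcal{M}, [U])$, where $\mathcal{M} := {\sf Mod}_\mathcal{C}\text{-}S$ is exact (since $S$ is exact), and $U$ is the image of $E$ under the tensor equivalence $\Phi_S : {\sf Bimod}_\mathcal{C}(S) \xto{\sim} (\mathcal{C}_\mathcal{M}^*)^{\rm op}$ recalled just before Proposition~\ref{prop:bimod}; concretely, $U$ is the $\mathcal{C}$-module endofunctor $-\otimes_S E$ of $\mathcal{M}$. Surjectivity is then immediate from Theorem~\ref{thm:ModcA}: given a pair $(\mathcal{M}, [U])$, pick any realization $\mathcal{M} \simeq {\sf Mod}_\mathcal{C}\text{-}S$, pull $U$ back through $\Phi_S^{-1}$ to obtain $E \in {\sf Bimod}_\mathcal{C}(S)$, and form $T_S(E)$.

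The heart of the argument is to check that the forward map descends to equivalence classes and is injective. For well-definedness, I would take an equivalence $(F,\xi) : T_S(E) \to T_{S'}(E')$ as in Definition~\ref{def:equiv} and use the fact that the $\mathcal{C}$-module equivalence $F : \mathcal{M} \to \mathcal{M}'$ induces a tensor equivalence $\widehat{F} : \mathcal{C}_\mathcal{M}^* \to \mathcal{C}_{\mathcal{M}'}^*$ by conjugation $G \mapsto F G F^{-1}$. A direct computation with $F(M) = M \otimes_S X$ and $F^{-1}(N) = N \otimes_{S'} \bar{X}$ identifies $\widehat{F}(U)$ with the endofunctor associated to the $S'$-bimodule $\bar{X} \otimes_S E \otimes_S X$, so the datum $\xi$ is precisely an isomorphism $\widehat{F}(U) \cong U'$. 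Identifying $\mathcal{M}'$ with $\mathcal{M}$ via $F$ therefore turns $U'$ into a conjugate of $U$, which preserves the conjugacy class $[U]$. For injectivity, I would suppose two $\mathcal{C}$-tensor algebras are assigned the same pair, fix realizations of $\mathcal{M}$ as the module categories of each base algebra, use the conjugacy relating the two transported copies of $U$ to absorb the conjugating autoequivalence of $\mathcal{M}$ into one of the realizations, and read off the required data $(F,\xi)$ from the resulting $S'$-bimodule identification.

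The minimal claim is then a clean cleanup: since $\Phi_S$ is an additive equivalence, $E$ is indecomposable in ${\sf Bimod}_\mathcal{C}(S)$ if and only if $U = \Phi_S(E)$ is indecomposable in $\mathcal{C}_\mathcal{M}^*$, so the bijection restricts as stated. I expect the main obstacle to lie in the injectivity step: converting the abstract statement that two images of $E$ are conjugate in $\mathcal{C}_\mathcal{M}^*$ into an \emph{explicit} $S'$-bimodule isomorphism $\xi$ of the precise form required by Definition~\ref{def:equiv}. Because $\Phi_S$ is not strict, tracking how conjugation by an autoequivalence of $\mathcal{M}$ lifts through the identification ${\sf Fun}_\mathcal{C}(\mathcal{M},\mathcal{M}') \simeq {\sf Bimod}_\mathcal{C}(S,S')$ to a genuine bimodule isomorphism requires careful bookkeeping of the associator and unit constraints, but no ingredients beyond those assembled in Section~\ref{sec:bg} should be needed.
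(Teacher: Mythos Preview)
Your proposal is correct and follows essentially the same approach as the paper. The paper's own proof is even more compressed than yours: it invokes Theorem~\ref{thm:ModcA} for the correspondence between base algebras and exact module categories exactly as you do, and then, for the bimodule part, simply declares the bijection ``immediate from Definition~\ref{def:equiv}'' in light of the remark (stated just after that definition) that the datum $\xi$ is already an isomorphism $\widehat{F}(E)\cong E'$; the bookkeeping you flag as the main obstacle is not carried out explicitly in the paper.
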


\begin{proof} 
For any exact algebra $S$ in $\C$ we have that $\Mod_\C$-$S$ is an exact $\C$-module category, by definition. On the other hand, by Theorem~\ref{thm:ModcA}, every exact $\mathcal{C}$-module category is equivalent to one of the form $\Mod_{\mathcal{C}}$-$S$ for some exact algebra $S$ in $\mathcal{C}$.
This shows that the choice for the base algebra $S$ of a $\C$-tensor algebra, up to Morita equivalence of $S$, is determined by the data in (i), up to equivalence of $\C$-module categories.

Fixing such an $S$, it is immediate from Definition \ref{def:equiv} that equivalence classes of $\C$-tensor algebras $T_S(E)$ are in bijection with conjugacy classes of objects $U$, and minimal tensor algebras correspond to indecomposable $U$ by definition.
\end{proof}

Let us consider in more detail the case of minimal $\C$-tensor algebras.  Classifying $\C$-tensor algebras can be carried out by first classifying base algebras $S$ via exact $\C$-module categories. Then, one needs to classify indecomposable $S$-bimodules $E$, identifying those which are invertible in $\Bimod_\C(S)$, and then compute conjugates $\bar{X} \otimes_S E \otimes_S X$ (where $X$ varies over the invertible bimodules and $\bar{X}$ denotes the inverse of $X$ in the bimodule category here and below) to identify a representative from each conjugacy class.

When $S=S_1\oplus S_2$ with each $S_i$ an indecomposable algebra in $\C$, 
an $S$-bimodule can be identified with a $2 \times 2$ matrix $E=(E_{ij})$ where each $E_{ij}$ is an $(S_i, S_j)$-bimodule in $\C$.  Restricting to minimal, faithful $\C$-tensor algebras, we can assume
\[
E= \begin{pmatrix} 0 & E_{12} \\ 0 & 0 \end{pmatrix}, \qquad E_{12} \in \Bimod_{\C}(S_1, S_2) \quad \text{indecomposable.}
\]
Without loss of generality we may consider invertible $S$-bimodules of the form
\begin{equation*}
%\label{eq:invertible}
X= \begin{pmatrix} X_1 & 0 \\ 0 & X_2 \end{pmatrix}, \qquad X_i \text{ an invertible $S_i$-bimodule.}
\end{equation*}
Then $\otimes_S$ corresponds to matrix multiplication and we compute the conjugation as
\begin{equation} \label{eq:inv}
\bar{X} \otimes_S E \otimes _S X= \begin{pmatrix} \bar{X_1} & 0 \\ 0 & \bar{X_2} \end{pmatrix}
\begin{pmatrix} 0 & E_{12} \\ 0 & 0 \end{pmatrix}
\begin{pmatrix} X_1 & 0 \\ 0 & X_2 \end{pmatrix}
=\begin{pmatrix} 0 & \bar{X_1}  \otimes_S E_{12}\otimes_S X_2 \\ 0 & 0 \end{pmatrix}.
\end{equation}
A particular consequence is recorded in the following \red{proposition}.

\begin{proposition}\label{prop:2vertexinvertible}
Let $S$ be an indecomposable algebra in $\C$ and let $E$ be a faithful, invertible $S$-bimodule $\C$.  Then the tensor algebra $T_{S\oplus S}(E)$  is equivalent to $T_{S\oplus S}(S)$.
\end{proposition}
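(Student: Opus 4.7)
The plan is to construct an explicit equivalence, in the sense of Definition~\ref{def:equiv}, from $T_{S \oplus S}(E)$ to $T_{S \oplus S}(S)$. Since both $\C$-tensor algebras share the base algebra $S \oplus S$, the $\C$-module category $\M := \Mod_\C\text{-}(S \oplus S)$ is the same on both sides. Following the conventions fixed just before equation~(\ref{eq:inv}), I package $E$ as the $(S\oplus S)$-bimodule supported in the $(1,2)$-position and likewise for $S$; an equivalence will then be witnessed by a single invertible $(S\oplus S)$-bimodule $X = \mathrm{diag}(X_1, X_2)$ whose conjugation action on the $(1,2)$-entry sends $E$ to $S$.

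For this, I will choose $X_1 := E$ and $X_2 := S$. The bimodule $X$ is invertible in $\Bimod_\C(S \oplus S)$ because both diagonal entries are invertible in $\Bimod_\C(S)$: $E$ is invertible by hypothesis (with inverse $\bar E$), and $S$ is the unit of $\Bimod_\C(S)$. Substituting into~(\ref{eq:inv}) then yields
\[
\bar{X} \otimes_{S\oplus S} E \otimes_{S\oplus S} X
\;\cong\; \bar{E} \otimes_S E \otimes_S S
\;\cong\; S,
\]
placed in the $(1,2)$-position, which is exactly the generating bimodule of $T_{S \oplus S}(S)$.

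To finish, this data assembles into an equivalence $(F, \xi)$ per Definition~\ref{def:equiv}, with $F\colon M \mapsto M \otimes_{S\oplus S} X$ the $\C$-module autoequivalence of $\M$ and $\xi$ the isomorphism just displayed. Alternatively, one can phrase the conclusion via Theorem~\ref{thm:param}: $E$ and $S$ represent the same conjugacy class $[U]$ of objects in $\C^*_\M$, so the associated tensor algebras are equivalent. I do not foresee a substantive obstacle here; the only care required is the routine verification that invertibility in $\Bimod_\C(S \oplus S)$ reduces componentwise to invertibility of $X_1$ and $X_2$ in $\Bimod_\C(S)$, and that the matrix product in~(\ref{eq:inv}) respects the $(1,2)$-position convention used for $E$.
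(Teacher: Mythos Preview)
Your proof is correct and follows essentially the same approach as the paper: choose $X_1 = E$ and $X_2 = S$ in equation~(\ref{eq:inv}) to obtain $\bar{E} \otimes_S E \otimes_S S \cong S$. You have simply spelled out more of the surrounding context than the paper does.
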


\begin{proof}
Considering~\eqref{eq:inv} with $X_1 = E$ and $X_2 =S$, we get that 
$$ \bar{X_1}  \otimes_S E_{12}\otimes_S X_2 \;=\;  \bar{E}  \otimes_S E \otimes_S S  \;\cong\;S.$$
This yields the desired result.
\end{proof}

\brk The next consequence of Theorem \ref{thm:param} holds by Theorem~\ref{thm:2categ}. (The minimality condition can be removed via Proposition~\ref{prop:decomp}.)

\begin{corollary}\label{cor:morita}
A categorical Morita equivalence between finite multi-tensor categories $\C$ and $\D$ induces a bijection between equivalence classes of (minimal) $\C$-tensor algebras and of  (minimal) $\D$-tensor algebras.  \qed
\end{corollary}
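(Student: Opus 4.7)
The plan is to deduce the corollary directly from the classification in Theorem~\ref{thm:param} combined with Theorem~\ref{thm:2categ}. By Theorem~\ref{thm:param}, equivalence classes of $\C$-tensor algebras (resp., $\D$-tensor algebras) are in bijection with pairs $(\M,[U])$ where $\M$ is an exact $\C$-module (resp., $\D$-module) category and $[U]$ is a conjugacy class of objects in $\C_\M^*$ (resp., $\D_\M^*$). Thus, it suffices to produce a bijection between such pairs on the two sides which sends indecomposable $U$ to indecomposable $U$.

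A categorical Morita equivalence between $\C$ and $\D$, by definition, is the data of an exact $\C$-module category $\M_0$ together with a tensor equivalence $\D^{\rm op} \simeq \C_{\M_0}^*$. First I would invoke Theorem~\ref{thm:2categ} to get a 2-equivalence
\[
\Phi \colon \mathsf{Mod}(\C) \xrightarrow{\;\sim\;} \mathsf{Mod}((\C_{\M_0}^*)^{\rm op}) \simeq \mathsf{Mod}(\D), \qquad \M \longmapsto \mathsf{Fun}_\C(\M_0,\M).
\]
This immediately yields a bijection on the level of equivalence classes of exact module categories, taking care of datum~(i) in Theorem~\ref{thm:param}.

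Next, to handle datum~(ii), I would use the fact that any 2-equivalence of module 2-categories induces, for each object $\M$, a tensor equivalence of endomorphism categories $\C_\M^* = \mathsf{Fun}_\C(\M,\M) \xrightarrow{\;\sim\;} \mathsf{Fun}_\D(\Phi(\M),\Phi(\M)) = \D_{\Phi(\M)}^*$. Tensor equivalences preserve both indecomposability of objects and the notion of conjugacy by invertible objects, so conjugacy classes $[U]$ in $\C_\M^*$ correspond bijectively to conjugacy classes $[\Phi(U)]$ in $\D_{\Phi(\M)}^*$, and indecomposable objects on one side correspond to indecomposable objects on the other. This establishes the bijection on the data $(\M,[U])$, preserving (and reflecting) the minimality condition.

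The only step requiring care is checking that the induced tensor equivalence on duals really does respect the conjugation relation used in Theorem~\ref{thm:param}; but this is essentially formal, since a tensor equivalence sends invertible objects to invertible objects and preserves $\otimes$, hence preserves the relation $U' \cong F U F^{-1}$. The final sentence of the corollary (the parenthetical about minimal tensor algebras) then follows from the indecomposability clause in Theorem~\ref{thm:param}; the non-minimal version is obtained by applying the bijection summand-by-summand via Proposition~\ref{prop:decomp}. I do not anticipate a real obstacle here: the content of the corollary is essentially a translation of Theorem~\ref{thm:2categ} through the parametrization of Theorem~\ref{thm:param}.
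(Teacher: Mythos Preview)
Your proposal is correct and follows essentially the same approach as the paper: the paper's proof consists of the single sentence that the corollary holds by Theorem~\ref{thm:param} combined with Theorem~\ref{thm:2categ}, with the minimality clause handled via Proposition~\ref{prop:decomp}. You have simply spelled out in detail how those two theorems fit together, which is exactly the intended argument.
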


By Remark~\ref{rmk:RepGam} below, we have such a bijection between (minimal) $\Rep(G)$- and $\mathsf{Vec}_G$-tensor algebras for $G$ a finite group. In general, the data in Theorem~\ref{thm:param} depends intimately on the structure of $\mathcal{C}$; we explore this in Section~\ref{sec:VecGw} for $\mathcal{C} = {\sf Vec}_G^\omega$, and in Sections~\ref{sec:tengt} and~\ref{sec:Path} for $\mathcal{C}$ a group-theoretical fusion category.

\smallskip

This brings us to the following finiteness result in the case when $\mathcal{C}$ is a multi-fusion category, beginning with a remark for $\C$ as above.

\begin{remark}\label{rem:S-faithful}
Recall from Corollary \ref{cor:Adecomp} that an exact algebra $S$ in $\C$ has a unique decomposition  into indecomposable algebras $S=\bigoplus_i S_i$.  For any f.g. $\C$-tensor algebra $T_S(E)$, this gives a decomposition of $S$-bimodules $E= \oplus_{i,j} E_{ij}$ where each $E_{ij} \in {\sf Bimod}_\C(S_i,S_j)$.  Therefore, if $T_S(E)$ is a minimal $\C$-tensor algebra, at most two of the indecomposable summands of $S$ can act nontrivially on $E$.
\end{remark}

\begin{corollary}\label{cor:finitelymany}
In a multi-fusion category $\C$, there are only finitely many minimal, faithful $\mathcal{C}$-tensor algebras, up to equivalence.
\end{corollary}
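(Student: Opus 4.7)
The plan is to apply Theorem~\ref{thm:param} to recast the count of minimal, faithful $\C$-tensor algebras (up to equivalence) as a count of pairs $(\M,[U])$ in which $\M$ is an exact $\C$-module category and $[U]$ is the conjugacy class of an indecomposable object of $\C^*_\M$. Because $\C$ is multi-fusion, ``exact'' is equivalent to ``finite semisimple,'' as observed in Section~\ref{sec:bg}. It therefore suffices to bound (i) the number of $\M$'s arising from a minimal, faithful $\C$-tensor algebra, and (ii) for each such $\M$, the number of conjugacy classes of indecomposables in $\C^*_\M$.

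First I would constrain the shape of the base $\M \simeq \Mod_\C\text{-}S$. Using the unique decomposition $S = \bigoplus_i S_i$ into indecomposable algebras (Corollary~\ref{cor:Adecomp}), each $S_i$ is an ideal of $S$ since $S_i \otimes_S S_j \cong \delta_{ij}\, S_i$. The faithfulness hypothesis then forbids any $S_i$ from acting trivially on both sides of $E$, while Remark~\ref{rem:S-faithful} forces at most two of the $S_i$ to act nontrivially on $E$ when $T_S(E)$ is minimal. Combining these, $S$ has at most two indecomposable summands, so $\M$ is equivalent to a direct sum of at most two indecomposable semisimple $\C$-module categories.

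Next I would invoke the finiteness result that a multi-fusion category admits only finitely many indecomposable semisimple module categories up to equivalence, a consequence of Ocneanu rigidity due to Etingof--Nikshych--Ostrik \cite{ENO}. This produces a finite list of candidate $\M$'s. For each such fixed $\M$, Theorem~\ref{thm:Fun-ss} yields that $\C^*_\M = \Fun_\C(\M,\M)$ is finite semisimple, hence has only finitely many isomorphism classes of indecomposable objects, and \emph{a fortiori} only finitely many conjugacy classes of them. Multiplying the two finite bounds yields the stated finiteness.

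The step that draws on deep external input, and is therefore the main obstacle, is the rigidity-type finiteness statement for indecomposable semisimple module categories over a multi-fusion category; the remainder of the argument is organizational, packaging Theorem~\ref{thm:param} together with the bimodule-decomposition results already developed in the paper.
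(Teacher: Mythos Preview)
Your proposal is correct and follows essentially the same approach as the paper: constrain $S$ to at most two indecomposable summands via Corollary~\ref{cor:Adecomp} and Remark~\ref{rem:S-faithful}, invoke Ocneanu rigidity to bound the indecomposable module categories, and then use finite semisimplicity of the dual/bimodule category to bound the indecomposable $E$'s. The only cosmetic difference is that you phrase the second finiteness via Theorem~\ref{thm:Fun-ss} applied to $\C^*_\M$, whereas the paper phrases it as a second application of Ocneanu rigidity to $\Bimod_\C(S)$ via Proposition~\ref{prop:bimod}; these are the same statement under the equivalence $\C^*_\M \simeq \Bimod_\C(S)^{\rm op}$.
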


\begin{proof}
Let $T_S(E)$ be a minimal, $S$-faithful $\mathcal{C}$-tensor algebra.
By Corollary \ref{cor:Adecomp} and Remark \ref{rem:S-faithful}, $S$-faithfulness implies that $S$ has a unique decomposition into a direct sum of either 1 or 2 indecomposable algebras. By Ocneanu rigidity \cite[Corollary~9.1.6(ii)]{EGNO}, the number of choices of these summands is finite, up to equivalence.  Then for any such base algebra $S$, Ocneanu rigidity again applies to the category $\Bimod_\C(S)$ by Proposition~\ref{prop:bimod}.  Since $T_S(E)$ is assumed minimal, there are only finitely many choices of the indecomposable $S$-bimodule $E$ as well, thus finitely many equivalence classes.
\end{proof}

\begin{example}\label{ex:quiverminimal}
Take $\mathcal{C} = {\sf Vec}$. Then the base algebra $S$ of a minimal, faithful $\mathcal{C}$-tensor algebra is Morita equivalent to either $\kk$ or $\kk \times \kk$, by using both the fact that exact $\kk$-algebras are semisimple \cite[Example~7.5.4]{EGNO} and the Artin-Wedderburn theorem.
\begin{itemize}
\item If $S = \kk$, then an indecomposable faithful $S$-bimodule $E$ must be $\kk$. Here, a minimal, faithful $\mathcal{C}$-tensor algebra is unique up to equivalence, and is a path algebra of a loop (isomorphic to $\kk[x]$). 
\smallskip
\item If $S = \kk \times \kk$, then it has two isomorphism classes of indecomposable, faithful bimodules, yielding $\C$-tensor algebras isomorphic to ${\scriptsize \begin{pmatrix} \kk & \kk \\ 0 & \kk \end{pmatrix}}$ and ${\scriptsize \begin{pmatrix} \kk & 0 \\ \kk & \kk \end{pmatrix}}$. But the two bimodules are conjugate by the autoequivalence of $\Mod_\C$-$S$ switching the actions of the two copies of $\kk$.   Thus we have a unique equivalence class of minimal, faithful $\C$-tensor algebra over this $S$; it can be represented by the path algebra of Dynkin type $A_2$.
\end{itemize} 
This shows that each equivalence class of a finitely generated {\sf Vec}-tensor algebra is represented by a path algebra of a finite quiver, as we know from Theorem \ref{thm:eqKQ}.
\end{example}

Finally, we consider a special case of minimal $\C$-tensor algebras $T_S(E)$ where the module category $\M$ of Theorem \ref{thm:param}  (corresponding to $S$) is  $\C$ itself.

\begin{example}\label{ex:extremes}
Fix $\mathcal{C}$ a finite multi-tensor category. Here, we consider equivalence classes of minimal $\mathcal{C}$-tensor algebras $T_S(E)$ for $S = {\bf 1}$, so $\M = {\sf Mod}_\C$-$S$ is $\C$.  (If $\C$ is tensor, we could  take $S=X \otimes X^*$ for any nonzero $X \in \C$; see Example~\ref{ex:S=1}.) 
The tensor equivalence ${\sf Fun}_{\mathcal C}(\mathcal{C}, \mathcal{C}) \sim \mathcal{C}^{\rm op}$ \cite[Example~7.12.3]{EGNO} shows that the equivalence classes of minimal $\mathcal{C}$-tensor algebras of the form $T_{\bf 1}(E)$ are in bijection with conjugacy classes of indecomposable objects in $\C$.
\end{example}

%%%%%%%%%%%%%%%%%%%%%%%%%%%%%%%%%%%%%

\subsection{Filtered actions of semisimple Hopf algebras on tensor algebras}\label{sec:CHopf}
In this part, we restrict our attention to the case when $\mathcal{C}=\Rep(H)$, the category of finite-dimensional representations of a semisimple Hopf algebra $H$.  We first study Hopf actions of $H$ (see \cite{Montgomery,Radford}) on the $\mathsf{Vec}$-tensor algebras $T_B(V)$ discussed at beginning of Section~\ref{sec:main}; here, $B$ is a finite-dimensional semisimple $\kk$-algebra and $V$ is a $B$-bimodule. We then examine $H$-actions on the degree-completed tensor algebras $\widehat{T}_B(V)$ described in Definition~\ref{def:complete}. We do not assume that these $H$-actions preserve grading (i.e.,  tensor algebras below are not necessarily ${\sf Rep}(H)$-tensor algebras as in Definition~\ref{S,E}).

\smallskip

Recall that $T_B(V)$ is naturally equipped with an ascending filtration as a $\kk$-vector space.  Our first result shows that an $H$-action preserving this filtration is isomorphic to a graded action.

\begin{proposition} \label{prop:filtration}
Let $T_B(V)$ be a $\mathsf{Vec}$-tensor algebra with an action of a semisimple Hopf algebra $H$ that preserves the ascending filtration of $T_B(V)$.  Then $T_B(V)$ is isomorphic as a $\Rep(H)$-module algebra to the $\Rep(H)$-tensor algebra $T_B(W)$, where $W:=(B\oplus V)/B$ as a $B$-bimodule in $\Rep(H)$.  Therefore, the data of Theorem~\ref{thm:param} with $\C=\Rep(H)$ classifies such actions.
\end{proposition}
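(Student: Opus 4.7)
The plan is to exhibit a filtered algebra isomorphism from $T_B(V)$ to a suitable graded $\Rep(H)$-tensor algebra $T_B(W)$, where $W$ is obtained as an $H$-equivariant splitting of the first piece of the filtration. First I would record that the ascending filtration $F_0 \subset F_1 \subset \cdots$ on $T_B(V)$ (with $F_n = B \oplus V \oplus \cdots \oplus V^{\otimes_B n}$) being $H$-stable forces $F_0 = B$ to be an $H$-stable subalgebra, so $B$ becomes an algebra in $\Rep(H)$, and $F_1 = B \oplus V$ becomes a $(B,B)$-bimodule in $\Rep(H)$ containing $B$ as a sub-bimodule.

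Next, I would produce the $B$-bimodule $W$. Since $H$ is semisimple, $\Rep(H)$ is a fusion category, and $B$ is a semisimple (hence exact, indecomposable or not) algebra in it, Proposition~\ref{prop:bimod} guarantees that $\Bimod_{\Rep(H)}(B)$ is itself semisimple. Consequently the short exact sequence
\[
0 \longrightarrow B \longrightarrow F_1 \longrightarrow W \longrightarrow 0
\]
in $\Bimod_{\Rep(H)}(B)$ splits, producing an embedding $\iota \colon W \hookrightarrow F_1 \subset T_B(V)$ that is simultaneously $B$-bilinear and $H$-equivariant, with the composite $W \xrightarrow{\iota} F_1 \twoheadrightarrow V$ an isomorphism of $B$-bimodules in $\Rep(H)$ (it is the natural identification $W = F_1/F_0$).

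The universal property of the tensor algebra then supplies a unique $B$-bimodule algebra map $\Phi\colon T_B(W) \to T_B(V)$ extending $\iota$, and this map is automatically $H$-equivariant because $\iota$ is. To see $\Phi$ is an isomorphism, I would equip $T_B(W)$ with its own ascending filtration by degree and observe that $\Phi$ is filtration-preserving, since any $w \in W$ maps into $F_1$. Passing to associated graded algebras with respect to these filtrations, the induced map $\mathrm{gr}(\Phi)\colon T_B(W) \to \mathrm{gr}(T_B(V)) \cong T_B(V)$ is the identity on $B$ and on the degree-one piece agrees with the isomorphism $W \xrightarrow{\sim} F_1/F_0 = V$ identified above; hence $\mathrm{gr}(\Phi)$ is an isomorphism, and a standard argument (both filtrations are exhaustive and bounded below) upgrades this to $\Phi$ being an isomorphism of filtered $H$-module algebras. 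This identifies $T_B(V)$ with the $\Rep(H)$-tensor algebra $T_B(W)$, which places it within the scope of Theorem~\ref{thm:param}.

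The main subtlety will be step two: insisting that the splitting of $F_1 \twoheadrightarrow W$ be compatible with both the $B$-bimodule and $H$-module structures at once. This is the reason the argument genuinely uses that $H$ is \emph{semisimple} rather than just finite-dimensional; without semisimplicity of $\Bimod_{\Rep(H)}(B)$, no such simultaneous splitting need exist, and the conclusion would fail in general (this is why the analogous statement for pointed Hopf algebras in \cite{KinserWalton} requires substantially more work).
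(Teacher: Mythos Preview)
Your approach is essentially the same as the paper's: both identify the short exact sequence $0 \to B \to F_1 \to W \to 0$ of $B$-bimodules in $\Rep(H)$ and split it using semisimplicity. The paper is terser, citing \cite[Theorem~6]{CF86} directly for the splitting and leaving the passage from $B\oplus W \cong B\oplus V$ to $T_B(W)\cong T_B(V)$ implicit, whereas you spell out the universal-property and associated-graded argument.

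One small point: your appeal to Proposition~\ref{prop:bimod} requires that $B$ be semisimple \emph{in $\Rep(H)$} (i.e., that $\Mod_{\Rep(H)}$-$B$ is semisimple), not merely semisimple as a $\kk$-algebra. This is true---modules over $B$ in $\Rep(H)$ are modules over the smash product $B\#H$, which is semisimple since both $B$ and $H$ are---but it is exactly what the paper's citation of \cite{CF86} supplies, so you should either make that identification explicit or cite the same result.
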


\begin{proof}
The assumption that the $H$-action preserves the ascending filtration of $T_B(V)$ gives us that $B$ is an algebra in $\Rep(H)$ and $W:=(B\oplus V)/B$ is a $B$-bimodule in $\Rep(H)$.  
So, we get a short exact sequence of $B$-bimodules in $\Rep(H)$, $$0 \to B \to B\oplus V \to W \to 0.$$ 
Since $H$ and $B$ are semisimple, this sequence splits by \cite[Theorem~6]{CF86}. \red{Thus, we get an isomorphism $B\oplus W \cong B\oplus V$ of $B$-bimodules in $\mathsf{Rep}(H)$, as required.} 
%A splitting followed by projection $W \to B\oplus V \to V$ then gives an isomorphism $W \cong V$ in ${\sf Bimod}_{{\sf Rep}(H)}(B)$.
\end{proof}

Next, we consider $H$-actions on completed tensor algebras.

\begin{definition} \label{def:complete}
Let $R=T_B(V)$ be a $\sVec$-tensor algebra as above, and $J$ the 2-sided ideal of $R$ generated by $V$.  The {\it degree-completion} $\widehat{T}_B(V)$ is the inverse limit of the system of $\kk$-algebras
\[
\cdots \to R/ J^4 \to R/ J^3 \to R/J^2  \to R/J \cong B.
\]
We denote by $\hat J$ the closure of image of $J$ under the natural injective map $T_B(V) \to \widehat{T}_B(V)$.
\end{definition}

We include a proof of the following lemma, although it is presumably well known.

\begin{lemma}
The Jacobson radical of $\widehat{T}_B(V)$ is the ideal $\hat J$.
\end{lemma}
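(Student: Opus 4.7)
The plan is to establish the two inclusions $\hat{J} \subseteq \operatorname{rad}(\widehat{T}_B(V))$ and $\operatorname{rad}(\widehat{T}_B(V)) \subseteq \hat{J}$ separately, where $\operatorname{rad}$ denotes the Jacobson radical. The starting observation is that, as a $\kk$-vector space, $\widehat{T}_B(V) = \varprojlim T_B(V)/J^n$ is identified with $\prod_{n\geq 0} V^{\otimes_B n}$ (since $J^n = \bigoplus_{k \geq n} V^{\otimes_B k}$), under which $\hat{J}$ corresponds to $\prod_{n\geq 1} V^{\otimes_B n}$ and the quotient $\widehat{T}_B(V)/\hat{J}$ is canonically identified with $B$.

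For $\hat{J} \subseteq \operatorname{rad}(\widehat{T}_B(V))$, I would show directly that $1-x$ is a two-sided unit in $\widehat{T}_B(V)$ for every $x \in \hat{J}$. Writing $x = \sum_{n \geq 1} x_n$ with $x_n \in V^{\otimes_B n}$, each power $x^k$ lies in $\prod_{n \geq k} V^{\otimes_B n}$, so for each $N$ only finitely many terms of $\sum_{k\geq 0} x^k$ contribute modulo $\hat{J}^N$. Hence the partial sums stabilize modulo every $\hat{J}^N$, converge in the inverse-limit topology, and produce a two-sided inverse of $1-x$. Since $\hat{J}$ is a two-sided ideal, applying the same argument to $1-rx$ for arbitrary $r \in \widehat{T}_B(V)$ (noting $rx \in \hat{J}$) and invoking the standard characterization of the Jacobson radical via quasi-regularity gives $\hat{J} \subseteq \operatorname{rad}(\widehat{T}_B(V))$.

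For the reverse inclusion, I would invoke semisimplicity of $B$. Since $\widehat{T}_B(V)/\hat{J} \cong B$ has vanishing Jacobson radical, and since we have already shown $\hat{J} \subseteq \operatorname{rad}(\widehat{T}_B(V))$, the standard identity $\operatorname{rad}(\widehat{T}_B(V))/\hat{J} \subseteq \operatorname{rad}(\widehat{T}_B(V)/\hat{J}) = \operatorname{rad}(B) = 0$ forces $\operatorname{rad}(\widehat{T}_B(V)) \subseteq \hat{J}$, completing the proof.

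The only technical point is the convergence of the geometric series in the first step, but this is precisely the feature of the inverse-limit topology: since the tails of $x^k$ push into arbitrarily high degree, the series is eventually constant modulo each $\hat{J}^N$ and so poses no obstacle.
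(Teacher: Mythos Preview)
Your proof is correct and follows essentially the same approach as the paper's: both arguments use the convergence of the geometric series $(1-j)^{-1}=1+j+j^2+\cdots$ in the completion to show $\hat{J}\subseteq\operatorname{rad}(\widehat{T}_B(V))$, and both use the semisimplicity of $B\cong \widehat{T}_B(V)/\hat{J}$ for the reverse inclusion. The only cosmetic differences are that the paper invokes the maximal-left-ideal description of the Jacobson radical where you invoke quasi-regularity, and the paper handles the two inclusions in the opposite order (observing directly that $\operatorname{rad}(\widehat{T}_B(V))\subseteq\hat{J}$ from semisimplicity of the quotient, without first needing $\hat{J}\subseteq\operatorname{rad}$).
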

\begin{proof}
We write $T:=\widehat{T}_B(V)$.  Since $T/\hat J\cong B$ is semisimple, we know $\hat J$ contains the Jacobson radical of $T$.  On the other hand, let $\mathfrak{m}$ be a maximal left ideal in $T$. Then $\mathfrak{m} \supseteq \hat J$, because otherwise we would have $\mathfrak{m}+\hat J=T$, and could write $1=m+j$ for some $m\in \mathfrak{m}$ and $j\in \hat J$. But then  $m=1-j$ would be invertible with inverse $(1-j)^{-1}=1+j+j^2+\cdots$, which makes sense because of the completion.  This would be a contradiction.  So every maximal left ideal contains $\hat J$, thus the Jacobson radical of $T$ does as well.
\end{proof}

The following proposition shows that in the completed setting, $H$-actions automatically preserve the descending filtration by degree.

\begin{proposition}
Any action of $H$ on $\widehat{T}_B(V)$ preserves $\hat J$.
\end{proposition}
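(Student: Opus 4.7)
The plan is to invoke the preceding lemma to identify $\hat J$ with the Jacobson radical of $T := \widehat T_B(V)$, and then to establish $H$-stability of this radical via an auxiliary algebra map into a finite-dimensional semisimple target.

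Concretely, I would introduce the $\kk$-linear map $\psi \colon T \to B \otimes_\kk H^*$ defined by $\psi(x)(h) := \pi(h \cdot x)$, where $\pi \colon T \twoheadrightarrow B = T/\hat J$ is the natural algebra surjection. Using the module-algebra axiom together with the fact that $\pi$ is an algebra map, one checks that $\psi$ is a unital $\kk$-algebra homomorphism into the convolution algebra $B \otimes H^*$, with product $(f \ast g)(h) = \sum f(h_{(1)}) g(h_{(2)})$. Since $H$ is semisimple in characteristic zero, so is $H^*$, whence $B \otimes H^*$ is a finite-dimensional semisimple $\kk$-algebra. The conclusion $H \cdot \hat J \subseteq \hat J$ is equivalent to $\psi|_{\hat J} = 0$, i.e., to $\psi$ factoring through the augmentation $T \twoheadrightarrow B$.

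Next, continuity of the $H$-action on the complete topological algebra $T$ (the natural compatibility requirement given the $\hat J$-adic topology, with $H$ finite-dimensional) makes $\psi$ continuous into the discrete target. Hence $\psi$ factors through a finite-dimensional quotient $T/\hat J^N$ for some $N$. For any $x \in \hat J$ one then has $\psi(x)^{\ast n} = \psi(x^n) = 0$ whenever $n \geq N$, since $x^n \in \hat J^N \subseteq \ker\psi$. Thus the image $\psi(\hat J)$ is a nil ideal of the finite-dimensional image $A := \psi(T) \subseteq B \otimes H^*$, and so is contained in the Jacobson radical of $A$.

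The main obstacle, and the step where I expect the real work to lie, is to show that the Jacobson radical of $A$ vanishes. Here one should exploit the additional structure that $\psi$ is in fact an $H^*$-comodule algebra map --- with $B \otimes H^*$ carrying the coaction $\id_B \otimes \Delta_{H^*}$ --- so that $A$ is an $H^*$-comodule subalgebra of the semisimple algebra $B \otimes H^*$. A Maschke-type averaging argument using the integral of the semisimple Hopf algebra $H^*$ (or, alternatively, an appeal to the stability of the Jacobson radical under semisimple Hopf actions in the spirit of Linchenko) should force $A$ to be semisimple. This would give $\psi(\hat J) = 0$, i.e., $H \cdot \hat J \subseteq \hat J$, as desired.
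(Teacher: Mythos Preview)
Your map $\psi$ coincides (up to swapping tensor factors) with the paper's composite $\rho'\colon T\to H^*\otimes T/\hat J$, and the overall idea of passing to a finite-dimensional target is the right one. The gap is your appeal to ``continuity of the $H$-action'' with respect to the $\hat J$-adic topology in order to conclude that $\psi$ kills some $\hat J^N$. No such continuity is part of the hypothesis: the proposition concerns an arbitrary $H$-module algebra structure on $\widehat T_B(V)$, and asserting that each $h\in H$ acts $\hat J$-adically continuously already says that $H$ sends high powers of $\hat J$ into $\hat J$, which is essentially what is to be shown. So as written the argument is circular at this step.

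The step can be repaired without continuity. Since $B\otimes H^*$ is finite-dimensional, $K:=\ker\psi$ has finite codimension; in the finite-dimensional quotient $T/K$ the image of $\hat J$ consists of elements $\bar j$ with $1-a\bar j$ invertible for every $a$ (lift and use the preceding lemma), hence lies in the Jacobson radical of $T/K$, which is nilpotent. Thus $\hat J^N\subseteq K$ for some $N$, and your nilpotence argument for $\psi(\hat J)$ goes through. From this point the paper takes a shorter route than your proposed Maschke step: rather than proving the image $A=\psi(T)$ semisimple, it shows via coassociativity that $K$ itself is $H$-stable, so that $T/K$ is a finite-dimensional $H$-module algebra with Jacobson radical $\hat J/K$, and then invokes Linchenko's theorem directly to conclude $H$-stability of $\hat J/K$ and hence of $\hat J$. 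Your ``alternatively, appeal to Linchenko'' option lands in exactly the same place; the comodule-subalgebra averaging argument is not needed.
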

\begin{proof} 
We again write $T:=\widehat{T}_B(V)$.  Consider the coaction map $\rho\colon T\to H^*\otimes T$ and consider the composition $\rho'\colon T\to H^*\otimes T/{\hat J}$ of $\rho$ with projection to $T/{\hat J}$. Let $K=\ker \rho'$, which is an ideal of finite codimension in $T$ because the codomain is finite dimensional. Applying the counit of $H^*$, we get the projection $T\to T/{\hat J}$ by the counit axiom, so $K$ is contained in ${\hat J}$. We claim that $K$ is invariant under the $H$-action. Indeed, for $a\in K$ we have by coassociativity $(1\otimes \rho')\rho(a)=(\Delta\otimes 1)\rho'(a)=0$, so $\rho(a)\in \ker(1 \otimes \rho')=H^*\otimes K$. Thus, the action of $H$ on $T$ descends to an action on the finite dimensional algebra $T/K$. Now by \cite[Theorem~2.1]{Linchenko}, the given $H$-action must preserve ${\hat J}/K$, since ${\hat J}/K$ is the Jacobson radical of $T/K$. Thus, $H$ preserves ${\hat J}$.
\end{proof}

\begin{theorem}\label{thm:complete}
Any action of $H$ on $\widehat{T}_B(V)$ is isomorphic to a graded action of $H$ on $\widehat{T}_B(V)$.
\end{theorem}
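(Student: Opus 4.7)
The plan is to produce a filtered $H$-equivariant algebra isomorphism $\widehat{T}_B(W) \xto{\sim} \widehat{T}_B(V)$, where $W := \hat J/\hat J^2$. By the preceding proposition the $\hat J$-adic filtration is $H$-stable, so the associated graded algebra $\bigoplus_n \hat J^n/\hat J^{n+1}$ inherits a grading-compatible $H$-action; canonically it is $T_B(W)$, whose completion $\widehat{T}_B(W)$ is the desired graded-action model. To compare the two completed algebras, I will construct two $H$-equivariant splittings and assemble them multiplicatively.

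First, I want an $H$-equivariant algebra section $\iota\colon B \hookrightarrow \widehat{T}_B(V)$ of the canonical projection $\widehat{T}_B(V) \twoheadrightarrow B$. Write $A_n := \widehat{T}_B(V)/\hat J^n$; each $A_n$ is a finite-dimensional $H$-module algebra with $H$-stable Jacobson radical $\hat J/\hat J^n$, and $A_1 \cong B$. Starting from $\iota_1 = \id_B$, I inductively lift $\iota_n\colon B \to A_n$ to an $H$-equivariant algebra section $\iota_{n+1}\colon B \to A_{n+1}$: pick any $H$-equivariant $\kk$-linear lift of $\iota_n$ (this exists because $H$ is semisimple), and measure its failure to be multiplicative inside the square-zero $B$-bimodule $\hat J^n/\hat J^{n+1}$ (viewed as an object of $\Rep(H)$ via $\iota_n$); this failure is an $H$-equivariant Hochschild $2$-cocycle on $B$. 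Since $B$ and $H$ are both semisimple, the category of $B$-bimodules in $\Rep(H)$ is semisimple by \cite[Theorem~6]{CF86}, so the Hochschild $2$-cohomology in this category vanishes, and the cocycle is an $H$-equivariant coboundary. Hence $\iota_n$ corrects to $\iota_{n+1}$, and the inverse limit over $n$ produces $\iota$.

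Second, identifying $B$ with $\iota(B) \subset \widehat{T}_B(V)$, the surjection $\hat J \twoheadrightarrow W$ is a morphism of $B$-bimodules in ${\sf Ind}(\Rep(H))$; its reductions modulo $\hat J^n$ give short exact sequences of finite-dimensional $B$-bimodules in $\Rep(H)$ which split by \cite[Theorem~6]{CF86}, and by the usual Mittag-Leffler choice of compatible splittings I obtain an $H$-equivariant $B$-bimodule section $\sigma\colon W \hookrightarrow \hat J$. The universal property of $T_B(W)$ applied to $\iota$ and $\sigma$ produces an $H$-equivariant algebra map $T_B(W) \to \widehat{T}_B(V)$, and continuity extends it to a filtered $H$-equivariant algebra map $\widehat{T}_B(W) \to \widehat{T}_B(V)$. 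By construction this map induces the identity on associated graded algebras, so it is an isomorphism, exhibiting the original $H$-action as isomorphic to the graded action on $\widehat{T}_B(W)$. The main obstacle is the first step: the $H$-equivariant Wedderburn-Malcev splitting $\iota$ requires upgrading the classical vanishing of Hochschild $2$-cohomology for separable $B$ to an $H$-equivariant vanishing, which is exactly what the Cohen-Fishman theorem provides; once $\iota$ is in hand, the bimodule splitting and passage to the completion are routine.
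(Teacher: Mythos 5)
Your proposal is correct, and its skeleton is the same as the paper's proof: produce an $H$-equivariant splitting of $\hat J \twoheadrightarrow \hat J/\hat J^2$, feed it into the universal property of the tensor algebra and then of the completion, and conclude that the resulting filtered map $\widehat{T}_B(\hat J/\hat J^2)\to\widehat{T}_B(V)$ is an isomorphism, with $\hat J/\hat J^2\cong V$ as $B$-bimodules closing the argument. The genuine difference is your first step, and it is a substantive addition rather than a detour. The paper's one-paragraph proof averages with the integral of $H$ to split $\hat J \to J/J^2$ as $H$-modules, asserts that this splitting ``is also a morphism of $B$-bimodules,'' and immediately invokes the universal property of $T_B(J/J^2)$; but that invocation needs an $H$-equivariant \emph{algebra} map $B\to\widehat{T}_B(V)$ --- both to make ``$B$-bimodule in $\Rep(H)$'' meaningful for $\hat J$ and to supply the degree-zero part of the algebra map --- and the canonical copy of $B$ inside $\widehat{T}_B(V)$ need not be stable under the given action, which is only known (from the preceding proposition) to preserve $\hat J$ and its powers. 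Your inductive equivariant Wedderburn--Malcev construction of the section $\iota\colon B\to\widehat{T}_B(V)$ (equivariant linear lifts from semisimplicity of $H$, corrected by equivariant Hochschild $2$-coboundaries, which exist because the category of $B$-bimodules in $\Rep(H)$ is semisimple by \cite{CF86}) supplies exactly the ingredient the paper leaves implicit; once $\iota$ is in place, your Cohen--Fishman/Mittag-Leffler splitting of $\hat J\to W$ and the paper's integral-averaging splitting are interchangeable. In short: same route, but yours is the complete write-up, and the paper's argument becomes airtight only after some form of your first step is inserted. Two small points to tidy: your identification of the associated graded with $T_B(W)$ is also what yields $W\cong V$ as $B$-bimodules in $\sVec$, which you should state explicitly since the theorem asks for a graded action on $\widehat{T}_B(V)$ itself, not on $\widehat{T}_B(W)$; and in the Wedderburn--Malcev induction the lifts should be normalized to be unital, which the standard normalized-cochain argument handles.
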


\begin{proof}
First, by applying the integral of $H$ (i.e., the idempotent of the trivial representation of $H$), 
the natural projection $\hat J \to J/J^2$ splits as a map of $H$-modules.  
This is also a morphism of $B$-bimodules, so the universal property of the tensor algebra gives a map of algebras $T_B(J/J^2) \to \widehat{T}_B(V)$, which is also a map of $H$-modules.  Now the codomain of this morphism is complete, so the universal property of completion induces a map of $H$-module algebras $\widehat{T}_B(J/J^2) \to \widehat{T}_B(V)$, which is an isomorphism.  Since $J/J^2 \cong V$ as $B$-bimodules, this completes the proof.
 \end{proof}
 
\begin{corollary} \label{cor:ssT}
For a tensor algebra $T_B(V)$ in $\mathsf{Vec}$ (i.e., finite-dimensional over $\kk$) and a semisimple Hopf algebra $H$, the data of Theorem~\ref{thm:param} with $\C=\Rep(H)$ classifies all actions of $H$ on $T_B(V)$.
\end{corollary}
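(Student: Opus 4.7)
The plan is to reduce to the graded case by invoking Theorem~\ref{thm:complete}. The key observation is that finite-dimensionality of $T_B(V)$ over $\kk$ forces the completion to collapse, so that any action extends trivially to the completion.

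First, I would verify that $\widehat{T}_B(V) = T_B(V)$ under the hypothesis. Since $T_B(V) = B \oplus V \oplus V^{\otimes_B 2} \oplus \cdots$ is finite-dimensional over $\kk$, only finitely many summands are nonzero, so $V^{\otimes_B n} = 0$ for $n$ sufficiently large. Consequently the ideal $J$ generated by $V$ satisfies $J^n = 0$, and the inverse system $\cdots \to T_B(V)/J^3 \to T_B(V)/J^2 \to B$ stabilizes, giving $\widehat{T}_B(V) \cong T_B(V)$ as $\kk$-algebras.

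Next, given any action of $H$ on $T_B(V)$, I would transport it along this identification to get an $H$-action on $\widehat{T}_B(V)$. By Theorem~\ref{thm:complete}, such an action is isomorphic (as an $H$-module algebra) to a graded $H$-action on $\widehat{T}_B(V)$; tracing through the proof, this graded action is realized on $\widehat{T}_B(W)$ where $W := J/J^2$ is endowed with its natural $H$-module and $B$-bimodule structure. Since $W \cong V$ as $B$-bimodules in $\mathsf{Vec}$ and $J$ is nilpotent, $\widehat{T}_B(W) \cong T_B(W)$, yielding an isomorphism $T_B(V) \cong T_B(W)$ of $H$-module algebras where the right-hand side carries a grade-preserving $H$-action.

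Finally, a graded $H$-action makes $B$ an algebra and $W$ a $B$-bimodule in $\mathsf{Rep}(H)$, so $T_B(W)$ becomes a $\mathsf{Rep}(H)$-tensor algebra in the sense of Definition~\ref{S,E}. Theorem~\ref{thm:param} then classifies such objects up to equivalence via pairs $(\mathcal{M}, [U])$ with $\mathcal{M}$ an exact $\mathsf{Rep}(H)$-module category and $[U]$ a conjugacy class of objects in $\mathsf{Rep}(H)_\mathcal{M}^*$. I expect the main subtlety to be purely conceptual rather than technical: the isomorphism produced by Theorem~\ref{thm:complete} is only as $H$-module algebras, not as graded algebras, so one must be careful that the original (possibly non-graded) action and the graded action on $T_B(W)$ define the same equivalence class of $\mathsf{Rep}(H)$-tensor algebra. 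This is immediate from the definitions, since equivalence in Definition~\ref{def:equiv} is invariant under $H$-equivariant algebra isomorphism that respects the base-algebra/bimodule decomposition, and the isomorphism constructed in Theorem~\ref{thm:complete} is built precisely from such data.
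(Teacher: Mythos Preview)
Your proposal is correct and follows essentially the same approach as the paper: the paper's proof is a two-line observation that finite-dimensionality forces $T_B(V)=\widehat{T}_B(V)$, so Theorem~\ref{thm:complete} applies directly. Your version simply expands the details (nilpotence of $J$, the identification $W\cong V$, and the connection to Definition~\ref{S,E}) and adds a discussion of the compatibility with Definition~\ref{def:equiv} that the paper leaves implicit; none of this deviates from the paper's route.
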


\begin{proof}
When $T_B(V)$ is finite-dimensional over $\kk$, it is equal to the degree-completed tensor algebra and the theorem above applies.
\end{proof}

\begin{remark}
Every ${\hat J}$-preserving $H$-action on $T_B(V)$ extends to $\widehat{T}_B(V)$ by continuity, so the data of Theorem~\ref{thm:param} with $\C=\Rep(H)$ can be used to gain some information about $H$-actions of $T_B(V)$ in general.  However, it may be that two nonisomorphic \red{actions  become} isomorphic, or equivalent in the sense of Definition~\ref{def:equiv}, upon extension to the completion (although we do not know an example). So, we do not immediately get a classification.
\end{remark}

\begin{remark}
All results of this section also hold when $\kk$ has positive characteristic, if we impose the additional assumption that $H$ is cosemisimple.
\end{remark}

%%%%%%%%%%%%%%%%%
%%%%%%%%%%%%%%%%%
%%%%%%%%%%%%%%%%%

\section{Tensor algebras in pointed fusion categories} \label{sec:VecGw}

The goal of this section is to study  minimal $\mathcal{C}$-tensor algebras [Definition~\ref{S,E},~\ref{def:minl}] for the pointed fusion categories $\mathcal{C} = {\sf Vec}_G^\omega$.  Due to Theorem~\ref{thm:param}, we first recall results on the classification of indecomposable semisimple ${\sf Vec}_G^\omega$-module categories and on the categories of ${\sf Vec}_G^\omega$-module functors between such module categories (bimodule categories); this is done in Section~\ref{sec:OstNat}. We use this in Section~\ref{sec:VecGw-tensor} to study ${\sf Vec}_G^\omega$-tensor algebras and provide several detailed examples of Theorem~\ref{thm:param} there. 

\brk To classify minimal, faithful $\C$-tensor algebras for a given $\C$, Remark~\ref{rem:S-faithful} allows us to restrict our attention to base algebras with either one or two indecomposable summands. We proceed as such here.

%%%%%%%%%%%%%%%%%

\subsection{Module categories and bimodule categories over ${\sf Vec}_G^\omega$} \label{sec:OstNat} We begin by considering  pointed fusion categories, each of which is equivalent to some category ${\sf Vec}_G^\omega$.
Here, $G$ is a finite group with 3-cocycle \linebreak $\omega\colon G \times G \times G \to \kk^\times$, and objects of {\sf Vec}$_G^\omega$  consist of finite-dimensional $G$-graded $\kk$-vector spaces with associativity constraint determined by $\omega$.
Without loss of generality we assume all $\kk^\times$-cochains are normalized, meaning that they take the value 1 when any coordinate of the input is the group identity.

\begin{notation}[${}^g x$, ${}^g X$, $\psi^g$, $\eta|_L$]\label{not:S4}
We collect a list of the most frequently used notation below.
\begin{itemize}
\item We write ${}^g x := gxg^{-1}$ and ${}^g X:=\{{}^gx \colon x \in X\}$, for an element $g \in G$.
\item Take a 2-cochain $\psi$ on $L \leq G$ and an element $g \in G$. The 2-cochain $\psi^g$ on $L$ is defined by $\psi^g(g_1,g_2) = \psi({}^g g_1, {}^g g_2)$ for $g_1,g_2 \in L$.
\item Let $\eta$ be an $n$-cochain on $G$ and $L \leq G$.  We write $\eta|_{L}$ for the restriction of $\eta$ to $L^{\times n}$ for $n \in \mathbb{N}$.
\end{itemize}
\end{notation}

The following module categories over ${\sf Vec}_G^\omega$ play a central role throughout this work.
Let $G$ be a finite group and $\omega$ a 3-cocycle on $G$.  To a pair $(L,\psi)$ where $L\leq G$ and $\psi$ a 2-cochain on $L$ satisfying $d\psi = \omega|_L$, we assign the indecomposable semisimple ${\sf Vec}_G^\omega$-module category $\mathcal{M}(L, \psi)$ as follows. 
First consider the collection of non-isomorphic simple objects  $\{\delta_{g}\}_{g \in G}$ of ${\sf Vec}_G^\omega$, where $\delta_{g}$ is the 1-dimensional $G$-graded vector space concentrated in degree~$g$.

\begin{definition} [$\mathcal{M}(L, \psi)$, $A(L, \psi)$]
\cite[Example~2.1]{OstrikIMRN2003}  \cite[Example~9.7.2]{EGNO} \label{def:M(La,al)} 
The {\it twisted group algebra} $A(L, \psi)$ in ${\sf Vec}_G^\omega$ is $\bigoplus_{g \in L} \delta_{g}$ as an object in ${\sf Vec}_G^\omega$, with multiplication $\delta_{g} \otimes \delta_{g'} = \psi(g,g') \delta_{gg'}$. We define the ${\sf Vec}_G^\omega$-module category $\mathcal{M}(L, \psi) :={\sf Mod}_{{\sf Vec}_G^\omega}\text{-}A(L, \psi)$.
\end{definition}

If $\omega$ is trivial, then $A(L, \psi)$ is an associative algebra in ${\sf Vec}$. But in general  $A(L, \psi)$ is  an associative algebra only in $\sVec_G^\omega$. 
The following fundamental results of Ostrik and Natale tell us that these are the building blocks of all module categories over ${\sf Vec}_G^\omega$ (up to equivalence), and they give us a criterion for checking when two such module categories are equivalent.

\begin{proposition} \label{prop:OstNat} \cite[Example~2.1]{OstrikIMRN2003} \cite{Natale2016} 
Every indecomposable semisimple module category over the fusion category ${\sf Vec}_G^\omega$ is equivalent to some $\M(L,\psi)$, where
\begin{itemize}
\item $L$ is a subgroup of $G$ such that the class of $\omega|_{L}$ is trivial in $H^3(L, \kk^\times)$, and 
\item $\psi\colon L \times L \to \kk^\times$ is a 2-cochain on $L$ satisfying $d\psi = \omega|_{L}$. 
\end{itemize} 
Moreover, $\M(L, \psi)$ and $\M(L', \psi')$ are equivalent as ${\sf Vec}_G^\omega$-module categories if and only if there exists $g \in G$ such that
$L = {}^g L'$ and the class of the 2-cocycle $\psi'^{-1}\psi^g \Omega_g|_{L'}$ is trivial in $H^2(L', \kk^\times)$, where $\psi^g$ is as in Notation \ref{not:S4}.
Here, 
$\Omega_g\colon G \times G \to \kk^\times$ is given by
\begin{equation}\label{eq:Omegag}
\Omega_g(g_1,g_2) = \frac{\omega({}^g g_1, {}^g g_2, g) \; \omega(g, g_1, g_2)}{\omega({}^g g_1, g, g_2)}.
\end{equation}
\vspace{-.3in}

\qed
\end{proposition}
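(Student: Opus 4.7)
The plan is to classify indecomposable semisimple module categories over $\sVec_G^\omega$ by extracting the pair $(L,\psi)$ from a distinguished simple object via the $G$-action induced by the invertible simples $\delta_g$, and then to derive the equivalence criterion by tracking how this data transforms under change of basepoint.

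For existence of $(L,\psi)$, I would first invoke Theorem~\ref{thm:ModcA} to reduce to showing that any indecomposable semisimple algebra $A$ in $\sVec_G^\omega$ is Morita equivalent to some $A(L,\psi)$. Since every simple $\delta_g \in \sVec_G^\omega$ is invertible, the assignment $M \mapsto \delta_g \otimes M$ defines a $G$-action on $\Irr(\M)$ for $\M := \Mod_{\sVec_G^\omega}$-$A$, transitive by indecomposability. Fix $M_0 \in \Irr(\M)$ and set $L := \{\ell \in G : \delta_\ell \otimes M_0 \cong M_0\}$. For each $\ell \in L$, pick a normalized isomorphism $\varphi_\ell\colon \delta_\ell \otimes M_0 \xto{\sim} M_0$, and define $\psi \in C^2(L,\kk^\times)$ as the unique scalar satisfying
\[
\varphi_{\ell_1} \circ (\id_{\delta_{\ell_1}} \otimes \varphi_{\ell_2}) = \psi(\ell_1,\ell_2)\, \varphi_{\ell_1\ell_2} \circ a^{\M}_{\delta_{\ell_1},\delta_{\ell_2},M_0}.
\]
The module pentagon applied to $(\delta_{\ell_1},\delta_{\ell_2},\delta_{\ell_3},M_0)$ then yields $d\psi = \omega|_L$, so in particular $[\omega|_L]$ is trivial in $H^3(L,\kk^\times)$. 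Identifying the internal End algebra $\EEnd(M_0) \in \sVec_G^\omega$ with $A(L,\psi)$ via the $\varphi_\ell$ gives the equivalence $\M \simeq \M(L,\psi)$.

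For the equivalence criterion, suppose $F\colon \M(L,\psi) \xto{\sim} \M(L',\psi')$ is a $\sVec_G^\omega$-module equivalence. Tracking the image of the canonical base simple shows it has the form $\delta_g \otimes M_0'$ for some $g \in G$; the direct computation $\text{Stab}(\delta_g \otimes M_0') = {}^g L'$, matched with the stabilizer of the source basepoint, forces $L = {}^g L'$. The technical core is to then extract the 2-cochain associated with this new basepoint: one defines new isomorphisms $\delta_\ell \otimes (\delta_g \otimes M_0') \xto{\sim} \delta_g \otimes M_0'$ for $\ell \in L$ from the $\varphi'_{{}^{g^{-1}}\ell}$ and the associativity constraints of $\sVec_G^\omega$, and applies the pentagon repeatedly to the quadruple $(\delta_{\ell_1},\delta_{\ell_2},\delta_g,M_0')$; the scalars from $\omega$ accumulated in the process telescope precisely into the expression $\Omega_g$ of~\eqref{eq:Omegag}. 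This shows the new cochain, transported back to $L'$, is $\psi^g\cdot\Omega_g|_{L'}$, and since twisted group algebras on $L'$ produce equivalent module categories in $\sVec_G^\omega$ iff their defining cochains differ by a coboundary in $H^2(L',\kk^\times)$ (an $H^2$ classification of invertible $A(L',\cdot)$-bimodules), the criterion $[\psi'^{-1}\psi^g\Omega_g|_{L'}] = 1$ follows. The converse is obtained by reversing this recipe to build an explicit module equivalence from the data of $g$ and a coboundary witness.

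The main obstacle will be the pentagon-bookkeeping in the derivation of $\Omega_g$: one must precisely track how the three associator values $\omega({}^g\ell_1,{}^g\ell_2,g)$, $\omega(g,\ell_1,\ell_2)$, and $\omega({}^g\ell_1,g,\ell_2)$ emerge in two different rebracketings of $\delta_{\ell_1} \otimes \delta_{\ell_2} \otimes \delta_g \otimes M_0'$ and confirm that their combination coincides with $\Omega_g(\ell_1,\ell_2)$ as in~\eqref{eq:Omegag}. All remaining steps are formal consequences of Theorem~\ref{thm:ModcA} and the standard correspondence between $\C$-module categories and algebras in $\C$ up to Morita equivalence.
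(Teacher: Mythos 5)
The paper itself offers no proof of Proposition~\ref{prop:OstNat}: it is quoted from \cite{OstrikIMRN2003} and \cite{Natale2016} and stamped with a \textsl{qed}. So your proposal is really being measured against those external proofs, and your overall strategy is exactly theirs: use Theorem~\ref{thm:ModcA} to reduce to internal End algebras, extract $(L,\psi)$ from a chosen simple $M_0$ via the transitive action of the invertible objects $\delta_g$ on $\Irr(\M)$ together with the module pentagon, identify $\EEnd(M_0)\cong A(L,\psi)$, and then derive the equivalence criterion by tracking where a module equivalence sends the basepoint. The existence half of your argument is correct as written.

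The gap is in the final step of the forward direction of the equivalence criterion. You invoke the lemma that ``twisted group algebras on $L'$ produce equivalent module categories in $\sVec_G^\omega$ iff their defining cochains differ by a coboundary in $H^2(L',\kk^\times)$.'' This statement is false, and its failure is precisely the reason $\Omega_g$ appears in the proposition at all: by Natale's Example~3.6 (recalled in the paper in the remark following Lemma~\ref{lem:Omega}), there exist pairs $(L,\psi)$, $(L,\psi')$ with $\psi^{-1}\psi'$ \emph{not} a coboundary but $\M(L,\psi)\simeq\M(L,\psi')$; the equivalence is implemented by some $g$ normalizing $L$ for which $\Omega_g|_{L}$ is cohomologically nontrivial. (Historically, this is exactly the point at which Ostrik's original criterion required Natale's correction, so building the corrected criterion on top of the uncorrected one is circular.) What is true, and all you actually need, is the much weaker statement that the cochain extracted at a \emph{single fixed} simple object is well defined up to coboundary (rescaling the chosen isomorphisms $\varphi_\ell$ changes $\psi$ by an exact cochain); equivalently, $A(L',\phi_1)\cong A(L',\phi_2)$ as algebras in $\sVec_G^\omega$ with the same underlying object iff $\phi_1\phi_2^{-1}$ is a coboundary. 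The repair is to keep both comparisons at basepoints and never pass through the false global lemma: (i) inside $\M(L',\psi')$, independently of the equivalence $F$, your pentagon bookkeeping relates the cochain $\tau$ extracted at $\delta_g\otimes M_0'$ to the cochain $\psi'$ extracted at $M_0'$ by $\tau^{g}=\psi'\cdot\Omega_g^{\pm 1}|_{L'}$; (ii) since $F(M_0)\cong\delta_g\otimes M_0'$ and $F$ is a $\sVec_G^\omega$-module equivalence, $\tau$ is cohomologous to $\psi$ on $L={}^gL'$, hence $\tau^{g}$ is cohomologous to $\psi^{g}$ on $L'$. Combining (i) and (ii) yields $[\psi'^{-1}\psi^{g}\Omega_g|_{L'}]=1$ directly, and your converse (reversing the recipe to build an equivalence from $g$ and a coboundary witness) then goes through as sketched. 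As written, however, the forward implication rests on a premise that the proposition itself is designed to refute, so it does not stand without this repair.
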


\begin{remark} \label{rmk:RepGam}
Indecomposable semisimple module categories over {\sf Rep}($G$) are determined by the same data of Proposition~\ref{prop:OstNat} for ${\sf Vec}_G$.
Namely, {\sf Rep}($G$) and ${\sf Vec}_G$ are categorically Morita equivalent since \linebreak $({\sf Vec}_G)^*_{{\sf Vec}} \sim {\sf Rep}(G)$ \cite[Example~7.12.19]{EGNO}, and as a result, there is a 2-equivalence between the 2-category of semisimple module categories over ${\sf Vec}_G$ and over ${\sf Rep}(G)$ [Theorem~\ref{thm:2categ}]. See \cite[Proposition~2.3]{OstrikIMRN2003}, a result due to M\"{u}ger~\cite{Muger}, and also \cite[Example~7.4.9, Corollary~7.12.20]{EGNO} for direct descriptions of indecomposable semisimple module categories over $\Rep(G)$.
\end{remark}

We summarize some elementary observations in the following lemma which are useful for applying the proposition above in specific examples.
\begin{lemma} \label{lem:Omega}
Suppose $L \leq G$ is a subgroup such that $\omega|_{L}$ is trivial in $H^3(L, \kk^\times)$.  Then the following hold.
\begin{enumerate}
\item[\textnormal{(a)}] For every conjugate ${}^g L$ of $L$ in $G$, we have that $\omega|_{{}^gL}$ is trivial in $H^3({}^gL, \kk^\times)$.
\smallskip

\item[\textnormal{(b)}] If $H^2(L', \kk^\times)$ is trivial, then the pairs $(L', \psi')$ yielding module categories equivalent to that of $(L, \psi)$ are exactly those with $L = {}^g L' $ for some $g \in G$.
\end{enumerate}
\end{lemma}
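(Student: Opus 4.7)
The plan is to handle (a) by an explicit construction of a trivializing 2-cochain on ${}^g L$, and (b) by direct application of the equivalence criterion in Proposition~\ref{prop:OstNat}.

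For (a), given $\psi\colon L \times L \to \kk^\times$ with $d\psi = \omega|_L$ and a fixed $g \in G$, I would produce a 2-cochain on ${}^g L$ trivializing $\omega|_{{}^g L}$ by transporting $\psi$ under conjugation by $g^{-1}$ and correcting by the cochain $\Omega_{g^{-1}}$ from \eqref{eq:Omegag}: set
$$\tilde\psi(h_1, h_2) \;:=\; \psi({}^{g^{-1}} h_1, {}^{g^{-1}} h_2) \cdot \Omega_{g^{-1}}(h_1, h_2), \qquad h_1, h_2 \in {}^g L.$$
A direct expansion yields $d\psi^{g^{-1}}(h_1,h_2,h_3) = \omega({}^{g^{-1}}h_1, {}^{g^{-1}}h_2, {}^{g^{-1}}h_3)$, so the remaining step is to verify the cocycle identity
$$d\Omega_{g^{-1}}(h_1, h_2, h_3) \;=\; \omega(h_1, h_2, h_3)\,/\,\omega({}^{g^{-1}} h_1, {}^{g^{-1}} h_2, {}^{g^{-1}} h_3).$$
Granting this, the two factors multiply to give $d\tilde\psi = \omega|_{{}^g L}$, as required. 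The identity (equivalently $d\Omega_g = \omega/\omega^g$ for arbitrary $g$) encodes the triviality of the inner-automorphism action on $H^3(G, \kk^\times)$ at the cocycle level, and follows from the 3-cocycle condition for $\omega$ applied to suitable 4-tuples involving $g^{-1}$.

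For (b), assume $H^2(L', \kk^\times)$ is trivial. Proposition~\ref{prop:OstNat} asserts that $\M(L, \psi) \sim \M(L', \psi')$ iff there exists $g \in G$ with $L = {}^g L'$ and the class $[\psi'^{-1} \psi^g \Omega_g|_{L'}]$ is trivial in $H^2(L', \kk^\times)$. The cocycle computations from (a) show that $\psi'^{-1} \psi^g \Omega_g|_{L'}$ is always a 2-cocycle on $L'$ (its differential cancels to $1$), so the cohomological condition is vacuous under the hypothesis. The criterion therefore reduces to the single requirement that $L$ be a $G$-conjugate of $L'$; and for any such $L'$, part (a) guarantees the existence of some $\psi'$ on $L'$ with $d\psi' = \omega|_{L'}$, so the module category $\M(L', \psi')$ is indeed available for the comparison.

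The hard part will be the identity $d\Omega_g = \omega/\omega^g$ that powers (a). Although it is conceptually transparent, the verification is an unforgiving cocycle computation: one must unwind the three $\omega$-factors in \eqref{eq:Omegag}, produce the six resulting $\omega$-factors in the differential, and apply the 3-cocycle relation for $\omega$ to several 4-tuples containing $g$, while carefully tracking the order of arguments.
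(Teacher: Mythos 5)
Your proposal is correct and takes essentially the same route as the paper: for (a) the paper uses exactly the cochain $\psi^{g^{-1}}\Omega_{g^{-1}}$ on ${}^g L$ and asserts, just as you do modulo the deferred identity $d\Omega_{g^{-1}} = \omega/\omega^{g^{-1}}$, that "it can be directly checked" that $d(\psi^{g^{-1}}\Omega_{g^{-1}}) = \omega|_{{}^g L}$; for (b) the paper likewise notes the condition on $\psi'^{-1}\psi^g\Omega_g|_{L'}$ in Proposition~\ref{prop:OstNat} is vacuous when $H^2(L',\kk^\times)$ is trivial. Your splitting of the verification into the conjugation part $d(\psi^{g^{-1}}) = \omega^{g^{-1}}$ plus the coboundary identity for $\Omega_{g^{-1}}$ is merely a more explicit organization of the computation the paper leaves to the reader.
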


\begin{proof}
(a) Let $\psi \in H^2(L, \kk^\times)$ be such that $d\psi = \omega|_L$.  It can be directly checked that $d(\psi^{g^{-1}} \Omega_{g^{-1}}) = \omega|_{{}^g L}$.

\smallskip

(b) It is immediate since the condition comparing $\psi, \psi'$ in Proposition \ref{prop:OstNat} is vacuous in this case.
\end{proof}

\begin{remark}
We also draw the reader's attention to Natale's example \cite[Example 3.6]{Natale2016}, which shows that the 2-cocycle $\Omega_g$ must be considered when computing equivalence classes of indecomposable semisimple module categories: Even when $\psi^{-1} \psi'$ is not a coboundary, it is possible for $(L,\psi)$ and $(L,\psi')$ to yield equivalent module categories. In order to get $(L, \psi) \not \sim (L, \psi')$, one must check for each $g \in G$ with ${}^g L = L$ that $\psi'^{-1}\psi^g \Omega_g|_{L}$ is nontrivial in $H^2(L, \kk^\times)$.
\end{remark}

Now to describe the minimal ${\sf Vec}_G^\omega$-tensor algebras  $T_S(E)$ up to equivalence, we may take $S$ to be a direct sum of twisted group algebras $A(L, \psi)$ in ${\sf Vec}_G^\omega$.

\begin{example} \label{extreme-Gam}
Continuing Example~\ref{ex:extremes}, note that if the pair $(L, \psi)$ corresponds to the ${\sf Vec}_G^\omega$-module category ${\sf Vec}_G^\omega$ itself, then $(L, \psi) = (\langle e \rangle, 1)$. 
\end{example}

Next, we consider the generating bimodules of minimal $\mathsf{Vec}_G^\omega$-tensor algebras by recalling a result of Ostrik that classifies categories of functors between indecomposable semisimple $\mathsf{Vec}_G^\omega$-module categories. By Remark \ref{rem:S-faithful}, it is enough to consider $(S_i, S_j)$-bimodules, where $S_i:=A(L_i, \psi_i)$ and $S_j:=A(L_j, \psi_j)$ are indecomposable semisimple algebras. Also for the result below, we follow \cite[Section~3.1]{Karp2} and call a projective representation of a group $G$ with Schur multiplier $\psi$ a {\it $\psi$-representation of $G$}.

\begin{notation}[${L}^g_{i,j}$]\label{not:intersection}
Given two subgroups $L_i, L_j \leq G$ and $g \in G$, we write $L^g_{i,j}:= L_i \cap g L_j g^{-1}$.
\end{notation}

\begin{proposition}[$\psi^g_{i,j}=:(\psi_i\psi_j^{-1})_g$, $m_{i,j}(g)$, $M(g, \rho)$] \label{prop:Fun-Vec} \cite[Proposition~3.2]{OstrikIMRN2003} \cite[Theorem~5.1]{GN}
\linebreak Let $(L_i, \psi_i)$, $(L_j, \psi_j)$ be two pairs  determining indecomposable semisimple $\mathsf{Vec}_G^\omega$-module categories as in Proposition~\ref{prop:OstNat}.
For each $g \in G$, the group $L^g_{i,j}$ has a well-defined 2-cocycle
\begin{equation}\label{eq:aijg}
\psi^g_{i,j}(\ell,\ell'):=(\psi_i\psi_j^{-1})_g(\ell,\ell'):=\, 
\psi_i(\ell,\ell')\,\cdot\,\psi_j(g^{-1}\ell'^{-1}g,\, g^{-1}\ell^{-1}g)\,\cdot
\frac{\omega(\ell, \ell', g)\, \cdot\, \omega(\ell,\, \ell'g,\, g^{-1} \ell'^{-1}g)}
{\omega(\ell\ell'g,\, g^{-1}\ell'^{-1}g,\, g^{-1}\ell^{-1}g)},
\end{equation}
for $\ell, \ell' \in L_{i,j}^g$. 

\brk
Moreover, let $\{g_k\}_{k \in L_i \backslash G / L_j}$ be a set of representatives of $L_i$-$ L_j$ double cosets in $G$. Then the rank of the category ${\sf Fun}_{\mathsf{Vec}_G^\omega}(\M(L_i, \psi_i), \M(L_j, \psi_j))$, or equivalently of the category of  $(A(L_i,{\psi_i}), A(L_j,{\psi_j}))$-bimodules in $\mathsf{Vec}_G^\omega$, is equal to 
\begin{equation}\label{eq:arrow}\textstyle \sum_{k \in L_i \backslash G / L_j}\; m_{i,j}(g_k)
\end{equation}
where $m_{i,j}(g)$ is the \red{number} of linear equivalence classes of irreducible $\psi^g_{i,j}$-representations of the group $L^g_{i,j}$.

\brk
We get that the simple objects of this category are classified by the pairs $(g_k, \rho)$, where $\rho$ is an irreducible $\psi_{i,j}^{g_k}$-representation of the group $L_{i,j}^{g_k}$.  The corresponding simple object $M(g_k,\rho) \in \mathsf{Vec}_G^\omega$ is supported \red{on the double coset} $L_i g_k L_j$.
\qed
 \end{proposition}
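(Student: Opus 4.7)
The plan is to use the equivalence $\mathsf{Fun}_{\mathsf{Vec}_G^\omega}(\M(L_i,\psi_i),\M(L_j,\psi_j)) \simeq \mathsf{Bimod}_{\mathsf{Vec}_G^\omega}(A(L_i,\psi_i),A(L_j,\psi_j))$ recalled after Proposition \ref{prop:bimod}, and then to describe such bimodules concretely in terms of their $G$-gradings. Write $A_i := A(L_i,\psi_i)$ and $A_j := A(L_j,\psi_j)$. A bimodule $M \in \mathsf{Vec}_G^\omega$ decomposes as $M = \bigoplus_{h \in G} M_h$, and the left $A_i$-action restricts to maps $\delta_\ell \otimes M_h \to M_{\ell h}$ for $\ell \in L_i$, while the right $A_j$-action restricts to maps $M_h \otimes \delta_{\ell'} \to M_{h\ell'}$ for $\ell' \in L_j$. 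Hence the support of $M$ is a union of double cosets $L_i g L_j$, and indecomposable (in particular, simple) bimodules have support contained in a single double coset. This already reduces the problem to fixing a representative $g_k \in L_i \backslash G / L_j$ and classifying simple bimodules supported on $L_i g_k L_j$.

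Fix such a representative $g$ and consider a simple bimodule $M$ supported on $L_i g L_j$. I would show that $M$ is recovered from the single homogeneous component $M_g$ together with its structure over the ``stabilizer'' subgroup $L^g_{i,j} = L_i \cap g L_j g^{-1}$. Concretely, an element $\ell \in L_i$ together with $\ell' \in L_j$ satisfies $\ell g \ell' \in g L_j$ iff $\ell \in L^g_{i,j}$, in which case $\ell' = g^{-1}\ell^{-1}g \cdot g^{-1}\ell g \ell' \in L_j$ is uniquely determined up to the right $L_j$-action; combining the left action of $\ell$ and the right action of $g^{-1}\ell^{-1}g$ gives an endomorphism of $M_g$. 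One then checks that this makes $M_g$ into a $\psi^g_{i,j}$-projective representation of $L^g_{i,j}$, where the cocycle $\psi^g_{i,j}$ is forced by composing the left action (contributing $\psi_i$), the right action (contributing the inverse twist $\psi_j(g^{-1}\ell'^{-1}g, g^{-1}\ell^{-1}g)$), and the associativity constraints in $\mathsf{Vec}_G^\omega$ that are needed to bracket the expression $\delta_{\ell}\otimes(\delta_{\ell'}\otimes M_g)\otimes \delta_{g^{-1}\ell^{-1}g}\otimes\delta_{g^{-1}\ell'^{-1}g}$ into canonical form. The $\omega$-factors collected in this normalization are precisely the fraction appearing in \eqref{eq:aijg}, which verifies the formula.

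The assignment $M \mapsto M_g$ should then be shown to be an equivalence between the full subcategory of bimodules supported on $L_i g L_j$ and the category of $\psi^g_{i,j}$-representations of $L^g_{i,j}$, with quasi-inverse the induced bimodule $N \mapsto A_i \otimes_{\kk[L^g_{i,j}]^{\psi^g_{i,j}}} N \otimes_? A_j$-type construction (which only uses the existence of the projective $L^g_{i,j}$-action to be well-defined modulo $\omega$). Under this equivalence, simple bimodules correspond to irreducible $\psi^g_{i,j}$-representations. Since simples on different double cosets are non-isomorphic, summing over $k \in L_i\backslash G/L_j$ and using the number $m_{i,j}(g_k)$ of irreducible $\psi^{g_k}_{i,j}$-representations of $L^{g_k}_{i,j}$ gives the rank formula~\eqref{eq:arrow} and the parametrization of simples by pairs $(g_k,\rho)$.

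The main obstacle is the bookkeeping that produces the explicit cocycle $\psi^g_{i,j}$ in~\eqref{eq:aijg}: one needs to choose a consistent set of brackets for expressions involving five or six tensor factors in $\mathsf{Vec}_G^\omega$, apply the pentagon repeatedly, and verify that the resulting function on $L^g_{i,j}\times L^g_{i,j}$ is a 2-cocycle (rather than merely a 2-cochain) and that it depends on $g$ only through its double coset up to cohomology. Everything else—decomposition by support, the induction/restriction equivalence, and the count of simples—is formal once the cocycle has been identified.
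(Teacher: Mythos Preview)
The paper does not prove this proposition at all: it is stated with citations to Ostrik and Gelaki--Naidu and closed with a \qed, so there is no argument to compare against. Your sketch is essentially the standard proof one finds in those references---decompose an $(A_i,A_j)$-bimodule by its $G$-support into double-coset pieces, observe that on a fixed coset $L_i g L_j$ the homogeneous component $M_g$ carries a projective action of the stabilizer $L^g_{i,j}$, and recover the cocycle from the associator bookkeeping---so your approach is correct and is the expected one.

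One small caution: your inverse functor ``$A_i \otimes_{\kk[L^g_{i,j}]^{\psi^g_{i,j}}} N \otimes_? A_j$'' is written too loosely to compile into an actual proof; in the twisted setting you need to be careful that the induced object really lives in $\mathsf{Vec}_G^\omega$ (the $\omega$-associator must be absorbed consistently), and the cleanest way to do this is to first check that $\delta_g \otimes A_j$, with its obvious right $A_j$-action and the left $L^g_{i,j}$-action you describe, is a well-defined $(\kk_{\psi^g_{i,j}}[L^g_{i,j}],A_j)$-bimodule in $\mathsf{Vec}_G^\omega$, and then induce on the left from $L^g_{i,j}$ to $L_i$. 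With that adjustment the equivalence goes through, and the verification that $\psi^g_{i,j}$ is a genuine $2$-cocycle (and that its class depends only on the double coset of $g$) is, as you say, a pentagon computation.
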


We introduced two notations above for the same 2-cocycle because the first is more convenient in this section, while the latter notation is more convenient in Sections~\ref{sec:tengt} and~\ref{sec:Path}.

\brk
Now to understand the rank count above,
recall that for a 2-cocycle $\phi$ on a group $G$, an element $g \in G$ is called $\phi$-regular when $\phi(g,h) = \phi(h,g)$ for all $h \in C_G(g)$, the centralizer of $g$ in $G$ \cite[Section~2.6]{Karp2}.
Furthermore, $\phi$-regularity of an element $g$ depends only on the cohomology class of $\phi$, and if $g$ is $\phi$-regular, then so is every conjugate of $g$ \cite[Lemma~2.6.1]{Karp2}.
The following result of Schur can be found in \cite[Theorem~6.1.1]{Karp2}.

\begin{theorem}[Schur] \label{thm:schur}
Let $L$ be a group and $\psi \in Z^2(L, \kk^\times)$ a 2-cocycle.
Then the number of linear equivalence classes of irreducible $\psi$-representations of $L$
is equal to the \red{number} of $\psi$-regular conjugacy classes of~$L$.
 \qed
 \end{theorem}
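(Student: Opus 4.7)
The plan is to replace $\psi$-representations of $L$ by honest modules over the \emph{twisted group algebra} $\kk^{\psi}L$ and count its simples via the dimension of its center. Concretely, I form $\kk^{\psi}L$ as the $\kk$-vector space with basis $\{u_g\}_{g\in L}$ and multiplication $u_g u_h=\psi(g,h)\,u_{gh}$; the cocycle condition $d\psi=1$ (and the normalization of $\psi$) gives associativity and a unit $u_e$. A projective $\psi$-representation of $L$ on $V$ is exactly the data of a $\kk^{\psi}L$-module structure on $V$, and linear equivalence of $\psi$-representations corresponds to isomorphism of $\kk^{\psi}L$-modules. Since $\kk$ has characteristic zero and $L$ is finite, a standard averaging (Maschke-type) argument shows $\kk^{\psi}L$ is semisimple, so by Artin--Wedderburn over the algebraically closed field $\kk$ the number of isomorphism classes of simple $\kk^{\psi}L$-modules equals $\dim_{\kk} Z(\kk^{\psi}L)$.

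Next I would build a basis of $Z(\kk^{\psi}L)$ indexed by the $\psi$-regular conjugacy classes. Direct computation with the cocycle identity yields a scalar $\lambda(h,g)\in\kk^{\times}$ such that
\begin{equation*}
u_h\, u_g\, u_h^{-1} \;=\; \lambda(h,g)\, u_{hgh^{-1}}.
\end{equation*}
For a fixed conjugacy class $C\subseteq L$ and coefficients $\alpha_x\in\kk^{\times}$ ($x\in C$), the twisted class sum $z_C:=\sum_{x\in C}\alpha_x u_x$ lies in $Z(\kk^{\psi}L)$ iff $\alpha_{hgh^{-1}}=\lambda(h,g)\,\alpha_g$ for all $g\in C$, $h\in L$. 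Restricting $h$ to the centralizer $C_L(g)$, this consistency forces $\lambda(h,g)=1$ for $h\in C_L(g)$, which is precisely the statement that $g$ is $\psi$-regular (i.e.\ $\psi(g,h)=\psi(h,g)$ on $C_L(g)$). When this holds, the cocycle identity shows that $\lambda(\,-\,,g)$ is a well-defined 1-cochain on the coset space $L/C_L(g)$, so the $\alpha_x$'s can be propagated consistently across the orbit $C$; when it fails, no nonzero central element is supported on $C$.

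Finally, since central elements decompose with respect to the basis $\{u_g\}$ into blocks supported on single conjugacy classes (conjugation permutes the $\{u_g\}$ up to scalars), and since we have just shown each block contributes at most one dimension to $Z(\kk^{\psi}L)$, with contribution exactly for the $\psi$-regular classes, the family $\{z_C:C\text{ is a }\psi\text{-regular class}\}$ is a basis of $Z(\kk^{\psi}L)$. Combining with the dimension count of the first paragraph gives the theorem.

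I expect the main obstacle to be the cocycle bookkeeping in the second paragraph: writing $\lambda(h,g)$ explicitly as a ratio of values of $\psi$, checking that $\lambda(\,-\,,g)|_{C_L(g)}$ is trivial exactly under $\psi$-regularity, and verifying that the normalization $\alpha_x$ can be chosen consistently along $C$ independently of the coset representatives chosen in $L/C_L(g)$. Everything else is a routine consequence of semisimplicity and Artin--Wedderburn.
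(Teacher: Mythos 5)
Your proposal is correct, but note that the paper does not actually prove this statement: it is imported as a black box, cited from Karpilovsky \cite[Theorem~6.1.1]{Karp2}, with the \qed\ indicating no argument is given. What you have written is essentially the classical proof that lives in that reference: identify $\psi$-representations with modules over the twisted group algebra $\kk^{\psi}L$, use a Maschke-type averaging (valid since $\mathrm{char}\,\kk=0$; the twisting scalars cancel because $(u_h u_g)\pi(u_h u_g)^{-1}=u_{hg}\pi u_{hg}^{-1}$) to get semisimplicity, invoke Artin--Wedderburn over the algebraically closed $\kk$ to reduce to computing $\dim_\kk Z(\kk^{\psi}L)$, and then show the twisted class sums over $\psi$-regular classes form a basis of the center. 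The cocycle bookkeeping you flag does close up: for $h\in C_L(g)$ one computes $u_h u_g u_h^{-1}=\frac{\psi(h,g)}{\psi(g,h)}\,u_g$, so $\lambda(h,g)\equiv 1$ on $C_L(g)$ is literally $\psi$-regularity of $g$; and the identity $\lambda(h_1h_2,g)=\lambda(h_1,h_2gh_2^{-1})\,\lambda(h_2,g)$ (the $\psi(h_1,h_2)$ factors cancel) shows both that the propagated coefficients $\alpha_{hgh^{-1}}=\lambda(h,g)\alpha_g$ are independent of the coset representative in $L/C_L(g)$ and that the resulting element is genuinely central. Since conjugation permutes the basis $\{u_g\}$ up to scalars within conjugacy classes, the center splits as a direct sum over classes, each $\psi$-regular class contributing exactly one dimension and each non-regular class contributing zero. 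So your argument is complete and self-contained, which is strictly more than the paper offers; it buys independence from the external reference at the cost of about a page of standard algebra.
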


We also introduce some terminology that will be used.

\begin{definition}\label{def:Schurtriv}
A finite group $G$ is called {\it Schur-trivial} if $H^2(G,\kk^\times) = 1$, and we call $G$ {\it sub-Schur-trivial} if each of its subgroups is Schur-trivial.
\end{definition}

%%%%%%%%%%%%%%%%%%%%%%%%%%%

\subsection{${\sf Vec}_G^\omega$-tensor algebras} \label{sec:VecGw-tensor}

Now we study minimal, $S$-faithful ${\sf Vec}_G^\omega$-tensor algebras $T_S(E)$.  By Remark~\ref{rem:S-faithful}, we know that $S$ has at most two summands, and is therefore Morita equivalent to $A(L, \psi)$ or  \linebreak $A(L_i, \psi_i) \oplus   A(L_j, \psi_j)$.

\subsubsection{Trivial $\omega$.} We first consider the case when $\omega$ is cohomologically trivial.  Without loss of generality we assume throughout this section that $\omega=1$, the constant cochain, and note that ${\sf Vec}_G^1={\sf Vec}_G$.  The classifying data for tensor algebras here is the same as in $\Rep(G)$; see Remark~\ref{rmk:RepGam}.

\begin{proposition}
\label{prop:omegatrivial}
Let $[G]$ be a set of representatives of the conjugacy classes of subgroups $L$ of $G$, let $N_G(L)$ be the normalizer of $L$ in $G$, and let $|X/\Gamma|$ be the number of orbits of a group $\Gamma$ acting on a set $X$.
Then the number of indecomposable semisimple base algebras in ${\sf Vec}_G$, up to equivalence, is
\begin{equation}\label{eq:omegatrivialcount}
\sum_{L \in [G]} |H^2(L, \kk^\times) / N_G(L)|.
\end{equation}
\end{proposition}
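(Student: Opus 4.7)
The plan is to combine Theorem~\ref{thm:ModcA} with the $\omega = 1$ case of Proposition~\ref{prop:OstNat}, and then to organize the resulting equivalence relation on pairs $(L, \psi)$ as an orbit count.

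First, by Theorem~\ref{thm:ModcA}, indecomposable semisimple algebras in ${\sf Vec}_G$ (up to Morita equivalence) correspond bijectively to indecomposable semisimple ${\sf Vec}_G$-module categories (up to equivalence). Since $\omega = 1$, Proposition~\ref{prop:OstNat} parametrizes such module categories by pairs $(L, \psi)$ with $L \leq G$ and $\psi \in Z^2(L, \kk^\times)$: the condition $d\psi = \omega|_L$ reduces to $\psi$ being an ordinary 2-cocycle, and the hypothesis that $\omega|_L$ is cohomologically trivial is automatic. Moreover, direct inspection of the formula~\eqref{eq:Omegag} shows $\Omega_g \equiv 1$ when $\omega = 1$, so the equivalence criterion collapses to: $\mathcal{M}(L, \psi) \sim \mathcal{M}(L', \psi')$ if and only if there exists $g \in G$ with $L = {}^g L'$ and $[\psi^g] = [\psi']$ in $H^2(L', \kk^\times)$.

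Next I would organize the count by choosing a representative $L \in [G]$ from each conjugacy class of subgroups of $G$. If $(L, \psi)$ and $(L, \psi')$ are equivalent, then the element $g$ realizing this equivalence must satisfy ${}^g L = L$, i.e., $g \in N_G(L)$. Conjugation by $g \in N_G(L)$ induces an automorphism of $L$, and hence of $H^2(L, \kk^\times)$ via $[\psi] \mapsto [\psi^g]$; inner conjugation (i.e., $g \in L$) acts trivially. Thus the equivalence classes with fixed first coordinate $L$ correspond to $N_G(L)$-orbits in $H^2(L, \kk^\times)$, of which there are $|H^2(L, \kk^\times) / N_G(L)|$. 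Since distinct representatives in $[G]$ correspond to nonconjugate subgroups and hence yield disjoint equivalence classes, summing over $L \in [G]$ produces the formula~\eqref{eq:omegatrivialcount}.

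The argument is essentially bookkeeping; I do not expect a serious obstacle. The main points to verify carefully are the vanishing $\Omega_g \equiv 1$ for $\omega = 1$ (immediate from \eqref{eq:Omegag}), the well-definedness of the $N_G(L)$-action on $H^2(L, \kk^\times)$ (it factors through $N_G(L)/L$, so the orbit count is unambiguous), and the observation that pairs associated to different conjugacy classes of subgroups can never be equivalent, so the partition of the count by $[G]$ is correct.
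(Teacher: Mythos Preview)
Your proposal is correct and follows essentially the same approach as the paper: specialize Proposition~\ref{prop:OstNat} to $\omega = 1$, observe that $\Omega_g \equiv 1$ so the equivalence relation on pairs $(L,\psi)$ reduces to $G$-conjugacy, and then count orbits by first fixing a representative subgroup $L \in [G]$ and then counting $N_G(L)$-orbits on $H^2(L,\kk^\times)$. Your write-up is slightly more detailed (e.g., making explicit the passage from algebras to module categories via Theorem~\ref{thm:ModcA} and noting that the action factors through $N_G(L)/L$), but the argument is the same.
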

\noindent So, if $G$ is sub-Schur-trivial, then \eqref{eq:omegatrivialcount} is equal to $\# [G]$; if, further, $G$ is abelian, then  \eqref{eq:omegatrivialcount} equals $\# (L \leq G)$.
\begin{proof}
We consider Proposition~\ref{prop:OstNat} in the case $\omega=1$, noting that $\omega|_L =1$ on any $L\leq G$.
Therefore, by Proposition~\ref{prop:OstNat} the collection of indecomposable module categories over the fusion category ${\sf Vec}_G$ is parametrized by  conjugacy classes of pairs $(L, \psi)$ where $L \leq G$ is any subgroup and $\psi \in H^2(L, \kk^\times)$.
The relation $(L, \psi) \sim (L', \psi')$ simplifies to just 
$L = {}^g L'$ and $\psi'=\psi^g \in H^2(L', \kk^\times)$ for some $g \in G$, since each $\Omega_g=1$.
Then the count \eqref{eq:omegatrivialcount} follows from the orbit-stabilizer theorem applied to the conjugation action of $G$ on the set of pairs $(L, \psi)$ as above.
\end{proof}

Next, we note a simplification for counting bimodules which occurs in the case when $G$ is abelian.

\begin{lemma}\label{lem:omegatrivialbimod}
Suppose $G$ is abelian and let $(L_i, \psi_i), (L_j,\psi_j)$ be as in Proposition~\ref{prop:Fun-Vec}.  Then the number of indecomposable $(A(L_i,{\psi_i}), A(L_j,{\psi_j}))$-bimodules in $\mathsf{Vec}_G$,
 up to isomorphism, is
\begin{equation}\label{eq:omegatrivialbimodcount1}
\frac{|G||L_i \cap L_j|}{|L_i| |L_j|}(\text{\red{number} of }\psi^e_{i,j}\text{-regular conjugacy classes in }L_i \cap L_j).
\end{equation}
 \end{lemma}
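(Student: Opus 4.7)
The plan is to specialize Proposition~\ref{prop:Fun-Vec} to the case where $G$ is abelian and $\omega = 1$. The count of indecomposable $(A(L_i,\psi_i), A(L_j,\psi_j))$-bimodules is given by \eqref{eq:arrow} as
\[
\sum_{k \in L_i \backslash G / L_j} m_{i,j}(g_k),
\]
so the strategy is to show that the number of double cosets equals the prefactor $|G||L_i \cap L_j|/(|L_i||L_j|)$, and that $m_{i,j}(g_k)$ is independent of $g_k$ and equal to the number of $\psi_{i,j}^e$-regular conjugacy classes in $L_i \cap L_j$.

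First, because $G$ is abelian, every double coset $L_i g L_j$ equals the coset $g(L_i L_j)$ of the subgroup $L_i L_j \leq G$. Hence the number of double cosets is $|G|/|L_i L_j|$, which by the standard identity $|L_i L_j| = |L_i||L_j|/|L_i \cap L_j|$ equals the claimed prefactor. Second, for any $g \in G$ the subgroup $L_{i,j}^g = L_i \cap gL_j g^{-1}$ is simply $L_i \cap L_j$, independent of $g$. Third, inspecting formula \eqref{eq:aijg} with $\omega = 1$ and using $g^{-1}\ell g = \ell$ for abelian $G$, the 2-cocycle simplifies to
\[
\psi_{i,j}^g(\ell, \ell') = \psi_i(\ell, \ell')\, \psi_j(\ell'^{-1}, \ell^{-1}),
\]
which coincides with $\psi_{i,j}^e$ and is independent of $g$. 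Consequently $m_{i,j}(g_k)$ takes the same value for each double coset representative.

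Finally, I would invoke Schur's theorem (Theorem~\ref{thm:schur}) to identify $m_{i,j}(g_k)$, the number of linear equivalence classes of irreducible $\psi_{i,j}^e$-representations of $L_i \cap L_j$, with the number of $\psi_{i,j}^e$-regular conjugacy classes in $L_i \cap L_j$. Multiplying this constant by the number of double cosets computed above yields the formula \eqref{eq:omegatrivialbimodcount1}.

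No substantial obstacle is expected: the proof is essentially a direct application of Proposition~\ref{prop:Fun-Vec} together with two elementary abelian-group simplifications and Schur's theorem. The only item worth careful bookkeeping is verifying that the $\omega$-factors in \eqref{eq:aijg} genuinely disappear so that the 2-cocycle $\psi_{i,j}^g$ is equal to (not merely cohomologous to) $\psi_{i,j}^e$; this is immediate once one writes everything out with $\omega \equiv 1$ and $g^{-1}xg = x$.
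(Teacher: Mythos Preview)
Your proposal is correct and follows essentially the same approach as the paper's proof: specialize Proposition~\ref{prop:Fun-Vec} using that abelian $G$ makes the double coset count equal $|G||L_i\cap L_j|/(|L_i||L_j|)$ and forces $\psi_{i,j}^g=\psi_{i,j}^e$, then apply Theorem~\ref{thm:schur}. Your write-up simply supplies more of the routine details (the explicit simplification of \eqref{eq:aijg} and the $|L_iL_j|$ identity) than the paper's terse version.
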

\begin{proof}
Since $G$ is abelian, the number of representatives of $(L_i, L_j)$-double cosets in $G$  in Proposition~\ref{prop:Fun-Vec} is $(|G||L_i \cap L_j|)/(|L_i| |L_j|)$.  Since $\omega=1$ and conjugation in $G$ is trivial, $\psi^g_{i,j} = \psi^e_{i,j}$ for all $g \in G$.  Then the count follows from \eqref{eq:arrow} and Theorem \ref{thm:schur}.
\end{proof}

By the two previous results, the examination of ${\sf Vec}_G$ is easier in the cases when $G$ is abelian or is sub-Schur-trivial. 
 So we consider these cases in the four examples below, where $G$ is:
\begin{itemize}
\item a cyclic group $\mathbb{Z}_n$ (that is, abelian and sub-Schur-trivial); 
\item the Klein-four group $\mathbb{Z}_2 \times \mathbb{Z}_2$ (that is, abelian and not Schur-trivial); 
\item the symmetric group $S_3$ of order 6 (that is, non-abelian and sub-Schur-trivial); and 
\item the dihedral group $D_8$ of order 8 (that is, neither abelian nor Schur-trivial).
\end{itemize}

\begin{example} \label{ex:Zn}
Take $G = \langle x ~|~ x^n =e \rangle \cong \mathbb{Z}_n$, which is abelian and sub-Schur trivial (see, e.g., \cite[Proposition~10.1.1(ii)]{Karp2}).   
Let $\tau(n)$ denote the set of positive integers dividing $n$.
If $T_S(E)$ is a minimal, $S$-faithful tensor algebra in $\C$, then 
we may take $S = A(\langle x^m\rangle, 1)$ or $S=A(\langle x^m\rangle, 1) \oplus A(\langle x^{m'}\rangle, 1)$ for $m, m' \in \tau(n)$ by Proposition~\ref{prop:omegatrivial}.

Suppose $S = A(\langle x^m\rangle, 1)$ for $m \in \tau(n)$. 
From Proposition~\ref{prop:Fun-Vec}, the isomorphism classes of $S$-bimodules $E$ are classified by pairs $(g_k,\rho)$ where $g_k$ is a representative of a $\langle x^m\rangle$-$\langle x^m\rangle$-double cosets (i.e. a $\langle x^m\rangle$-coset) in $G$, and $\rho$ an irreducible representation of $\langle x^m\rangle$.  Since $G$ is abelian,  all its irreducible representations are 1-dimensional, and it is easy to see that nonisomorphic bimodules will not be conjugates of one another. So, equivalence classes of tensor algebras with base algebra $S$ are in bijection with isomorphism classes of $S$-bimodules in this case.

On the other hand, suppose $S = S_1\oplus S_2$ with $S_1=A(\langle x^m\rangle, 1)$ and $S_2= A(\langle x^{m'}\rangle, 1)$ for $m,m' \in \tau(n)$.  Then given any pair $E, E' \in \Bimod_\C(S_1, S_2)$, it can be shown by direct computation that there exist invertible $S_i$-bimodules $X_i$ such that $E' \cong \bar{X_1}  \otimes_S E \otimes_S X_2$ (where $\bar{X_1}$ is the inverse of $X_1$ in $\Bimod_\C(S_1)$ as before).  Thus for each $S$ with two indecomposable summands, there is only one equivalence class of minimal, faithful tensor algebra $T_S(E)$.
\end{example}

\medskip 

Next, we consider an abelian group which has a cohomologically nontrivial 2-cocycle.

\begin{example} \label{ex:Z2xZ2} 
Take $G = \langle x,y : x^2 = y^2 = e, ~yx=xy \rangle \cong \mathbb{Z}_2 \times \mathbb{Z}_2 $. There are five subgroups $L$ of $G$ up to conjugacy:
$\{\langle e \rangle, ~\langle x \rangle, ~\langle y \rangle, ~\langle xy \rangle, ~G\}$, and $H^2(L, \kk^\times)$ is trivial for $L \neq G$.  However, 
$H^2(G, \kk^\times) \cong \mathbb{Z}_2$ (see, e.g., \cite[Proposition~10.7.1]{Karp2}), and a 2-cochain on $G$ representing the nontrivial element of $H^2(G, \kk^\times)$ is \begin{equation} 
\label{eq:Z2xZ2}
\mu(x^{i_1}y^{j_1}, x^{i_2}y^{j_2}) = (-1)^{j_1i_2}, \quad \quad 0 \leq i_\ell, j_\ell \leq 1.
\end{equation}
By Proposition \ref{prop:omegatrivial}, there are 6 indecomposable semisimple base algebras, up to equivalence.

Turning to bimodules, we can compute the quantity \eqref{eq:omegatrivialbimodcount1} (or equivalently, \eqref{eq:arrow}) for every pair of subgroups of $G$, and the result is 
summarized in the following table.  
\smallskip

{\footnotesize \[
\begin{array}{c|c|c|c|c|c|c|}
\begin{array}{c}
\text{The value}~\eqref{eq:arrow} \text{ for}\\(L_i, \psi_i) \downarrow \text{ and }~~(L_j, \psi_j) \rightarrow
\end{array} & (\langle e \rangle,1) & (\langle x \rangle,1) & (\langle y \rangle,1) & (\langle xy \rangle,1) & (G,1) & (G,\mu) \\
\hline
(\langle e \rangle,1) & 4 & 2 & 2 & 2 & 1 & 1\\
\hline
(\langle x \rangle,1) & 2 & 4 & 1 & 1 & 2 & 2\\
\hline
(\langle y \rangle,1) & 2 & 1 & 4 & 1 & 2 & 2\\
\hline
(\langle xy \rangle,1) & 2 & 1 & 1 & 4 & 2 & 2\\
\hline
(G,1) & 1 & 2 & 2 & 2 & 4 & 1\\
\hline
(G,\mu)& 1 & 2 & 2 & 2 & 1 & 4\\
\hline
\end{array}
\]}
\smallskip

\noindent The count is simplified by noting that $\psi_{i,j}^e$ is cohomologically trivial unless 
$L_i = L_j = G$ and either $\psi_i$ or $\psi_j$ is $\mu$.
When $\psi_{i,j}^e$ is cohomologically trivial, the quantity \eqref{eq:omegatrivialbimodcount1} is $4\frac{|L_i \cap L_j|^2}{|L_i| |L_j|}$ and so it can be directly computed case-by-case.
In the three cases where $\psi_{i,j}^e$ is cohomologically nontrivial, we first have that $|G \backslash G / G| = 1$. Moreover, when $\psi_i = \psi_j = \mu$, we get that $\psi_{i,j}^e$ is symmetric so the \red{number} of $\psi_{i,j}^e$-regular conjugacy classes in $G$ is~4. On the other hand, when only one of $\psi_i$ or $\psi_j$ is $\mu$, then it can be directly seen from \eqref{eq:Z2xZ2} that the only $\psi_{i,j}^e$-regular conjugacy class is $\{e\}$.
Now by taking cases where $S$ is either an indecomposable algebra in ${\sf Vec}_{\mathbb{Z}_2 \times \mathbb{Z}_2}$ or is a direct sum of two indecomposable algebras in ${\sf Vec}_{\mathbb{Z}_2 \times \mathbb{Z}_2}$, one can classify equivalence classes of $S$-faithful ${\sf Vec}_{\mathbb{Z}_2 \times \mathbb{Z}_2}$-tensor algebras as done in Example~\ref{ex:Zn}.
\end{example}

Now we consider a nonabelian group where every subgroup has trivial second cohomology.

\begin{example} \label{ex:S3}
Take $G = \langle r, s ~|~ r^3 = s^2 = (sr)^2 =e\rangle\cong S_3$, the symmetric group on three letters, 
noting that there are four subgroups $L$ of $G$, up to conjugacy:
$\{\langle e \rangle, ~\langle s \rangle,  ~\langle r \rangle,~G \}$. Here, $H^2(L, \kk^\times)$ is trivial for  all $L \leq G$ (see, e.g. \cite[Proposition~10.1.1(ii) or Theorem~12.2.2]{Karp2}),
so Proposition \ref{prop:omegatrivial} implies that there are four indecomposable semisimple base algebras $S$ in ${\sf Vec}_{G}$, up to equivalence.
To illustrate equivalence versus isomorphism of tensor algebras, we will describe the equivalence classes of $\C$-tensor algebras of the form $T_S(E)$ and $T_{S\oplus S}(E)$ for each indecomposable $S$.  Similar methods can be applied to the cases where $S=S_1 \oplus S_2$ with $S_1$ and $S_2$ nonequivalent.  The descriptions of the bimodules below all follow from Proposition \ref{prop:Fun-Vec}.

For $S=A(\langle e \rangle)$, the 6 indecomposable bimodules are $\{\delta_g : g \in S_3\}$ as objects in $\C$, and are all invertible.  Conjugation of bimodules is conjugation in $S_3$, so there are only 3 equivalence classes of tensor algebras of the form $T_S(E)$.  On the other hand, there is just one equivalence class of minimal, faithful tensor algebra $T_{S\oplus S}(E)$  by Proposition \ref{prop:2vertexinvertible}.

For $S=A(\langle s \rangle)$, there are 3 isoclasses of indecomposable bimodules.  Two of them are $\delta_e \oplus \delta_s$ as objects in $\C$, one associated to the trivial representation and the other associated to the sign representation of $\langle s \rangle$.  The other indecomposable bimodule is $\delta_r \oplus \delta_{r^2} \oplus \delta_{sr} \oplus \delta_{sr^2}$ as an object of $\C$.  The first two are invertible and conjugation by them acts trivially, so there are 3 conjugacy classes of minimal tensor algebras of the form $T_S(E)$.  On the other hand, there are just 2 equivalence classes of minimal tensor algebras $T_{S\oplus S}(E)$ because the first two are equivalent by Proposition \ref{prop:2vertexinvertible}.

For $S=A(\langle r \rangle)$, there are 6 isoclasses of indecomposable bimodules: three of them are $\delta_e \oplus \delta_r \oplus \delta_{r^2}$ as objects in $\C$, associated to the 3 irreducible representations of $\langle r \rangle$, and the other three are $\delta_s \oplus \delta_{sr} \oplus \delta_{sr^2}$ as objects in $\C$, also associated to the 3 irreducible representations of $\langle r \rangle$.  All 6 of these bimodules are invertible. It can be computed that this group of invertible bimodules is isomorphic to $\mathbb{Z}_6$ (generated by $\delta_s \oplus \delta_{sr} \oplus \delta_{sr^2}$ with either nontrivial irreducible representation of $\langle r \rangle$).  Thus conjugation is trivial, giving 6 equivalence classes of minimal tensor algebras $T_S(E)$.  Again by Proposition \ref{prop:2vertexinvertible} there is only one equivalence class of minimal tensor algebras $T_{S\oplus S}(E)$.

For $S=A(S_3)$, there are 3 isoclasses of indecomposable bimodules, all having underlying object $\bigoplus_{g \in S_3} \delta_g$ in $\C$, associated to the 3 irreducible representations of $S_3$.  The two 1-dimensional representations give invertible bimodules and again conjugation by them is trivial, so there are 3 equivalence classes of faithful, minimal tensor algebras $T_S(E)$ and 2 equivalence classes of faithful, minimal tensor algebras $T_{S\oplus S}(E)$.
\end{example}

Finally, we consider a nonabelian group in which some subgroups have non-trivial second cohomology.

\begin{example} \label{ex:D8}
Take $G$ to be the dihedral group $D_8$ of order~8, with presentation
\begin{equation}\label{eq:D8}
D_8 = \langle x, y, z~|~ x^2=y^2=z^2 = e, ~ xy=yx, ~zx=yz, ~zy=xz\rangle.
\end{equation}
Note that there are 8 subgroups $L$ of $G$, up to conjugacy:
$$\{\langle e \rangle, \quad \langle x \rangle={}^z\langle y \rangle,  \quad \langle z \rangle={}^{yz}\langle xyz \rangle, \quad \langle xy \rangle, \quad \langle x,y \rangle, \quad \langle xy, z \rangle,  \quad \langle xz \rangle, \quad G \}.$$
Also, $H^2(G, \kk^\times) \cong \mathbb{Z}_2$ by \cite[Corollary~10.1.27]{Karp2}. The nontrivial cohomology class is represented by
\begin{equation}\label{eq:D82cocycle}
\beta(x^{i_1} y^{j_1} z^{n_1}, x^{i_2} y^{j_2} z^{n_2}) = (-1)^{j_1i_2}, \quad 0 \leq i_\ell,j_\ell,n_\ell \leq 1.
\end{equation}
By Proposition \ref{prop:omegatrivial},  up to equivalence, the indecomposable semisimple base algebras are 
represented by the pairs $(L, \psi)$:

\vspace{-.05in}

{\footnotesize 
\begin{equation*}
\begin{split}
\{(\langle e \rangle, 1), \hspace{.05in} (\langle x \rangle,1),  \hspace{.05in} (\langle z \rangle,1), \hspace{.05in} (\langle xy \rangle,1),
~\hspace{.05in} (\langle x,y \rangle,1),  \hspace{.05in} (\langle x,y \rangle, \beta|_{\langle x,y \rangle}), \hspace{.05in}
(\langle xy,z \rangle,1), \hspace{.05in} (\langle xy,z \rangle, \beta|_{\langle xy,z \rangle}),  \hspace{.05in} (\langle xz \rangle,1), \hspace{.05in} (D_8,1),  \hspace{.05in} (D_8,\beta)\}.
\end{split}
\end{equation*}}

\vspace{-.15in}

We carry out the count of isomorphism classes of indecomposable bimodules for some special cases, but we leave bimodule conjugacy class computations to the reader.

\begin{enumerate} 
\item At one extreme, we can take $(L_i, \psi_i)=(\langle e \rangle, 1)=(L_j,\psi_j)$.  The set of coset representatives $\{g_k\}$ in Proposition \ref{prop:Fun-Vec} is the entire group $D_8$.  Each $L_{i,j}^{g_k}=\langle e \rangle$, so each $m_{i,j}(g_k)=1$ and the total count of indecomposable bimodules is $|D_8|=8$.

\smallskip

\item Now consider $(L_i, \psi_i)=(\langle x,y \rangle, 1)$ and $(L_j,\psi_j)=(\langle z \rangle, 1)$.  In this case we have $\{g_k\}=\{e\}$ and $L_{i,j}^e=\langle e \rangle$ with $m_{i,j}(e)=1$, so there is a unique bimodule for this pair.

\smallskip

\item If we take $(L_i, \psi_i)=(\langle xy,z \rangle, 1)$ and $(L_j,\psi_j)=(\langle z \rangle, 1)$, then $\{g_k\}=\{e,x\}$. We have $L_{i,j}^e = \langle z \rangle$ and $L_{i,j}^x=\langle e \rangle$. So, $m_{i,j}(e)=2$ and $m_{i,j}(x)=1$, and there are three bimodules for this pair.

\smallskip

\item Next, we examine the case $L_i=L_j=\langle x,y\rangle$ for various choices of $\psi_i, \psi_j$.  We have $\{g_k\}=\{e, z\}$ and $L_{i,j}^e=L_{i,j}^z=\langle x, y\rangle$.  Note that $H^2(\langle x,y\rangle,\kk^\times)\cong \mathbb{Z}_2$  is equal to $\{1, \beta|_{\langle x,y \rangle}\}$.
If $\psi_i=\psi_j=1$ or $\psi_i=\psi_j=\beta|_{\langle x,y \rangle}$, then $\psi_{i,j}^e = \psi_{i,j}^z=1$, and hence, $m_{i,j}(e) + m_{i,j}(z)=4+4 = 8$. Else, if either
 $\psi_i=\beta|_{\langle x,y \rangle}$ and $\psi_j=1$, or, $\psi_i=1$ and $\psi_j=\beta|_{\langle x,y \rangle}$, then $\psi_{i,j}^e = \psi_{i,j}^z=\beta|_{\langle x,y \rangle}$; in this case, $m_{i,j}(e) + m_{i,j}(z)=1+1 = 2$.
This uses the same reasoning as in Example \ref{ex:Z2xZ2}.
 
\smallskip

\item Now if $(L_i, \psi_i)=(L_j,\psi_j) = (D_8, 1)$,  then $\{g_k\}=\{e\}$.  By Theorem \ref{thm:schur} we get that $m_{i,j}(e)~=~5$, the \red{number} of conjugacy classes of elements in $D_8$, or equivalently, the \red{number} of irreducible representations of $D_8$.
\end{enumerate}
\end{example}

Example \ref{ex:D8} will be used in Section~\ref{sec:H8} in the study of $\C$-tensor algebras for $\C$ being the category of finite-dimensional representations of the Kac-Patjutkin Hopf algebra $H_8$. 

\subsubsection{Nontrivial  $\omega$.} 

We now consider  $\omega \in Z^3(G, \kk^\times)$ cohomologically nontrivial. Note that the pairs $(L, \psi)$ parametrizing indecomposable semisimple ${\sf Vec}_G^\omega$-module categories in Proposition~\ref{prop:OstNat} are highly dependent on the choice of $\omega$. In particular, there will typically be fewer $L$ for $\omega$ nontrivial as compared to the case $\omega=1$ because of the requirement that $\omega$ restricted to $L$ must be cohomologically trivial. 

%Pertaining to the examples in the previous section, we have $H^3(\mathbb{Z}_n, \kk^\times) \cong \mathbb{Z}_n$, $H^3(\mathbb{Z}_2\times\mathbb{Z}_2, \kk^\times) \cong (\mathbb{Z}_2)^{3}$ \cite[Cor.~3.9]{BCT11}, 
%$H^3(S_3, \kk^\times) \cong \mathbb{Z}_6$,  and $H^3(D_8, \kk^\times) \cong \mathbb{Z}_4 \times \mathbb{Z}_2 \times \mathbb{Z}_2$.
\red{We study the case} when $G = \mathbb{Z}_n$ and when $G = D_8$ for a specific $\omega \in  H^3(D_8, \kk^\times)$ used later in Section~\ref{sec:H8}. We leave other examples for the reader.
In the examples below, $$\langle d\rangle_t \text{ denotes the remainder of } d  \text{ modulo } t.$$

\begin{example}
Continuing Example~\ref{ex:Zn},  take $G =  \langle x~|~x^n = e \rangle \cong \mathbb{Z}_n$. Let $\zeta$ be a primitive $n^{\rm th}$ root of 1. By \cite[(2.3.18)]{dWP} or \cite[Example 2.6.4]{EGNO}, the cohomology classes of 3-cocycles on $G$ are represented~by
\begin{equation}\label{eq:omegal}
\omega_{\ell}(x^i,x^{j},x^{k}) = \zeta^{\ell i(j+k - \langle j+k \rangle_n)/n}, \quad \text{ for }\ell = 0, 1, \dots, n-1.
\end{equation}
Since $\omega$ is cohomologically nontrivial in this section, we take $\ell > 0$. 
Let $\tau(n)$ be the set of positive divisors of $n$. Recall that the distinct subgroups of $G$ are $\langle x^{m} \rangle \cong \mathbb{Z}_{n/m}$ for $m \in \tau(n)$.  Fix $L$ such a subgroup.

Let us consider the restriction of $\omega_\ell$ to $L$.
We can write
\begin{equation}\label{eq:omegalm}
\omega_{\ell}(x^{mi},x^{mj},x^{mk}) = (\zeta^{m})^{\ell i(mj+mk - \langle mj+mk \rangle_n)/n}.
\end{equation}
One can check that $j+k-\langle mj+mk\rangle_n/m=j+k-\langle j+k \rangle_{n/m}$, and thus we can rewrite \eqref{eq:omegalm} in the standard form \eqref{eq:omegal} applied to the cyclic group $\langle x^m\rangle$, noting that $\zeta^m$ is a primitive $(n/m)^{\rm th}$ root of 1:
\begin{equation*}
\omega_{\ell}(x^{mi},x^{mj},x^{mk}) = (\zeta^{m})^{\ell i(j+k-\langle j+k \rangle_{n/m})/(n/m)}
\end{equation*}
This shows that $\omega_\ell$ restricted to $L$ is cohomologically trivial if and only if $|L|=n/m$ divides $\ell$.
So by Proposition \ref{prop:OstNat} and Lemma~\ref{lem:Omega}(b), the indecomposable semisimple base algebras $S$ in ${\sf Vec}_{G}^{\omega_\ell}$ are exactly the algebras $A(L, 1)$ where $L =\langle x^{m} \rangle$ and $n/m$ divides $\ell$.

The study of minimal, faithful tensor algebras in ${\sf Vec}_{G}^{\omega_\ell}$ can then be carried out in exactly the same way as in Example \ref{ex:Zn}, but using only base algebras with summands as described in the previous paragraph.
\end{example}

\begin{example} \label{ex:D8nontriv}
Continuing Example~\ref{ex:D8} with $G=D_8$, consider the nontrivial 3-cocycle $\omega \in Z^3(G, \kk^{\times})$ from \cite[Section~3.5]{Kac1968} given below; this 3-cocycle will be of use in the next two sections. For subgroups $$K = \langle z ~|~ z^2=e\rangle \quad \text{and} \quad N = \langle x, y ~|~ x^2=y^2=e, ~xy=yx\rangle$$ of $G$, let $e_{i,j}$ be the dual basis for $\kk^N$ where $e_{i,j} (x^k y^l) = \delta_{i,k} \delta_{j,l}$ and consider the maps 
$$\sigma\colon K \times K \rightarrow (\kk^N)^\times \quad \text{and} \quad \tau\colon N \times N \rightarrow (\kk^K)^\times$$ 
defined as follows.  Put $\sigma_{i,j}(h,h') := \sigma(h,h')(e_{i,j})$ and $\tau_{z^n}(p,p') := \tau(p,p')(z^n)$ for $h,h' \in K$ and $p, p' \in N$. \red{Define  $\sigma$ and $\tau$  by} setting the value equal to 1 except for the following:
\begin{equation}
\sigma_{1,0}(z,z) = \sigma_{0,1}(z,z) =-\sqrt{-1},
\end{equation}
\begin{equation}\label{eq:tau}
\tau_z(x,x) = \tau_z(y,y) = \tau_z(x, xy) = \tau_z(xy,y) = \sqrt{-1}, \quad \tau_z(y,x) = -1, \quad \tau_z(xy, x) = \tau_z(y, xy) = -\sqrt{-1}.
\end{equation}
Now let
\begin{equation} \label{eq:D8nontriv}
\omega(x^{i_1}y^{j_1}z^{n_1},~ x^{i_2}y^{j_2}z^{n_2}, ~x^{i_3}y^{j_3}z^{n_3})
= \sigma_{i_1,j_1}(z^{n_2}, z^{n_3}) \tau_{z^{n_3}}(x^{j_1}y^{i_1},x^{i_2}y^{j_2})
 \quad 0 \leq n_\ell, i_\ell , j_\ell \leq 1.
\end{equation}
Recall the 8 conjugacy classes of subgroups of $G$ listed in Example~\ref{ex:D8}.
We use formula \ref{eq:D8nontriv} to directly compute that $\omega|_{L}$ is trivial in $H^3(L,\kk^\times)$ exactly when $L$ is one of the following: 
\begin{equation}\label{eq:D8subgroups}
\{\langle e \rangle, \quad \langle x \rangle,  \quad \langle xy \rangle, \quad \langle z \rangle,
\quad \langle x,y \rangle, \quad \langle xy,z \rangle\}.
\end{equation}

Since the first four of these subgroups have trivial Schur multiplier, we get four nonequivalent indecomposable semisimple base algebras $S$ from these.  For the remaining two subgroups, their Schur multipliers are each isomorphic to $\mathbb{Z}_2$, so we must consider the possibilities for equivalence  as in Proposition \ref{prop:OstNat}.  It turns out that the two different choices of cocycle end up giving equivalent module categories, just as in \cite[Example~3.6]{Natale2016}.  In more detail,
first consider $L=\langle x,y \rangle$.
From a direct substitution of \eqref{eq:D8nontriv} into \eqref{eq:Omegag} and from the definitions of $\sigma, \tau$, it can be calculated that
\begin{equation*}
\Omega_z(x^{i_1}y^{j_1},\, x^{i_2}y^{j_2}) = \tau_z (x^{i_1}y^{j_1},\, x^{j_2}y^{i_2}), 
\end{equation*}
and that this represents a nontrivial cohomology class on $\langle x, y \rangle$.  Therefore, the two pairs $(\langle x,y \rangle, 1)$ and $(\langle x,y \rangle, \beta|_{\langle x,y \rangle})$, where $\beta$ is as in \eqref{eq:D82cocycle}, give rise to equivalent module categories.
For $L=\langle xy, z \rangle$, it can be similarly computed that $\Omega_x$ restricts to a nontrivial cohomology class on $L$, and therefore the two pairs $(\langle xy, z \rangle, 1)$ and $(\langle xy, z \rangle, \beta|_{\langle xy,z \rangle})$ give rise to equivalent module categories.

\brk We  end by briefly remarking on the count of indecomposable bimodules for some examples, building on Example~\ref{ex:D8} (again, leaving bimodule conjugacy class computations to the reader).
For each example where every $L_{i,j}^{g_k}=\langle e \rangle$, the count of indecomposable bimodules does not change.
In fact, the only possibility where the count can change is when some $L_{i,j}^{g_k}$ has nontrivial Schur multiplier, and the only possibility for this in $D_8$ is when $L_{i,j}^{g_k}\cong \mathbb{Z}_2\times\mathbb{Z}_2$.
\end{example}

%%%%%%%%%%%%%%%%%
\section{Tensor algebras in group-theoretical fusion categories} \label{sec:tengt}

In this section, we study $\mathcal{C}$-tensor algebras [Definition~\ref{S,E},~\ref{def:minl}] for group-theoretical fusion categories $\mathcal{C}$ [Definition~\ref{def:grpthl}], building on the work in the previous section.  We begin by providing in Section~\ref{sec:grpthl} terminology and preliminary results for group-theoretical fusion categories $\mathcal{C}:=\mathcal{C}(G, \omega, K, \alpha)$, and then we recall in Section~\ref{sec:recon} the process of reconstructing a semisimple Hopf algebra whose representation category is tensor equivalent to $\mathcal{C}$. To obtain results on base algebras of $\mathcal{C}$-tensor algebras, we also examine indecomposable semisimple algebras in $\mathcal{C}$ in Section~\ref{sec:recon}. Finally, in Section~\ref{sec:H8}, we illustrate results by classifying all indecomposable semisimple algebras in  the category of finite-dimensional representations of the Kac-Paljutkin Hopf algebra $H_8$, up to Morita equivalence; this category is tensor equivalent to a group-theoretical fusion category $\mathcal{C}(D_8, \omega, \mathbb{Z}_2, 1)$.

\subsection{Background and notation} \label{sec:grpthl}
In this part we establish notation for group-theoretical fusion categories and  module categories over them. Recall the ${\sf Vec}_G^\omega$-module category $\mathcal{M}(K,\alpha)$ from Definition \ref{def:M(La,al)}.

\begin{definition}[$\mathcal{C}(G, \omega, K, \alpha)$] \label{def:grpthl}
A fusion category is called {\it group-theoretical} if it is categorically Morita equivalent to a pointed fusion category, that is, if it is equivalent to one of the form
$$ \mathcal{C}(G, \omega, K, \alpha) := (({\sf Vec}_G^\omega)^*_{\mathcal{M}(K,\alpha)})^{\rm op} $$
where $G$ is a finite group, $\omega$ is a 3-cocycle $G \times G \times G \to \kk^{\times}$, $K$ is a subgroup of $G$ with $\omega|_{K}$  trivial, and $\alpha$ is a 2-cochain $K \times K \to \kk^{\times}$ such that $d \alpha = \omega|_{K}$. 
\end{definition}

Thus, $\mathcal{C}(G, \omega, K, \alpha)$ is tensor equivalent to the category of $A(K,\alpha)$-bimodules in ${\sf Vec}_G^\omega$.

\begin{example} \label{ex:gt} The following are examples of group-theoretical fusion categories.
\begin{itemize}
\item[(1)] We have that $\mathcal{C}(G, \omega, \langle e \rangle, 1) \; \overset{\otimes}{ \sim }\;  {\sf Vec}_G^\omega$. Indeed, $A(\langle e \rangle,1) = {\bf 1}_{{\sf Vec}_G^\omega}$, so $$\mathcal{M}(\langle e \rangle, 1)  \; \overset{\otimes}{ \sim }\;  {\sf Mod}_{{\sf Vec}_G^\omega}\text{-}A(\langle e \rangle,1)  \; \overset{\otimes}{ \sim }\;  {\sf Vec}_G^\omega.$$ Moreover,  $\mathcal{C}^{\rm op}  \; \overset{\otimes}{ \sim }\;  \mathcal{C}_{\mathcal{C}}^*$. See  Examples~\ref{ex:S=1},~\ref{ex:extremes}, and~\ref{extreme-Gam}. 
\medskip
\item[(2)] We obtain that $\mathcal{C}(G, 1, G, 1)  \; \overset{\otimes}{ \sim }\; {\sf Rep}(G)$ as follows. First, $ {\sf Rep}(G)  \; \overset{\otimes}{ \sim }\; ({\sf Vec}_G)^*_{{\sf Vec}}$ [Remark~\ref{rmk:RepGam}]. By \cite[Corollary~3.4]{OstrikIMRN2003}, $\mathcal{M}\mathcal(G,1)$ has rank one.
Thus,
$$
({\sf Vec}_G)^*_{\mathcal{M}\mathcal(G,1)}
={\sf Fun}_{{\sf Vec}_G}(\mathcal{M}\mathcal(G,1), \mathcal{M}\mathcal(G,1))
\  \; \overset{\otimes}{ \sim }\; \ {\sf Fun}_{{\sf Vec}_G}({\sf Vec}, {\sf Vec}) 
=({\sf Vec}_G)^*_{{\sf Vec}}.
$$

\item[(3)] We also have that ${\sf Rep}(H)  \; \overset{\otimes}{ \sim }\;  \mathcal{C}(G, \omega, K, 1)$,  where $H$ is the bicrossed product $\kk^N\; {}^\tau\#_\sigma\; \kk K$. Here, $(K,N)$ is a matched pair of finite groups (so that $K$ and $N$ act on each other in a certain fashion) yielding a group $G = N \bowtie K$  that is a semi-direct product when either the action of $N$ on $K$, or $K$ on $N$, is trivial (this is also called an {\it exact factorization} of $G$). The maps $\sigma\colon K \times K \to (\kk^N)^\times$ and $\tau\colon N \times N \to (\kk^K)^{\times }$ are compatible cocycles defining the multiplication and comultiplication of $H$, respectively. Moreover, $\omega \in H^3(G, \kk^\times)$ represents the class $\overline{\omega}(\sigma, \tau)$ for the map $\overline{\omega}\colon \text{Opext}(\kk^N, \kk K) \to H^3(G, \kk^\times)$ in the {\it Kac sequence}. In this case, $H$ arises as the {\it abelian extension}
$$\hspace{.5in} \kk \to \kk^N \to H \to \kk K \to \kk.$$
See \cite[Section~3 and Proposition~4.5]{Natale2003}  for more details.
\end{itemize}
\end{example}

As a special case of (3) above we continue Example~\ref{ex:D8nontriv} below; we will consider this example in more detail in Section \ref{sec:H8}.

\begin{example} \label{ex:H8gt}
Take the groups $N = \langle x,y ~|~ x^2 = y^2 = e, \; xy=yx \rangle$ and $K = \langle z ~|~ z^2 = e \rangle$ from Example~\ref{ex:D8nontriv}, with the $N$-action on $K$ trivial, and the $K$-action on $N$ given by $z \cdot x = y$ and $z \cdot y = x$. Thus, $G = N \rtimes K \cong D_8$. With the cocycle $\omega = \omega(\sigma, \tau)$ given in \eqref{eq:D8nontriv}, we get that $\mathcal{C}(G, \omega, K, 1) \overset{\otimes}{ \sim } {\sf Rep}(H_8) $, for $H_8$ the Kac-Paljutkin semisimple Hopf algebra of dimension 8 \cite{KacPaljutkin, Kac1968}; see Definition~\ref{def:H8}.

Using Proposition~\ref{prop:Fun-Vec},  we can describe the simple objects of $\mathcal{C}(G, \omega, K, 1)$ in this case: 
$$X_0:= M(x, \rho_{\text{triv}}^{\langle e \rangle}), \quad X_1:= M(e, \rho_{\text{triv}}^K), \quad X_2:= M(e, \rho_{\text{sign}}^K), \quad X_3:= M(xy, \rho_{\text{triv}}^K), \quad X_4:= M(xy, \rho_{\text{sign}}^K).$$ 
Here, $M(g, \rho)$ is the simple object corresponding to the $K$-$K$ double coset $KgK$ in $G$, with $\rho$ an irreducible (projective) representation of $K \cap gKg^{-1}$ (with trivial Schur multiplier). Indeed, 
$$\{\langle z \rangle g \langle z \rangle\}_{g \in D_8} = \{e,z\} \cup \{x, y, xz, yz\} \cup \{xy, xyz\},$$
and we take representatives $g = e, x, xy$ and compute that $K \cap g K g^{-1}$ is $K$, $\langle e \rangle$, $K$, respectively.
\end{example}

From Theorem~\ref{thm:2categ}, we see that the following categories will play an essential role in studying group-theoretical fusion categories.

\begin{notation}[$\mathcal{M}^{K,\alpha}(L,\psi)$] \label{not:M(K,a,L,p)}
Fix a group $G$ and 3-cocycle $\omega$ on $G$.  We write
\begin{equation}\label{eq:MKaLb}
\M^{K,\alpha}(L,\psi):=\Fun_{{\sf Vec}_G^\omega}(\M(K,\alpha), \M(L,\psi)),
\end{equation}
which is an indecomposable semisimple left $\mathcal{C}(G, \omega, K, \alpha)$-module category by precomposition of functors.
\end{notation}

As a consequence of Theorem~\ref{thm:2categ}, Proposition \ref{prop:OstNat}, and Proposition~\ref{prop:Fun-Vec}, we have the following result.

\begin{proposition} \label{prop:modgrpthl}
Every indecomposable semisimple module category over the group-theoretical fusion category $\mathcal{C}(G, \omega, K, \alpha)$ is equivalent to some $\M^{K,\alpha}(L,\psi)$, where
\red{$(L, \psi)$ is as in} Proposition~\ref{prop:OstNat}, and its simple objects are $M(g, \rho)$ as in Proposition \ref{prop:Fun-Vec}.
The conditions for $\M^{K,\alpha}(L,\psi)$ and $\M^{K,\alpha}(L',\psi')$ to be equivalent are the same as in Proposition~\ref{prop:OstNat}, 
and the value \eqref{eq:arrow} is the rank of the functor category
 $\Fun_{\mathcal{C}(G, \omega, K, \alpha)}(\M^{K,\alpha}(L_i,\psi_i),\M^{K,\alpha}(L_j,\psi_j))$. \qed
\end{proposition}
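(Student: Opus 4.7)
The plan is to derive this proposition directly from the classification over ${\sf Vec}_G^\omega$ in Proposition~\ref{prop:OstNat}, by transporting it along the 2-equivalence of Theorem~\ref{thm:2categ}. I would take $\mathcal{C} = {\sf Vec}_G^\omega$ and $\mathcal{M} = \mathcal{M}(K,\alpha)$. Since ${\sf Vec}_G^\omega$ is a genuine tensor (not merely multi-tensor) category, the decomposition of $\mathbf{1}$ in Lemma~\ref{lem:idem} is trivial, so every nonzero module category over ${\sf Vec}_G^\omega$ is automatically faithful; and $\mathcal{M}(K,\alpha)$, being indecomposable semisimple, is exact. The hypotheses of Theorem~\ref{thm:2categ} are therefore satisfied, so that by Definition~\ref{def:grpthl} we obtain a 2-equivalence ${\sf Mod}({\sf Vec}_G^\omega) \xto{\sim} {\sf Mod}(\mathcal{C}(G,\omega,K,\alpha))$ sending $\mathcal{N} \mapsto {\sf Fun}_{{\sf Vec}_G^\omega}(\mathcal{M}(K,\alpha),\mathcal{N})$.

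Next I would invoke Proposition~\ref{prop:OstNat} to enumerate the indecomposable semisimple module categories over ${\sf Vec}_G^\omega$ as the $\mathcal{M}(L,\psi)$ with $(L,\psi)$ satisfying the stated conditions. Applying the 2-functor above sends each such $\mathcal{M}(L,\psi)$ to $\mathcal{M}^{K,\alpha}(L,\psi)$ in the notation of~\eqref{eq:MKaLb}. Since a 2-equivalence preserves and reflects equivalence of objects, this simultaneously yields the classification statement for $\mathcal{C}(G,\omega,K,\alpha)$-module categories and shows that the equivalence criterion for $\mathcal{M}^{K,\alpha}(L,\psi) \sim \mathcal{M}^{K,\alpha}(L',\psi')$ reads verbatim as the criterion from Proposition~\ref{prop:OstNat} for $\mathcal{M}(L,\psi) \sim \mathcal{M}(L',\psi')$.

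For the last clause, I would use the fact that a 2-equivalence restricts to equivalences on Hom-categories between corresponding objects. This gives ${\sf Fun}_{\mathcal{C}(G,\omega,K,\alpha)}(\mathcal{M}^{K,\alpha}(L_i,\psi_i),\mathcal{M}^{K,\alpha}(L_j,\psi_j)) \simeq {\sf Fun}_{{\sf Vec}_G^\omega}(\mathcal{M}(L_i,\psi_i),\mathcal{M}(L_j,\psi_j))$, and the rank together with the simple-object description $M(g,\rho)$ of the right-hand side were already computed in Proposition~\ref{prop:Fun-Vec}. The main point to be careful about is precisely this step: one must verify that the 2-functor of Theorem~\ref{thm:2categ} induces the expected equivalence on Hom-categories in a canonical way (and not merely a bijection on equivalence classes of objects), so that the simple-object labels and the counting formula~\eqref{eq:arrow} transfer unchanged to the group-theoretical setting. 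This is built into the definition of a 2-equivalence, but it is the one point that genuinely needs to be unpacked to complete the argument.
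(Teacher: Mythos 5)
Your proposal is correct and follows exactly the route the paper takes: the paper derives this proposition as an immediate consequence of Theorem~\ref{thm:2categ} (applied with $\C = {\sf Vec}_G^\omega$, $\M = \M(K,\alpha)$, noting faithfulness is automatic since $\mathbf{1}$ is simple), Proposition~\ref{prop:OstNat}, and Proposition~\ref{prop:Fun-Vec}, which is precisely the argument you unpack. Your added care about the 2-equivalence inducing equivalences on Hom-categories (not just a bijection on equivalence classes) is the right point to flag, and it is indeed built into the cited 2-equivalence.
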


\begin{example}\label{ex:CoK1mods}
Continuing Example \ref{ex:H8gt}, we saw in Example~\ref{ex:D8nontriv} that there are 6 equivalence classes of indecomposable semisimple ${\sf Vec}_{D_8}^\omega$-module categories, parametrized by pairs $(L, 1)$ where $L$ is one of the subgroups in~\eqref{eq:D8subgroups}.  Thus the indecomposable semisimple module categories over $\mathcal{C}(D_8, \omega, \langle z \rangle, 1) \overset{\otimes}{\sim} {\sf Rep}(H_8)$ are exactly those of the form $\M^{\langle z \rangle,1}(L,1)$ for $L$ one of the subgroups in \eqref{eq:D8subgroups}, up to equivalence.
\end{example}

\subsection{Reconstruction} \label{sec:recon}
Next, we turn our attention to group-theoretical fusion categories that are tensor equivalent to representation categories of semisimple Hopf algebras. 
We have the following reconstruction theorem for finite tensor categories $\mathcal{C}$ equipped with a fiber functor $F$. Recall that the $\kk$-algebra ${\sf End}(F)$ of functorial endomorphisms of $F$ has the structure of a finite-dimensional Hopf algebra; see \cite[Sections~1.8, 5.2, 5.3]{EGNO} for details.

\begin{theorem} \cite[Theorem~5.3.12]{EGNO} \label{thm:recon}
Consider a finite tensor (resp., fusion) category $\mathcal{C}$, and suppose that $\C$ admits a fiber functor $F$.  Then $\C$ is tensor equivalent to ${\sf Rep}(H)$, for $H = {\sf End}(F)$ a finite-dimensional (resp., semisimple)  Hopf algebra. \qed
\end{theorem}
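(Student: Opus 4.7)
The plan is to follow the classical Tannakian reconstruction strategy, adapted to the finite (not necessarily semisimple) tensor-categorical setting. I would first endow $H := \End(F)$ with its bialgebra structure, then produce an antipode, then construct a comparison functor to $\Rep(H)$ and prove it is a tensor equivalence. Concretely, a natural transformation $\eta \in \End(F)$ is a family $\{\eta_X \colon F(X) \to F(X)\}_{X \in \C}$, and I would define the multiplication on $H$ as composition of natural transformations, with unit the identity transformation. The comultiplication is the map $\Delta \colon H \to H \otimes H$ dual (after finite-dimensionality is established) to the map $(\eta,\zeta) \mapsto \{\eta_X \otimes \zeta_Y\}$, obtained using the tensor structure $F(X) \otimes F(Y) \xto{\sim} F(X \otimes Y)$; equivalently, $\Delta(\eta)$ is determined by requiring $(\Delta \eta)_{X,Y} = \eta_{X \otimes Y}$ under this identification. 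The counit is $\eta \mapsto \eta_{\mathbf{1}} \in \End_\C(F(\mathbf{1})) = \End_\kk(\kk) = \kk$. Bialgebra axioms follow from hexagon/pentagon coherence for $F$.

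Next I would establish that $H$ is finite-dimensional. Because $\C$ is finite, it admits a projective generator $P = \bigoplus_i P_i$, the direct sum of projective covers of simples. Since $F$ is faithful and exact on $\C$, any $\eta \in \End(F)$ is determined by $\eta_P \in \End_\kk(F(P))$, and conversely any compatible element of $\End_\kk(F(P))$ (compatible with the finitely many morphisms between direct summands of $P$) extends uniquely to a natural transformation. Thus $H$ sits inside the finite-dimensional algebra $\End_\kk(F(P))$, which gives $\dim_\kk H < \infty$. For the antipode, I would use rigidity: given $\eta \in H$, define $S(\eta)$ on $F(X)$ by transposing $\eta_{X^*}$ through the canonical isomorphisms $F(X^*) \cong F(X)^*$ coming from duality in $\C$ and $\Vec$. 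The antipode axioms $m \circ (S \otimes \id) \circ \Delta = u \circ \epsilon = m \circ (\id \otimes S) \circ \Delta$ follow from the rigidity (evaluation/coevaluation) axioms.

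Having $H$ a finite-dimensional Hopf algebra, I would next construct the comparison functor $\widetilde{F} \colon \C \to \Rep(H)$ by $\widetilde{F}(X) = F(X)$ with action $\eta \cdot v := \eta_X(v)$ for $\eta \in H$, $v \in F(X)$; naturality of $\eta$ ensures morphisms in $\C$ become $H$-linear, and the tensor structure on $F$ upgrades to a tensor structure on $\widetilde{F}$. By construction $\widetilde{F}$ composed with the forgetful functor $\Rep(H) \to \Vec$ recovers $F$. The key step is then to show $\widetilde{F}$ is a tensor equivalence. Faithfulness is inherited from $F$; fullness follows because any $H$-linear map $F(X) \to F(Y)$ is, by the very definition of the $H$-action, a morphism commuting with all $\eta_X$, and one then argues using the projective generator and exactness that it lifts to a morphism in $\C$. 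For essential surjectivity, I would show that $\widetilde{F}$ induces a bijection on isomorphism classes of simples by comparing dimensions: $\sum (\dim V_i)^2$ on both sides matches $\dim H$ in view of the identification of $H$-modules $F(P) \cong H$ (as a left regular module, via the evaluation pairing), giving the Peter--Weyl-type decomposition. This dimension count forces $\widetilde{F}$ to hit every simple $H$-module.

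Finally, for the multi-tensor versus fusion dichotomy: if $\C$ is finite semisimple, then $\Rep(H)$ is semisimple, so Maschke's theorem for Hopf algebras implies $H$ is semisimple. The main obstacle I anticipate is the full faithfulness and essential surjectivity of $\widetilde{F}$ in the non-semisimple case, where one cannot rely on complete reducibility and must use the projective generator $P$ together with exactness of $F$ to transfer structure; in particular, identifying $F(P) \cong H$ as $H$-modules (so that $\widetilde{F}$ sends the regular bimodule $P$ to the regular $H$-module) is the crux, after which the equivalence follows by standard Morita-style arguments applied to the progenerator.
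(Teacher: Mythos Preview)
The paper does not prove this theorem at all: it is stated with a citation to \cite[Theorem~5.3.12]{EGNO} and closed with a \qed, so there is no proof in the paper to compare your attempt against. The authors are simply importing a known reconstruction theorem as a black box.

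That said, your outline is a reasonable sketch of the standard Tannakian argument as presented in the cited reference. A couple of points would need tightening if you were to write this up in full. Your fullness argument is close to circular as stated: an $H$-linear map $F(X)\to F(Y)$ is by definition one commuting with every $\eta\in\End(F)$, but you then need a genuine reason why such a map comes from $\C$; the usual route is via the coend realization $H^*\cong\int^{X}F(X)^*\otimes F(X)$ (or equivalently the identification $F(P)\cong H$ you mention), not a direct lifting. For essential surjectivity in the non-semisimple case, the dimension count on simples is not enough, since $\Rep(H)$ can have simples of the same dimensions as those in the image without being hit; one really does need the progenerator identification $F(P)\cong {}_HH$ and then a Morita-type argument, as you indicate at the end. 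So your final paragraph is where the real content lies, and the earlier essential surjectivity paragraph should be folded into it rather than presented as an independent argument.
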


We remind the reader that there are group-theoretical fusion categories that do not admit any fiber functor (e.g., ${\sf Vec}_G^\omega$ for $\omega$ non-trivial, \cite[Example~5.1.3]{EGNO}),
and furthermore there exist semisimple Hopf algebras whose representation categories are not group-theoretical \cite[Corollary~4.6]{Nikshych2008}.
In any case,  consider the result below.

\begin{proposition} \label{prop:gtfiber}
We have that a group-theoretical fusion category $\C:=\C(G, \omega, K, \alpha)$ admits a fiber functor if and only if there exists a subgroup $N \leq G$ and 2-cochain $\gamma$ on $N$ such that $d\gamma=\omega|_N$ where $G=KN$ and 
$K \cap N$ has a unique irreducible $\psi_{i,j}^e$-representation as in \eqref{eq:aijg}. Here, $\psi_i = \alpha$ and $\psi_j=\gamma$.
In this case, the fiber functor is
\begin{equation}
F_V: \C \to {\sf Vec}, \quad X \mapsto {\sf Hom}_{\mathcal{M}_0}(X \otimes V, V),
\end{equation}
where $\mathcal{M}_0 = \mathcal{M}^{K,\alpha}(N,\gamma)$ and $V$ is the unique simple object of $\M_0$. 
\end{proposition}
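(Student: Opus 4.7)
The plan is to reduce the existence of a fiber functor to the existence of a rank-one indecomposable exact $\C$-module category, and then to apply the parametrization and rank formula already recorded in Propositions~\ref{prop:modgrpthl} and~\ref{prop:Fun-Vec}.

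First, I would invoke the standard correspondence between fiber functors on a finite tensor category $\C$ and exact indecomposable $\C$-module categories $\M_0$ of rank one (see \cite[Chapter~5]{EGNO}). Concretely, given such an $\M_0$ with unique simple object $V$, the assignment $F_V(X) := \Hom_{\M_0}(X \otimes V, V)$ defines an exact faithful $\kk$-linear tensor functor $\C \to \sVec$, whose monoidal structure comes from the associativity constraint of the module action together with the fact that every object of $\M_0$ is a multiple of $V$. Conversely, any fiber functor $F$ makes $\sVec$ into an indecomposable $\C$-module category via $X \otimes U := F(X) \otimes U$, which has rank one.

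Second, by Proposition~\ref{prop:modgrpthl}, any indecomposable semisimple $\mathcal{C}(G,\omega,K,\alpha)$-module category is equivalent to some $\M^{K,\alpha}(N,\gamma)$, where $N \leq G$ has $\omega|_N$ cohomologically trivial and $\gamma$ is a 2-cochain on $N$ with $d\gamma = \omega|_N$. Thus rank-one such module categories are in bijection with equivalence classes of admissible pairs $(N,\gamma)$ for which $\M^{K,\alpha}(N,\gamma)$ has a unique simple object. By the rank formula in Proposition~\ref{prop:Fun-Vec}, this last condition reads
\[
\sum_{g \in K \backslash G / N} m_{K,N}(g) = 1,
\]
which forces both exactly one $K$-$N$ double coset (i.e., $G = KN$) and $m_{K,N}(e) = 1$; by Theorem~\ref{thm:schur} the latter equality says that $K \cap N$ has a unique irreducible $\psi_{K,N}^e$-representation, with $\psi_{K,N}^e$ being the cocycle \eqref{eq:aijg} formed from $\psi_i = \alpha$ and $\psi_j = \gamma$.

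Combining the two steps yields the stated biconditional: a fiber functor exists iff there is an admissible $(N,\gamma)$ making $\M_0 := \M^{K,\alpha}(N,\gamma)$ of rank one, and in that case the fiber functor is the one displayed in the statement. The main obstacle will be verifying carefully that $F_V(X) = \Hom_{\M_0}(X \otimes V, V)$ really defines a tensor functor in the left-module conventions used here, as opposed to some duality-twisted variant, and that the passage between fiber functors and rank-one module categories is a genuine bijection; once this categorical dictionary is set up, the rest reduces to an unwinding of the rank formula in Proposition~\ref{prop:Fun-Vec}.
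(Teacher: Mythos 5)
Your proposal is correct and follows essentially the same route as the paper's own (very brief) proof: reduce the existence of a fiber functor to the existence of a rank-one indecomposable semisimple $\C$-module category, then apply the parametrization of Proposition~\ref{prop:modgrpthl} and the rank count of Proposition~\ref{prop:Fun-Vec} (unwound, as you do, into the single-double-coset condition $G=KN$ plus Schur's theorem). The only difference is that you spell out the fiber-functor/rank-one-module-category dictionary and the double-coset bookkeeping explicitly, which the paper leaves implicit.
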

\begin{proof}
Since $\C$ admits a fiber functor if and only if it has a semisimple module category of rank 1, from Proposition~\ref{prop:modgrpthl} we see that this occurs if and only if there exists $(N,\gamma)$ as in the statement with $\mathcal{M}^{K,\alpha}(N,\gamma)$ of rank 1.  This rank is counted in Proposition \ref{prop:Fun-Vec}.
\end{proof}

Applying Theorem \ref{thm:recon}, this gives a criterion for $\C(G,\omega,K,\alpha)$ to be equivalent to the representation category of a Hopf algebra. We fix notation for this situation, which will be studied in more detail for the remainder of the paper.

\begin{notation}[$\C$, $F_V$, $\mathcal{M}_0$, $H(N,\gamma)$] \label{not:H(N,gamma)} 
Assume that $\C:=\mathcal{C}(G, \omega, K, \alpha)$ admits a fiber functor, and fix one $F_V \colon \C \to \sVec$ as in Proposition~\ref{prop:gtfiber}. Take also $\mathcal{M}_0 := \mathcal{M}^{K,\alpha}(N,\gamma)$ as in Proposition \ref{prop:gtfiber}. We write $H(N,\gamma)$ for the corresponding semisimple Hopf algebra obtained from the fiber functor $F_V$.  Note that the data $(G,\omega,K,\alpha)$ defining $H(N,\gamma)$ is understood from context.  Thus, we have
${\sf Rep}(H(N,\gamma)) \overset{\otimes}{\sim} \mathcal{C}(G, \omega, K, \alpha)$.
\end{notation}

Our next goal is to use the classification of indecomposable semisimple algebras in $\C$ from previous sections to study indecomposable semisimple algebras in ${\sf Rep}(H(N,\gamma))$ via the equivalence ${\sf Rep}(H(N,\gamma)) \overset{\otimes}{\sim}\C$.
We refer to such algebras as {\it indecomposable semisimple $H(N,\gamma)$-algebras}.
To describe these algebras more explicitly, we recall the internal End construction.

\begin{definition}[$\underline{\sf End }(M)$] \cite[Section~7.8]{EGNO}
Let $\mathcal{M}$ be a $\C$-module category and fix an object $M \in \mathcal{M}$. The {\it internal End of $M$} is the object in $\mathcal{C}$ that represents the functor $\mathcal{C} \to {\sf Vec}$, $X \mapsto {\sf Hom}_{\mathcal{M}}(X  \otimes M, M)$; it is denoted by $\underline{\sf End }(M)$. Namely, we get that 
\begin{equation}\label{eq:internalend}
{\sf Hom}_{\mathcal{M}}(X  \otimes M, M) \; \cong \;  {\sf Hom}_{\mathcal{C}}(X, \underline{\sf End }(M)).
\end{equation}
 \end{definition}
 
 We have that $\underline{\sf End }(M)$ is an algebra in $\mathcal{C}$ (see \cite[Section~7.9]{EGNO}).

 \begin{example} \label{ex:Enddual}
For any finite-dimensional Hopf algebra $H$, we have $\EEnd(\kk) \cong  H^*$ as algebras in $\Rep(H)$  (see \cite[Example~7.9.11]{EGNO}).
Applying this to $H = {\sf End}(F_V)$ using the equivalence of Proposition~\ref{prop:gtfiber},  we get that $F_V(\EEnd(V))= H^*$ in $\Rep(H)$, for the simple object $V \in \mathcal{M}_0$.
 \end{example}

\begin{lemma}[$A_M$, $A_{M(g,\rho)}$] \label{lem:AM}
Every indecomposable semisimple $H(N,\gamma)$-algebra is Morita-equivalent in ${\sf Rep}(H(N,\gamma))$ to an algebra $F_V(A_M)$ where $A_M:= A_{M(g, \rho)} :=\EEnd(M)$, with $M = M(g,\rho)$ a simple object of some $\M^{K,\alpha}(L,\psi)$ as in Proposition \ref{prop:modgrpthl}.
 
Furthermore, we have the dimension calculation:
\begin{equation}\label{eq:dimFVAM}
\dim_\kk F_V(A_M) = \sum_{X \in \textnormal{Irr}(\C)} m_X(M)\; \FPdim_{\mathcal{C}} X
\quad \quad \text{where }\; m_X(M) := \dim_\kk {\sf Hom}_{\mathcal{M}}(X \otimes M, M).
\end{equation}

\end{lemma}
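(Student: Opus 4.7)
The plan is to treat the two assertions separately. For the Morita classification, I would begin by observing that, since $F_V\colon \C \xto{\sim} \Rep(H(N,\gamma))$ is a tensor equivalence, pulling a semisimple $H(N,\gamma)$-algebra back along $F_V$ gives a semisimple algebra in $\C := \mathcal{C}(G,\omega,K,\alpha)$, and Morita equivalence (realized as equivalence of right module categories over the respective tensor categories) is preserved under this transport. By Theorem~\ref{thm:ModcA}, indecomposable semisimple algebras in $\C$ are classified up to Morita equivalence by indecomposable semisimple $\C$-module categories, which Proposition~\ref{prop:modgrpthl} identifies as the $\mathcal{M}^{K,\alpha}(L,\psi)$, whose simple objects are the $M(g,\rho)$. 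To produce an explicit Morita representative of the form $A_M = \EEnd(M)$, I would pick any simple $M = M(g,\rho) \in \mathcal{M}^{K,\alpha}(L,\psi)$ and invoke the internal-End formalism, which yields a $\C$-module equivalence $\mathcal{M}^{K,\alpha}(L,\psi) \simeq \Mod_\C\text{-}\EEnd(M)$. This requires that $M$ generate $\mathcal{M}^{K,\alpha}(L,\psi)$, and that in turn follows from indecomposability: the $\C$-submodule subcategory generated by $M$ cannot be proper, so every other simple appears as a summand of some $X \otimes M$ with $X \in \C$. Pushing the resulting $\C$-algebra $\EEnd(M)$ forward through $F_V$ gives the desired form $F_V(A_M)$.

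For the dimension formula, my approach is a direct expansion of $A_M$ in $\C$. Because $\C$ is semisimple, I would write $A_M \cong \bigoplus_{X \in \Irr(\C)} [A_M:X]\,X$ with $[A_M:X] = \dim_\kk \Hom_\C(X, A_M)$. The representability isomorphism \eqref{eq:internalend} identifies this multiplicity with $\dim_\kk \Hom_{\mathcal{M}}(X \otimes M, M) = m_X(M)$. Applying the $\kk$-linear exact functor $F_V$ and taking $\kk$-dimensions then yields
\[
\dim_\kk F_V(A_M) \;=\; \sum_{X \in \Irr(\C)} m_X(M)\, \dim_\kk F_V(X).
\]
Equation \eqref{eq:dimFVAM} then follows from the standard fact that every fiber functor on a fusion category preserves Frobenius--Perron dimensions, and that $\FPdim$ on $\sVec$ coincides with $\dim_\kk$; hence $\dim_\kk F_V(X) = \FPdim_\C X$ for each simple $X$.

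I do not anticipate any serious obstacle. The only slightly delicate point is the verification that an arbitrary simple object of an indecomposable semisimple $\C$-module category is a generator (in the sense needed for the internal-End/Morita correspondence), and that step is handled uniformly by the indecomposability argument above. The rest of the proof consists of assembling the internal-End formalism, the classification from Proposition~\ref{prop:modgrpthl}, and the behavior of $F_V$ with respect to dimensions.
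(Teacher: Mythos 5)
Your proposal is correct and follows essentially the same route as the paper: transport along the tensor equivalence, classification of indecomposable semisimple algebras via module categories (Theorem~\ref{thm:ModcA} and Proposition~\ref{prop:modgrpthl}), the internal-End construction $\EEnd(M)$ for the Morita representative, and the dimension count via the decomposition of $A_M$ into simples, the representability isomorphism \eqref{eq:internalend}, and preservation of Frobenius--Perron dimension under the fiber functor. The only difference is cosmetic: where you inline the argument that indecomposability makes any simple (indeed any nonzero) object a generator, the paper simply cites \cite[Theorem~7.10.1]{EGNO}, which packages exactly that fact.
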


\begin{proof}
From the tensor equivalence ${\sf Rep}(H(N,\gamma)) \overset{\otimes}{\sim} \mathcal{C}$ and Theorem \ref{thm:ModcA}, we see that indecomposable semisimple $H(N,\gamma)$-algebras are in bijection with indecomposable semisimple $\mathcal{C}$-module categories, which are all of the form $\M^{K,\alpha}(L,\psi)$ by Proposition~\ref{prop:modgrpthl}.
Then from \cite[Theorem~7.10.1]{EGNO} we have that an indecomposable semisimple module category $\M^{K,\alpha}(L,\psi)$ over $\mathcal{C}$ is equivalent to the category of $A_M$-modules in $\mathcal{C}$, where $M\in \M^{K,\alpha}(L,\psi)$ can be taken to be any nonzero object.

For the dimension calculation, we can decompose $A_M$ as a direct sum of irreducibles in $\mathcal{C}$ to write
\begin{equation} \label{eq:AM}
A_M = \bigoplus_{X \in \textnormal{Irr}(\mathcal{C})} m_X(M) \; X.
\end{equation}
Here we are using that $m_X(M)=\dim_\kk \Hom_{\mathcal{C}}(X, A_M)$ by \eqref{eq:internalend}, 
which gives the multiplicity of $X$ in $A_M$ since $\C$ is semisimple.  Then the dimension calculation follows from \cite[Proposition~4.5.7]{EGNO}: we obtain that  $\dim_\kk F_V(X) = \FPdim_{{\sf Vec}} F_V(X) = \FPdim_{\mathcal{C}} X$.
\end{proof}

\begin{lemma} \label{lem:corresp} There is a bijection between the collection of simple modules over the $\kk$-algebra $F_V(A_M)$ and the collection of simple objects of the category ${\sf Fun}_{\mathcal{C}}(\mathcal{M}_0, \mathcal{M}^{K,\alpha}(L,\psi))$. 
\end{lemma}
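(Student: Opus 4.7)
My plan is to pass through the tensor equivalence $\mathcal{C} \overset{\otimes}{\sim} \Rep(H)$ with $H = H(N,\gamma)$ and then apply the fundamental theorem of Hopf modules.  First I will transport both module categories to the $\Rep(H)$ side: $\mathcal{M}_0$ has rank one with the $\mathcal{C}$-action on its generator $V$ realizing $F_V$, so it corresponds to $\sVec$ as a $\Rep(H)$-module category (with action given by the forgetful functor), while $\mathcal{M}^{K,\alpha}(L,\psi) \sim \Mod_\mathcal{C}\text{-}A_M$ corresponds to $\Mod_{\Rep(H)}\text{-}B$ with $B := F_V(A_M)$.  Thus it suffices to establish a bijection between simple objects of $\Fun_{\Rep(H)}(\sVec, \Mod_{\Rep(H)}\text{-}B)$ and simple right $B$-modules in $\sVec$.

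Next, Example~\ref{ex:Enddual} yields the equivalence $\sVec \sim \Mod_{\Rep(H)}\text{-}H^*$ of $\Rep(H)$-module categories, and combining this with \cite[Proposition~7.11.1]{EGNO} gives
\[
\Fun_{\Rep(H)}(\sVec, \Mod_{\Rep(H)}\text{-}B) \ \sim \ \Bimod_{\Rep(H)}(H^*, B).
\]
An object $W$ of the right-hand side is a vector space carrying compatible left $H$-action, left $H^*$-action (equivalently a right $H$-coaction), and right $B$-action.  The $H$- and $H^*$-actions together with their compatibility make $W$ into a Hopf module, so the fundamental theorem of Hopf modules gives $W \cong H \otimes W^{\mathrm{co}H}$, and the commuting right $B$-action descends to the coinvariants $W^{\mathrm{co}H}$.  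Conversely, any right $B$-module $N$ gives rise to an $(H^*,B)$-bimodule $H \otimes N$ in $\Rep(H)$.  These mutually inverse constructions assemble into an equivalence of abelian categories $\Bimod_{\Rep(H)}(H^*, B) \sim \Mod\text{-}B$, which restricts to the desired bijection on simple objects.

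The hard part will be the last step, where one has to verify that the $B$-action truly preserves the coinvariant subspace $W^{\mathrm{co}H}$ and that the converse construction on $H \otimes N$ produces genuinely compatible actions.  These checks are direct Sweedler-notation computations using the $H$-module-algebra structure of $B$ together with the prescribed $H$-action on $H^*$, but they contain the essential content of the lemma.
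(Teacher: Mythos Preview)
Your proposal is correct and follows essentially the same route as the paper's proof. Both arguments reduce to the chain
\[
\Fun_{\mathcal{C}}(\mathcal{M}_0,\mathcal{M}^{K,\alpha}(L,\psi))
\ \sim\ \Bimod_{\mathcal{C}}(\EEnd(V),A_M)
\ \sim\ \text{right }F_V(A_M)\text{-modules in }\sVec,
\]
invoking \cite[Proposition~7.11.1]{EGNO} for the first step and then using that left $\EEnd(V)$-modules in $\mathcal{C}$ form the category $\sVec$. The paper simply asserts this last identification (``the latter of which is identified with $\sVec$ via the functor $F_V$''), whereas you make it explicit by transporting to $\Rep(H)$ and applying the fundamental theorem of Hopf modules; that theorem is exactly the concrete content behind the paper's one-line claim. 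The only cosmetic difference is that the paper works in $\mathcal{C}$ throughout while you first pass to $\Rep(H)$, but since the two are tensor equivalent this is immaterial.
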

\begin{proof}
Let $B:=\EEnd(V)$, so we have $\M_0\sim\Mod_\C$-$B$ and recall that $\mathcal{M}^{K,\alpha}(L,\psi))\sim \Mod_\C$-$A_M$.  Then ${\sf Fun}_{\mathcal{C}}(\mathcal{M}_0, \mathcal{M}^{K,\alpha}(L,\psi))$ is equivalent to ${\sf Bimod}_\C(B, A_M)$ \cite[Proposition~7.11.1]{EGNO}, which can be identified with the category of right $A_M$-modules in the category of left $B$-modules in $\C$, the latter of which is identified with ${\sf Vec}$ via the functor $F_V$. Thus, ${\sf Fun}_{\mathcal{C}}(\mathcal{M}_0, \mathcal{M}^{K,\alpha}(L,\psi))$ is identified with the category of right $F_V(A_M)$-modules in ${\sf Vec}$ as desired.
\end{proof}

\begin{remark}
Suppose that $P$ is a simple object in ${\sf Mod}_\kk (F_V(A_M))$ with corresponding simple object $P'$ of ${\sf Irr}({\sf Fun}_{\mathcal{C}}(\mathcal{M}_0, \mathcal{M}^{K,\alpha}(L,\psi)))$ as in the lemma above. Then, $$\dim_\kk P= \sqrt{\frac{\dim_\kk F_V(A_M)}{\textnormal{FPdim } \mathcal{C}}} \; \textnormal{FPdim} \; P'.$$ 
Indeed, consider the action of $\mathcal{C}_{\mathcal{M}_0}^*$ on ${\sf Fun}_{\mathcal{C}}(\mathcal{M}_0, \mathcal{M}^{K,\alpha}(L,\psi))$ via precomposition. Here, $\mathcal{C}_{\mathcal{M}_0}^*$ is the representation category of the dual Hopf algebra $H(N,\gamma)^*$; see Example~\ref{ex:Enddual}.
Now for any $H^*$-module $X$,
we have that $$\dim_\kk (X\otimes P)=(\dim_\kk X)(\dim_\kk P)=({\sf FPdim}\; X)(\dim_\kk P).$$
This means that there exists a positive number $\lambda$ such that $\dim_\kk P= \lambda ({\sf FPdim}\; P')$, as a function satisfying the displayed equality above is a Frobenius-Perron eigenvector and thus is unique up to scaling. We get the value $\lambda$ taking the sum of squares of the last equation.\end{remark}

%%%%%%%%%%%%%%%%%

\subsection{${\sf Rep}(H_8)$-tensor algebras}  \label{sec:H8}
We now consider Example \ref{ex:H8gt} in more detail.
Consider the groups \linebreak $N=\langle x,y ~|~ x^2= y^2 = e, \; xy=yx\rangle$ and $K=\langle z ~|~ z^2 =e\rangle$ from Example~\ref{ex:D8nontriv} with the $N$-action on $K$ trivial, and the $K$-action on $N$ given by $z \cdot x = y$ and $z \cdot y = x$. Thus, $G = N \rtimes K \cong D_8$.  Taking the cocycle $\omega$ of \eqref{eq:D8nontriv}, we obtain the group-theoretical fusion category $\mathcal{C}(G, \omega, K, 1)$.

To obtain a Hopf algebra, we construct a fiber functor as in Section \ref{sec:recon} by letting $\M_0 = \M^{K,1}(N,\mu)$ with $\mu$ the nontrivial 2-cocycle on $N$ of \eqref{eq:Z2xZ2}. Indeed, $d \mu = \omega|_N$ as 
\[
\begin{array}{rl}
d \mu (x^{i_1}y^{j_1},~ x^{i_2}y^{j_2}, ~x^{i_3}y^{j_3}) 
&= \mu(x^{i_1}y^{j_1},~ x^{i_2+i_3}y^{j_2+j_3})\;
\mu(x^{i_2}y^{j_2}, ~x^{i_3}y^{j_3}) \\
\medskip
& \quad \mu^{-1}(x^{i_1+i_2}y^{j_1+j_2}, ~x^{i_3}y^{j_3}) \; \mu^{-1}(x^{i_1}y^{j_1},~ x^{i_2}y^{j_2})\\
& = (-1)^{j_1(i_2+i_3)+j_2i_3 - (j_1+j_2)i_3 - j_1 i_2} = 1,
\end{array}
\]
and $\omega|_N$ is trivial.
The resulting Hopf algebra is $H(N,\mu)=H_8$, the unique semisimple, noncommutative, noncocommutative Hopf algebra of dimension~8 (up to isomorphism), and we have $\mathcal{C}(G, \omega, K, 1) \overset{\otimes}{\sim} {\sf Rep}(H_8)$.
See \cite[Theorem~2.13]{Masuoka:dim6-8} or \cite[Section~16.3]{Radford} for the presentation of $H_8$ below.
 
 \begin{definition} \label{def:H8}
\cite{Kac1968}
The {\it Kac-Paljutkin Hopf algebra} $H_8$ is defined by generators $x, y, z$ subject to relations 
 $$\textstyle x^2=1, \quad y^2= 1, \quad xy=yx, \quad zx=yz, \quad zy=xz, \quad z^2 = \frac{1}{2}(1 + x+ y-xy),$$ where $x$ and $y$ are grouplike elements, and 
 $$\Delta(z) =\textstyle \frac{1}{2}(1 \otimes 1 + y \otimes 1 + 1 \otimes x - y \otimes x)(z \otimes z), \quad \epsilon(z) =1, \quad S(z) = z.$$ 
 \end{definition} 
 
Recall that $H_8$ has 5 isomorphism classes of irreducible representations, which have the following explicit descriptions (see, for example, \cite[p.~530]{Radford}):
\begin{equation}\label{eq:H8reps}
\begin{array}{l}
\medskip
W_0 := \kk^2,\qquad x\cdot(v_1,v_2)=(-v_1,v_2),\quad y\cdot (v_1,v_2)=(v_1,-v_2), \quad z\cdot (v_1,v_2)=(v_2,v_1);\\
W_1:=\kk _{1,1,1}; \qquad \quad W_2:=\kk _{1,1,-1}; \qquad  \quad W_3:=\kk _{-1,-1,\sqrt{-1}} ; \qquad \quad W_4:=\kk _{-1,-1,-\sqrt{-1}} ;
\end{array}
\end{equation}
where $\kk_{\lambda,\lambda',\lambda''}$ denotes the 1-dimensional representation with $x$-, $y$,- $z$-action being scalar multiplication by $\lambda$, $\lambda'$, $\lambda''$, respectively.

\begin{lemma}\label{lem:H8match}
Via the equivalence $\mathcal{C}(G, \omega, K, 1) \overset{\otimes}{\sim} {\sf Rep}(H_8)$, each simple object $X_i \in \C(G, \omega,K,1)$ from Example~\ref{ex:H8gt} corresponds to $W_i \in \textnormal{Irr}(\Rep(H_8))$ in \eqref{eq:H8reps}.
\end{lemma}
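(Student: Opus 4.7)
The plan is to match the two indexings by comparing Frobenius--Perron dimensions, the tensor unit, and the group structure of the invertible simple objects, using Propositions~\ref{prop:Fun-Vec} and~\ref{prop:modgrpthl} on the $\mathcal{C}$-side and the explicit description \eqref{eq:H8reps} on the $\Rep(H_8)$-side.

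First, I would identify $X_0 \leftrightarrow W_0$ via dimensions. Any fiber functor preserves Frobenius--Perron dimensions, so $\dim_\kk F_V(X) = \FPdim_\C X$; equivalently this follows from Lemma~\ref{lem:AM} applied to $A_{X_i} = \EEnd(X_i)$. By Proposition~\ref{prop:Fun-Vec}, $X_0$ is supported on the four-element double coset $KxK = \{x, y, xz, yz\}$ while each $X_i$ for $i \geq 1$ is supported on a two-element double coset with $\dim \rho = 1$. Together with $\sum_i (\FPdim X_i)^2 = \FPdim \C = |G| = 8$, this forces $\FPdim X_0 = 2$ and $\FPdim X_i = 1$ for $i \geq 1$. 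Since $W_0$ is the unique 2-dimensional irreducible $H_8$-module, this yields $F_V(X_0) \cong W_0$.

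Second, I would identify $X_1 \leftrightarrow W_1$ as the tensor units. Under the equivalence $\C(G, \omega, K, 1) \overset{\otimes}{\sim} \Bimod_{{\sf Vec}_G^\omega}(A(K,1))^{\rm op}$, the tensor unit is the identity bimodule $A(K,1)$ itself, which in the parametrization of Proposition~\ref{prop:Fun-Vec} is exactly $M(e, \rho^K_{\text{triv}}) = X_1$. Since the fiber functor $F_V$ is a tensor functor, $F_V(X_1)$ is the trivial character $W_1$.

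Third, I would distinguish $X_2, X_3, X_4$ by fusion. Using Proposition~\ref{prop:Fun-Vec} with the explicit 3-cocycle \eqref{eq:D8nontriv} and the associated 2-cocycles from \eqref{eq:aijg}, I would check that $X_2 \otimes X_2 = X_1$, while $X_3 \otimes X_3 = X_2$ and $X_3 \otimes X_4 = X_1$. On the $\Rep(H_8)$-side, $W_2^{\otimes 2} = W_1$ is the only tensor-square that equals the unit among the nontrivial 1-dimensional characters, whereas $W_3^{\otimes 2} = W_2$ and $W_3 \otimes W_4 = W_1$. This forces $F_V(X_2) \cong W_2$ and $\{F_V(X_3), F_V(X_4)\} = \{W_3, W_4\}$; the residual assignment is fixed only up to the complex-conjugation automorphism $z \mapsto -z$ of $H_8$, i.e., up to the choice of $\sqrt{-1}$ in~\eqref{eq:H8reps}, which is taken by convention.

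The main obstacle lies in the fusion computations of Step~3: the bimodule tensor products in $\Bimod_{{\sf Vec}_G^\omega}(A(K,1))$ require direct substitution of the cocycle \eqref{eq:D8nontriv} into \eqref{eq:aijg} and tracking which pair $(g,\rho)$ appears in each product. While the computation is mechanical, notational bookkeeping around the cocycle $\omega$ is cumbersome.
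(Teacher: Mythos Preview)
Your proposal is correct, and Steps~1 and~2 match the paper's proof exactly. The difference lies in Step~3: to separate $X_2$ from $\{X_3,X_4\}$, you compute fusion rules in $\Bimod_{{\sf Vec}_G^\omega}(A(K,1))$ and identify $X_2$ as the unique order-two element of the group of invertibles, which entails substituting the explicit 3-cocycle~\eqref{eq:D8nontriv} into~\eqref{eq:aijg}. The paper instead uses a Galois-type argument: complex conjugation (replacing $\sqrt{-1}$ by $-\sqrt{-1}$ in $\omega$) permutes $W_3$ and $W_4$ while fixing $W_2$, and on the $\mathcal{C}$-side $X_2 = M(e,\rho_{\text{sign}}^K)$ is visibly fixed since the relevant 2-cocycle on the identity double coset does not involve $\sqrt{-1}$, whereas $X_3, X_4$ live on the $xy$-coset where the cocycle does. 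This sidesteps all fusion computation. Your approach has the virtue of being purely internal to the fusion ring, so it would transfer to settings where no natural field automorphism is available; the paper's argument is faster here precisely because it exploits the special form of $\omega$. Both arguments agree that the residual pairing $X_3 \leftrightarrow W_3$ versus $X_3 \leftrightarrow W_4$ is a matter of convention.
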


\begin{proof}
It is clear that the 2-dimensional objects should be in correspondence, so $W_0$ corresponds to $X_0$. Also the unit object is $W_1$ in one realization and $X_1$ in the other, so they correspond. Now, $W_3$, $W_4$ are permuted by complex conjugation, but $X_2$ is clearly fixed by conjugation, so $W_3$, $W_4$ must correspond to $X_3$, $X_4$. Note that whether $W_3$ corresponds to $X_3$ or $X_4$ depends on the choice of the 3-cocycle $\omega$ (whether we use $\sqrt{-1}$ or $-\sqrt{-1}$ in its formula), but we make a choice for $\omega$ so that $W_3$ corresponds to $X_3$ and $W_4$ to $X_4$. 
\end{proof}

\brk
We now classify indecomposable semisimple algebras in $\Rep(H_8)$ using the general theory developed above.  
We start by proving that the following list of semisimple $H_8$-module algebras is a classification, up to Morita equivalence, then afterwards explain how to obtain them from the tensor categorical approach.  The decomposition of each algebra as an $H_8$-module is noted for future reference; these can be directly computed.

\medskip

\begin{enumerate}[(i)]
\item $S=\kk$ with $x, y, z$ acting as the identity, so $S\cong W_1$.
\smallskip
\item $S=\kk^2$ with $x, y$ acting as the identity and $z\cdot (a, b)=(b,a)$  for all $(a,b) \in \kk^2$, so $S\cong W_1\oplus W_2$.
\smallskip
\item $S=\kk^2$ with 
\begin{equation*}
x\cdot (a, b)=y\cdot (a, b)=(b,a), \qquad z\cdot (a, b)=(a\theta+b\bar{\theta},\,b\theta+a\bar{\theta})
\end{equation*}
where $\theta=\frac{1}{2}(1+i)$ and $\bar{\theta}=\frac{1}{2}(1-i)$, so $S\cong W_1\oplus W_3$.
\smallskip
\item $S=\kk^4$ with
\begin{equation*}
x\cdot (a, b, c, d)=(a,b, d, c), \qquad y\cdot (a, b, c, d)=(b, a, c, d), \qquad z\cdot (a, b, c, d)=(c, d, a, b),
\end{equation*}
so $S\cong W_0\oplus W_1 \oplus W_2$.
\item $S=\kk^4$ with
\begin{equation*}
x\cdot (a, b, c, d)= y\cdot (a, b, c, d)=(b, a, d, c), \qquad z\cdot (a, b, c, d)=(c\theta+d\bar{\theta},\,d\theta+c\bar{\theta},\, a\theta+b\bar{\theta},\,b\theta+a\bar{\theta})
\end{equation*}
where $\theta=\frac{1}{2}(1+i)$ and $\bar{\theta}=\frac{1}{2}(1-i)$, as above, so $S\cong W_1\oplus W_2\oplus W_3 \oplus W_4$.
\medskip
\item $S = H_8^*$ with action $(h \cdot f)(t) = f(S(h)t)$ for $h,t \in H_8$ and $f \in H_8^*$, so $S\cong W_0^{\oplus 2} \oplus W_1\oplus W_2\oplus W_3 \oplus W_4$.
\end{enumerate}

\medskip

\begin{theorem} \label{prop:H8Morita}
The algebras \textnormal{(i)}--\textnormal{(vi)} above are pairwise Morita inequivalent in the tensor category $\Rep(H_8)$, and every indecomposable semisimple algebra in ${\sf Rep}(H_8)$ is Morita equivalent to one on this list.
\end{theorem}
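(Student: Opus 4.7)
The plan is to invoke the general classification from Section~\ref{sec:recon}. By Theorem~\ref{thm:ModcA} applied to the equivalence $\Rep(H_8) \overset{\otimes}{\sim} \mathcal{C}(D_8, \omega, \langle z \rangle, 1)$, together with Example~\ref{ex:CoK1mods}, the Morita equivalence classes of indecomposable semisimple algebras in $\Rep(H_8)$ are in bijection with the six module categories $\M^{\langle z \rangle, 1}(L,1)$ where $L$ runs over the subgroups in~\eqref{eq:D8subgroups}. Hence there are exactly six such classes, and the proof reduces to exhibiting a representative of each and matching it with the list (i)--(vi).

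For each such $L$, I would apply Lemma~\ref{lem:AM}: pick a simple object $M \in \M^{\langle z \rangle, 1}(L,1)$, compute $A_M = \EEnd(M) = \bigoplus_i m_{X_i}(M)\, X_i \in \C$ via~\eqref{eq:AM}, and apply the fiber functor $F_V$. Since $F_V(X_i) = W_i$ by Lemma~\ref{lem:H8match}, the $H_8$-module structure of each representative follows immediately. Two cases resolve at once: for $L = \langle z \rangle$ we have $\M^{\langle z \rangle, 1}(\langle z \rangle, 1) \sim \Rep(H_8)$ itself with $A_M = \mathbf{1} = \kk$, matching (i); and for $L = \langle x, y \rangle$ we have $\M \sim \M_0$, so $F_V(\EEnd(V)) \cong H_8^*$ by Example~\ref{ex:Enddual}, matching (vi). For the remaining four subgroups $L \in \{\langle e \rangle,\, \langle x \rangle,\, \langle xy \rangle,\, \langle xy, z \rangle\}$, I would identify the simple objects of $\M^{\langle z \rangle, 1}(L,1)$ using Proposition~\ref{prop:Fun-Vec} and compute the multiplicities $m_{X_i}(M)$ by analyzing how each simple $X_i$ (from Example~\ref{ex:H8gt}) acts on $M$; this should yield four distinct $H_8$-module decompositions matching those of (ii)--(v).

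To close the argument, I would verify that each listed algebra is indeed indecomposable semisimple in $\Rep(H_8)$ and that its multiplication agrees with the one on $F_V(\EEnd(M))$ induced by composition of internal endomorphisms---equivalently, that $\Mod_{\Rep(H_8)}$-$S$ is equivalent to the predicted $\M^{\langle z \rangle, 1}(L,1)$. Once matched, pairwise Morita-inequivalence of (i)--(vi) is automatic, since the six algebras correspond to six distinct module categories via the bijection above.

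The main technical obstacle is the explicit computation of multiplicities $m_{X_i}(M)$ in the four middle cases, which requires careful use of the cocycle formula~\eqref{eq:aijg} and the representation theory of the intersections $\langle z \rangle \cap g L g^{-1}$ for double coset representatives $g$. A subtler issue is confirming that the specific $\kk^n$-algebra structures listed in (ii)--(v) actually coincide with internal End algebras and not merely with their underlying $H_8$-modules, since Morita equivalent algebras in $\Rep(H_8)$ need not be isomorphic as $H_8$-module algebras; one must either track the multiplication through the internal End construction or argue that each listed algebra realizes the correct module category over $\Rep(H_8)$.
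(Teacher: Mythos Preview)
Your approach is viable but takes a harder route than the paper. The paper's proof does not attempt to match the algebras (i)--(vi) with specific subgroups $L$ at all. Instead, having established (via Theorem~\ref{thm:param}, Example~\ref{ex:D8nontriv}, and Proposition~\ref{prop:modgrpthl}) that there are exactly six Morita classes, it simply verifies that the six listed algebras are pairwise Morita inequivalent. The key observation is that Morita equivalence in $\Rep(H_8)$ implies Morita equivalence in $\Rep(H')$ for any Hopf subalgebra $H' \subseteq H_8$. Taking $H' = \kk$, Artin--Wedderburn distinguishes all pairs except (ii)/(iii) and (iv)/(v); these two remaining pairs are separated by restricting to $H' = \kk\langle x \rangle$ and counting indecomposable summands there. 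This completely sidesteps the obstacle you correctly identified---the need to confirm that the listed $\kk^n$-algebras realize the internal End algebras as \emph{algebras}, not merely as $H_8$-modules.

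The explicit matching you propose is in fact carried out in the paper, but as a separate result (Proposition~\ref{prop:H8match}) \emph{after} Theorem~\ref{prop:H8Morita} is already established. There, only the $H_8$-module decompositions are compared, which suffices precisely because the theorem is already in hand: knowing the six algebras represent the six distinct classes, the module decomposition pins down which is which. Your route would require either tracking the multiplication through the internal End construction or directly exhibiting the module category $\Mod_{\Rep(H_8)}$-$S$ for each $S$---both doable, but considerably more work than the paper's restriction argument.
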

\begin{proof} 
Using Theorem \ref{thm:param}, Example~\ref{ex:D8nontriv}, and Proposition~\ref{prop:modgrpthl}, we see that there are 6 equivalence classes of indecomposable semisimple $H_8$-module algebras (which serve as base algebras of ${\sf Rep}(H_8)$-tensor algebras).  Since we have produced a list of 6 such algebras, it is enough to show that the algebras in the list are pairwise Morita inequivalent.

Note that if two algebras $S_1$ and $S_2$ are Morita equivalent as $H$-module algebras, then they are also Morita equivalent as $H'$-module algebras, for any Hopf subalgebra $H'$ of $H$. In particular, by taking $H' = \kk$, we have by the Artin-Wedderburn theorem that it suffices to verify that the algebras $S$ in (ii) and (iii), and that the algebras $S$ in (iv) and (v), are Morita inequivalent. For the former, take $H'$ to be the Hopf subalgebra $\langle x \rangle$ generated by $x$; observe that $S$ in (ii) is decomposable in ${\sf Rep}(\langle x \rangle)$, whereas $S$ in (iii) is not. For the latter, observe that $S$ in (iv) has three indecomposable summands in ${\sf Rep}(\langle x \rangle)$, whereas $S$ in (v) only has two indecomposable summands. So, the result holds.
\end{proof}

The following example illustrates our classification.

\begin{example}
Consider the adjoint action of $H_8$ on itself, recalling the algebra decomposition $H_8 \cong \kk^4 \oplus {\sf Mat}_2(\kk)$.
The ideal $S={\sf Mat}_2(\kk) \subset H_8$ is $H_8$-stable with action
\begin{equation*}
x\cdot \begin{bmatrix}a & b\\c & d\end{bmatrix}=\begin{bmatrix}a & -b\\-c & d\end{bmatrix}, \qquad
y\cdot \begin{bmatrix}a & b\\c & d\end{bmatrix}=\begin{bmatrix}a & -b\\-c & d\end{bmatrix}, \qquad
z\cdot \begin{bmatrix}a & b\\c & d\end{bmatrix}=\begin{bmatrix}d & c\\-b & a\end{bmatrix},
\end{equation*}
and is an indecomposable semisimple algebra in ${\sf Rep}(H_8)$ which is not on our list above.  However, using  Example~\ref{ex:S=1} we see it is Morita equivalent to the algebra in (i) above via ${\sf Mat}_2(\kk) \cong W_0 \otimes W_0^*$ in $\Rep(H_8)$, where $W_0$ is as in~\eqref{eq:H8reps}.
\end{example}

\begin{remark}
In the algebra (iii) we see that the trace of the action of $z$ is $2\theta=1+i$.  Therefore if we also consider the $H_8$-module algebra obtained by switching the roles of $\theta, \bar\theta$ (complex conjugation), it has the same dimension as the algebra in (iii), but they are not isomorphic as $H_8$-module algebras.  They are Morita equivalent, however, by the same \red{reasoning as in} the proof of Theorem \ref{prop:H8Morita}.  
\end{remark}

Recall from Example \ref{ex:CoK1mods} that the indecomposable semisimple module categories over (and thus indecomposable algebras in) $\mathcal{C}(G, \omega, K, 1)$ are in bijection with the subgroups $L \leq G$ of \eqref{eq:D8subgroups}. We now show how to match these subgroups with the algebras (i)--(vi).

\begin{proposition}\label{prop:H8match}
The following correspondence matches the indecomposable semisimple algebras in \linebreak $\C(G,\omega,K,1)$ and indecomposable semisimple $H_8$-algebras.  That is, for each pairing $L\leftrightarrow S$ indicated below, we have $F_V(A_M) \cong S$ in $\Rep(H_8)$, where $M \in \M^{K,1}(L,1)$ is any simple object.
\begin{equation}\label{eq:match}
\langle e \rangle \leftrightarrow \textnormal{(ii)} \qquad
\langle x \rangle \leftrightarrow \textnormal{(iv)} \qquad
\langle xy \rangle \leftrightarrow \textnormal{(v)} \qquad
\langle z \rangle \leftrightarrow \textnormal{(i)} \qquad
\langle x,y \rangle \leftrightarrow \textnormal{(vi)} \qquad
\langle xy,z \rangle \leftrightarrow \textnormal{(iii)}
\end{equation}
\end{proposition}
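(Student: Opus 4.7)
The plan is to verify the six pairings in \eqref{eq:match} individually by computing, for each subgroup $L$ in \eqref{eq:D8subgroups}, the $H_8$-algebra $F_V(A_M)$ for a representative simple object $M \in \M^{K,1}(L,1)$ and comparing it against the listed $S$. The existence of \emph{some} bijection between subgroups in \eqref{eq:D8subgroups} and the Morita classes (i)--(vi) is automatic: Example~\ref{ex:CoK1mods} enumerates the six indecomposable semisimple $\C$-module categories, Theorem~\ref{thm:ModcA} converts these to Morita classes of indecomposable semisimple algebras in $\C \overset{\otimes}{\sim} \Rep(H_8)$, and Theorem~\ref{prop:H8Morita} confirms that (i)--(vi) are six Morita-inequivalent representatives. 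So the substance is to trace the correspondence explicitly.

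The computational strategy for each $L$ proceeds as follows. First, use Proposition~\ref{prop:Fun-Vec} to enumerate the simple objects of $\M^{K,1}(L,1)$ and fix a convenient representative $M = M(g,\rho)$. Next, compute $m_X(M) = \dim_\kk \Hom_{\M^{K,1}(L,1)}(X \otimes M, M)$ for each simple $X \in \Irr(\C)$, yielding the decomposition $A_M = \bigoplus_i m_{X_i}(M)\, X_i$ from \eqref{eq:AM}. Then, by Lemma~\ref{lem:H8match}, applying $F_V$ converts this to the $H_8$-module decomposition $F_V(A_M) \cong \bigoplus_i m_{X_i}(M)\, W_i$, which one matches against the $H_8$-module structures of (i)--(vi) recorded at their definition. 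The identification is determined first by dimension $\sum_i m_{X_i}(M)\dim_\kk W_i$ and, when two options share a dimension, by the $H_8$-module type.

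Two pairings follow immediately without heavy computation. For $L = \langle z \rangle = K$, the module category $\M^{K,1}(K,1)$ is equivalent to $\C$ itself under the regular action, so taking $M = \mathbf{1}$ gives $A_M = \mathbf{1}$ and $F_V(A_M) = \kk$, matching (i). For $L = \langle x, y \rangle = N$, Example~\ref{ex:D8nontriv} shows that $(N,1) \sim (N,\mu)$ define equivalent ${\sf Vec}_G^\omega$-module categories, so $\M^{K,1}(N,1) \sim \M_0$ has a single simple object $V$; then Example~\ref{ex:Enddual} yields $F_V(\EEnd(V)) \cong H_8^*$, matching (vi). For the four intermediate cases, one chooses $M$ supported on the double coset containing the identity and carries out the $m_X(M)$ computation via Proposition~\ref{prop:Fun-Vec}.

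The main obstacle is distinguishing (ii) from (iii) (both dimension 2, both $\kk^2$ as $\kk$-algebras) and (iv) from (v) (both dimension 4, both $\kk^4$ as $\kk$-algebras). This requires careful evaluation of the twisted 2-cocycle $\psi^g_{K,L}$ of \eqref{eq:aijg} using the specific 3-cocycle $\omega$ of \eqref{eq:D8nontriv}, followed by counting irreducible $\psi^g_{K,L}$-representations of $K \cap gLg^{-1}$ via Theorem~\ref{thm:schur}. The distinguishing invariants are whether the 1-dimensional summand of $F_V(A_M)$ is $W_2$, producing (ii), or $W_3$, producing (iii); and whether the decomposition contains $W_0$, producing (iv), or is a direct sum of four 1-dimensional summands, producing (v). The nontrivial 3-cocycle enters through the $\sqrt{-1}$-valued component $\tau_z$ in \eqref{eq:tau} whenever the coset representative $g$ has a nonzero $z$-component; this is what forces the appearance of $W_3$ in (iii) and $W_3, W_4$ in (v) in place of the $W_2$ summands one would see if $\omega$ were trivial.
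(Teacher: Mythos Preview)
Your approach is essentially the same as the paper's: both compute the multiplicities $m_X(M)$ via \eqref{eq:AM} and use Lemma~\ref{lem:H8match} to match the resulting decomposition of $A_M$ against the recorded $H_8$-module structures of (i)--(vi). The paper works out $L = \langle e \rangle$ and $L = \langle xy, z \rangle$ explicitly (obtaining $A_M = X_1 \oplus X_2$ and $A_M = X_1 \oplus X_3$, respectively) and leaves the remaining four as similar computations; your shortcuts for $L = \langle z \rangle$ and $L = \langle x, y \rangle$ via $\EEnd(\mathbf{1}) = \mathbf{1}$ and Example~\ref{ex:Enddual} are a pleasant variation but not a genuinely different route.
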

\begin{proof}
The multiplicity of each irreducible representation of $H_8$ in the algebras (i)--(vi) has already been described above.  We will make the match by computing the multiplicity $m_X(M)$ of each $X \in \textnormal{Irr}(\mathcal{C}(G, \omega, K, \alpha))$ in $A_M$ using \eqref{eq:dimFVAM}, and comparing these values using Lemma \ref{lem:H8match}. In each case, take $M = M(KeL, \rho_\text{triv}^{K \cap L})$.

\brk
For $L=\langle e\rangle$, we have that $K \cap L = \langle e \rangle$. Moreover $X_0 \otimes M$ is supported on $K x L \cup K y L$; so, $m_{X_0}(M) = 0$. For $i = 1,2$, we have that $X_i \otimes M$ is supported on $K e L$; so, $m_{X_i}(M) = 1$. Lastly, for $j = 3,4$, we get that  $X_j \otimes M$ is supported on $K xy L$; so, $m_{X_j}(M) = 0$. Therefore, $A_M = X_1 \oplus X_2$ as an object in $\mathcal{C}$ in this case, thus it matches with (ii) where $S\cong W_1\oplus W_2$ in $\Rep(H_8)$.

\brk
On the other hand, take $L=\langle xy, z \rangle$ and we get $K \cap L =K$. Again, $X_0 \otimes M$ is supported on $K x L \cup K y L$; so, $m_{X_0}(M) = 0$. For $i = 1,2, 3, 4$, we have that $X_i \otimes M$ is supported on $K e L$, but the 2-cocycle on $K \cap L$ is trivial if and only if $i =1,3$ here. Therefore, $A_M = X_1 \oplus X_3$ as an object in $\mathcal{C}$ in this case, thus it matches with (iii) where $S\cong W_1\oplus W_3$ in $\Rep(H_8)$.

\brk
The 4 remaining matchings of the algebras $S$ with the subgroups $L$ are computed similarly.
\end{proof}

\begin{remark}
The $H_8$-module algebras (i)--(vi) were originally obtained by a variety of \emph{ad hoc} methods including hand computation and Maple code, and we were able to match them with the pairs $(L,\psi)$ arising in Proposition~\ref{prop:OstNat} after the fact.
We note that all the algebras in our list occur as coideal subalgebras of $H_8^*$ (see the lattice diagram in \cite[Figure~1]{DT11}).  In that diagram, the only Morita equivalences as $H_8$-module algebras are between I1 and I2, and between J2 and J4.

For a semisimple Hopf algebra $H$ in general, an indecomposable semisimple $H$-module algebra $S$ is isomorphic as an $H$-module algebra to a coideal subalgebra of $H^*$ if and only if $S$ has a 1-dimensional representation, which is why all 6 algebras for $H_8$ come as coideal subalgebras. However, different coideal subalgebras can be Morita equivalent and even isomorphic as $H$-module algebras.  In fact, every 1-dimensional representation of such an algebra $S$ gives a realization of $S$ as a coideal subalgebra, by composing the coaction map with this representation; such coideal subalgebras may or may not be the same for different 1-dimensional representations.  We leave this observation as a starting point for further investigation; cf. \cite[Lemma~3.9]{EtingofWalton}.
\end{remark}

\brk
Regarding indecomposable bimodules, Example \ref{ex:D8nontriv} yields examples  of such bimodules via  Theorem \ref{thm:2categ} and the equivalence ${\sf Rep}(H_8) \; \overset{\otimes}{\sim} \; \mathcal{C}(G, \omega, K, 1)$.  While we have explicit formulas in examples for actions of $H_8$ on some bimodules, we leave the systematic study of this to future work.

%%%%%%%%%%%%%%%%%
%%%%%%%%%%%%%%%%%
%%%%%%%%%%%%%%%%%

%%%%%%%%%%%%%%%%%
%%%%%%%%%%%%%%%%%
%%%%%%%%%%%%%%%%%

\section{Path algebras in group-theoretical fusion categories} \label{sec:Path}
In this section, we return to one of the original motivations of this work and classify path algebras that admit a grade-preserving action of a semisimple Hopf algebra $H$. Here,  we restrict our attention to such $H$ whose representation category is group-theoretical [Definition~\ref{def:grpthl}]. 

\brk
For now, let us fix $\mathcal{C} = (\mathcal{C}, F)$ a finite tensor category equipped with a fiber functor $F: \C \to {\sf Vec}$, and begin by defining below a $\mathcal{C}$-path algebra, which is a special type of $\mathcal{C}$-tensor algebra [Definition~\ref{S,E}].

\begin{definition} \label{def:Cpath}
We say that a $\mathcal{C}$-tensor algebra $T_S(E)$ is a {\it $\mathcal{C}$-path algebra} if $F(S)$ is a commutative $\kk$-algebra. In this case, we say that $S$ is {\it $\kk$-commutative}, for short.
\end{definition}

The notion of whether an exact algebra $S$ in $\mathcal{C}$ is $\kk$-commutative depends on its Morita equivalence class in $\C$, but it does not depend on the choice of $F$: indeed, we have  $\dim_\kk F(S) = \FPdim_{{\sf Vec}} F(S) = \FPdim_{\mathcal{C}} S$ \cite[Proposition~4.5.7]{EGNO}, and computing this dimension is key to checking this property for a given algebra.  The terminology is motivated as follows. 

\begin{remark} \label{rem:comm}
If the base algebra $S$ of $T_S(E)$ is $\kk$-commutative, then $F(S)$ is a semisimple, finite-dimensional commutative $\kk$-algebra, and thus, is a product of fields. In this case, $F(S)$ can be realized as the path algebra $\kk Q_0$ on a \red{finite set of vertices} $Q_0$.  By choosing an appropriate basis $Q_1$ of the generating bimodule $E$, we can construct a quiver $Q=(Q_0, Q_1)$ whose path algebra inherits a Hopf action of $H:=\End(F)$ and $T_S(E) \cong \kk Q$ as $H$-module algebras.
\end{remark}

For $\mathcal{C}=\mathcal{C}(G, \omega, K, \alpha)$ a group-theoretical fusion category equipped with a fiber functor, a condition when a $\C$-tensor algebra is a $\C$-path algebra is established in Corollary~\ref{cor:pathgt} in Section~\ref{sec:Cpath}. This is achieved by studying when the algebras $A_M$ from Lemma~\ref{lem:AM} are $\kk$-commutative [Proposition~\ref{prop:inequality}]. Then, the special case when the group $G$ has an exact factorization is examined in Section~\ref{sec:exactfact}; we end that section by continuing Example~\ref{ex:H8gt} and the work of Section~\ref{sec:H8} for ${\sf Rep}(H_8)$.

\subsection{Indecomposable $\kk$-commutative algebras in $\mathcal{C}(G, \omega, K, \alpha)$} \label{sec:Cpath} We continue as in Notation \ref{not:H(N,gamma)} to fix a group-theoretical fusion category $\mathcal{C}:=\mathcal{C}(G, \omega, K, \alpha)$ equipped with fiber functor $F_V$, so that it is tensor equivalent to ${\sf Rep}(H(N,\gamma))$. We also fix $A_M$ for $M =M(g,\rho) \in \M:=\mathcal{M}^{K,\alpha}(L,\psi)$ as in Lemma~\ref{lem:AM}.

\brk
The results of this section require use of Frobenius-Perron dimension, which can be reviewed in \cite[Chapters~3 and~6]{EGNO}.
Recall that the \emph{regular objects} of $\mathcal{C}$ \cite[Definition~6.1.6]{EGNO} and $\mathcal{M}$ are
\begin{equation}\label{eq:regularobjects}
\red{R_{\mathcal{C}} =  \sum_{X \in \textnormal{Irr}(\mathcal{C}) } (\FPdim_{\mathcal{C}}X)  X, \qquad R_{\mathcal{M}} = \sum_{Z \in \textnormal{Irr}(\mathcal{M}) } (\FPdim_{\mathcal{M}}Z)  Z,}
\end{equation}
where we use the canonical normalization to define $\FPdim_\mathcal{M}$, meaning $\FPdim_{\mathcal{M}} R_\mathcal{M} = \FPdim_\mathcal{C} R_\mathcal{C}$ \cite[Exercise~7.16.8]{EGNO}. \red{Note that these regular objects lie in the Grothendieck groups of $\mathcal{C}$ and $\mathcal{M}$, respectively, i.e. they are virtual objects rather than actual ones (e.g. FPdim$_\mathcal{M}$ may not be an integer so these are not well defined objects in general).}

\begin{lemma} \label{lem:FPdimM}
Let $M_i=M(g_i,\rho_i)$ be the simple objects of $\mathcal{M}$. Then we have for all $i$ that
\begin{equation}
{\sf FPdim}_\mathcal{M} M_i = \frac{\sqrt{|K| \; |L|}}{|K\cap g_iLg_i^{-1}|} \; \dim_\kk \rho_i.
\end{equation}
\end{lemma}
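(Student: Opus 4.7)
The plan is to extract $\FPdim_\M(M_i)$ by evaluating $M_i$, viewed as a $\sVec_G^\omega$-module functor $\M(K,\alpha)\to\M(L,\psi)$, on the regular module of $\M(K,\alpha)$, and invoking multiplicativity of Frobenius--Perron dimension under composition of module functors. The computation then reduces to transitivity of the $\sVec_G^\omega$-action on simples of $\M(K,\alpha)$ and $\M(L,\psi)$, for which the canonical normalization pins down the relevant Frobenius--Perron dimensions.

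By Theorem~\ref{thm:2categ} and Proposition~\ref{prop:Fun-Vec} I identify $\M^{K,\alpha}(L,\psi)$ with $\Bimod_{\sVec_G^\omega}(A(K,\alpha),A(L,\psi))$, so that $M_i=M(g_i,\rho_i)$ is a simple bimodule in $\sVec_G^\omega$ supported on the double coset $Kg_iL$ with each graded component of $\kk$-dimension $\dim_\kk\rho_i$; using $|Kg_iL|=|K||L|/|K\cap g_iLg_i^{-1}|$, this gives $\dim_\kk M_i = (|K||L|/|K\cap g_iLg_i^{-1}|)\,\dim_\kk\rho_i$. Also, in both $\M(K,\alpha)$ and $\M(L,\psi)$ the simple objects form a single $\sVec_G^\omega$-orbit and so share a common FP dimension, which the canonical normalization $\sum(\FPdim)^2=|G|$ (applied to the $|G|/|K|$ and $|G|/|L|$ simples respectively) identifies as $\sqrt{|K|}$ and $\sqrt{|L|}$. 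Now evaluating $M_i$ on the regular right $A(K,\alpha)$-module $N:=A(K,\alpha)\in\M(K,\alpha)$ produces $M_i(N)\cong M_i$ as a right $A(L,\psi)$-module, whose support $Kg_iL$ splits into $|K|/|K\cap g_iLg_i^{-1}|$ right $L$-cosets, each contributing $\dim_\kk\rho_i$ copies of a simple of $\M(L,\psi)$; hence
\[
\FPdim_{\M(L,\psi)}\bigl(M_i(N)\bigr) \;=\; \frac{|K|\,\sqrt{|L|}}{|K\cap g_iLg_i^{-1}|}\,\dim_\kk\rho_i.
\]
The multiplicativity $\FPdim_{\M(L,\psi)}(M_i(N))=\FPdim_\M(M_i)\cdot\FPdim_{\M(K,\alpha)}(N)=\sqrt{|K|}\,\FPdim_\M(M_i)$ then yields the claimed formula upon dividing through.

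The main technical obstacle is to justify the multiplicativity of $\FPdim$ across the three module categories with matching canonical normalizations; this is a general property of the balanced tensor product of module categories over a fusion category but should be carefully cited. As a cross-check---and as an independent alternative route, combined with uniqueness of the positive Frobenius--Perron eigenvector up to scalar---the proposed formula satisfies canonical normalization on $\M$: using $\sum_\rho(\dim_\kk\rho)^2=|K\cap g_iLg_i^{-1}|$ over the irreducible projective representations of that group (semisimplicity of the twisted group algebra, as in the proof of Theorem~\ref{thm:schur}) together with the double-coset partition $\sum_{Kg_iL}|Kg_iL|=|G|$, a direct computation gives $\sum_i\FPdim_\M(M_i)^2 = |G| = \FPdim(\C)$.
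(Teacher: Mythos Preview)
Your proposal is correct, but the route is more elaborate than necessary, and in fact the paper's proof is essentially the ``alternative route'' you sketch in your final paragraph. The paper simply observes that $(\FPdim_\M M_i)_i$ and $(\dim_\kk M_i)_i$ are both positive eigenvectors for multiplication by objects of $\C$ on $\mathrm{Gr}(\M)$, hence proportional by Frobenius--Perron uniqueness; writing $\FPdim_\M M_i = \lambda\,\dim_\kk M_i$ and imposing canonical normalization $\sum_i(\FPdim_\M M_i)^2=|G|$ immediately gives $\lambda=(|K|\,|L|)^{-1/2}$, which is exactly the sum-of-squares check you record at the end. Your main argument---evaluating $M_i$ as a functor on the regular module of $\M(K,\alpha)$ and invoking multiplicativity of FP dimension across $\M(K,\alpha)$, $\M$, and $\M(L,\psi)$---is structurally appealing and yields the same answer, but as you note, the multiplicativity $\FPdim_{\M(L,\psi)}(F(N))=\FPdim_\M(F)\cdot\FPdim_{\M(K,\alpha)}(N)$ under canonical normalization is a nontrivial compatibility statement that requires its own justification (it is true here, ultimately because all three FP dimensions are proportional to $\dim_\kk$ with the right scaling factors, but establishing that is tantamount to the eigenvector argument). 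The paper's approach buys directness with no external input beyond FP uniqueness; your approach buys a more conceptual picture of how FP dimensions interact across the $2$-category of module categories, at the cost of having to locate or prove the multiplicativity lemma.
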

\begin{proof}
There exists a positive number $\lambda$ such that ${\sf FPdim}_\mathcal{M} M_i=\lambda (\dim_\kk M_i)$ for all $i$, since both ${\sf FPdim}_\mathcal{M} M_i$ and $\dim_\kk M_i$ are Frobenius-Perron eigenvectors of multiplication by $X \in \mathcal{C}$, and such an eigenvector is unique up to scaling. 
Thus, $${\sf FPdim}_\mathcal{M} M_i = \lambda \; \frac{|K| \; |L|}{|K\cap g_iLg_i^{-1}|} \; \dim_\kk \rho_i.$$ So, summing the squares of these dimensions over all $i$, we get $|G|=\lambda^2 \; |K|\; |L|\; |G|$, which yields \linebreak $\lambda=(|K|\; |L|)^{-1/2}$. 
Thus, ${\sf FPdim}_\mathcal{M} M_i = ((|K||L|)^{1/2}/|K\cap g_iLg_i^{-1}|) \dim \rho_i$.
\end{proof}

\begin{lemma}\label{lem:dimFVAM}
For the object $M$ above, we have that $\dim_\kk F_V(A_M)=({\sf FPdim}_\mathcal{M} M)^2$.
\end{lemma}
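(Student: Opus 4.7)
The plan is to reduce the identity to a computation with Frobenius--Perron dimensions. Since $F_V$ is a fiber functor, $\dim_\kk F_V(X) = \FPdim_\C X$ for every $X \in \C$ by \cite[Proposition~4.5.7]{EGNO}, so it suffices to prove the equivalent statement $\FPdim_\C A_M = (\FPdim_\M M)^2$.

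First I would apply the decomposition \eqref{eq:AM} of $A_M$ into simples in $\C$ to write
\[
\FPdim_\C A_M \;=\; \sum_{X \in \Irr(\C)} m_X(M)\, \FPdim_\C X.
\]
Interpreting $m_X(M) = \dim_\kk \Hom_\M(X \otimes M, M)$ as the multiplicity $[X \otimes M : M]$ of $M$ as a simple summand of $X \otimes M$ in $\M$, and recalling $R_\C = \sum_X (\FPdim_\C X) X$ from \eqref{eq:regularobjects}, this sum is precisely the multiplicity of $M$ in the object $R_\C \otimes M \in \M$.

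The crux of the proof is then the identity
\[
R_\C \otimes M \;=\; (\FPdim_\M M)\, R_\M
\]
in the Grothendieck group of $\M$. I would establish this by the standard uniqueness argument for Frobenius--Perron eigenvectors: the object $R_\C \otimes M$ satisfies $X \otimes (R_\C \otimes M) = (\FPdim_\C X)(R_\C \otimes M)$ for every $X \in \C$, so it is a positive FP-eigenvector for the $\C$-action on $\M$ and hence proportional to $R_\M$. The proportionality scalar is pinned down by applying $\FPdim_\M$ to both sides and invoking the canonical normalization $\FPdim_\M R_\M = \FPdim_\C R_\C$, yielding the coefficient $\FPdim_\M M$.

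From \eqref{eq:regularobjects} the coefficient of $M$ in $R_\M$ equals $\FPdim_\M M$, so the coefficient of $M$ in $(\FPdim_\M M) R_\M$ equals $(\FPdim_\M M)^2$, finishing the proof. No real obstacle is expected; the only point that warrants care is the normalization convention for $\FPdim_\M$, which is what allows $R_\C \otimes M$ and $(\FPdim_\M M) R_\M$ to be equated on the nose rather than only up to a universal constant.
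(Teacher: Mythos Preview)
Your proposal is correct and follows essentially the same approach as the paper: both arguments hinge on the identity $R_\C \otimes M = (\FPdim_\M M)\, R_\M$ established via uniqueness of the Frobenius--Perron eigenvector together with the canonical normalization, and then read off $\dim_\kk F_V(A_M)$ as the multiplicity of $M$ in $R_\C \otimes M$. The paper phrases the last step through $\dim_\kk \Hom_\M(R_\C \otimes M, M)$ rather than via multiplicities, but this is only a cosmetic difference.
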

\begin{proof}
We have that $R_{\mathcal{C}} \otimes M$ is an eigenvector in $\mathrm{Gr}(\mathcal{M})$ for the left action of any $X \in \mathcal{C}$, thus it must be a scalar multiple of $R_\mathcal{M}$ \cite[Proposition~8.5]{ENO}.
Since $\FPdim_\mathcal{M}(R_{\mathcal{C}} \otimes M) = (\FPdim_\mathcal{C}R_\mathcal{C})(\FPdim_\mathcal{M}M)$, the canonical normalization condition above gives $R_{\mathcal{C}} \otimes M = (\FPdim_\mathcal{M} M) R_{\mathcal{M}}$.
From \eqref{eq:AM} we see that \eqref{eq:dimFVAM} can be rewritten as $\dim_\kk F_V(A_M)=\dim_\kk\Hom_\C(R_\C, A_M)$, then \eqref{eq:internalend} gives the first equality of
\[
\dim_\kk F_V(A_M)=\dim_\kk {\sf Hom}_{\mathcal{M}}(R_{\mathcal{C}} \otimes M,  M)=\dim_\kk {\sf Hom}_{\mathcal{M}}((\FPdim_\mathcal{M} M) R_{\mathcal{M}},  M)=(\FPdim_\mathcal{M} M)^2. \qedhere
\]
\end{proof}

Next, we establish necessary and sufficient conditions for  $A_M$ to be $\kk$-commutative.

\begin{lemma} \label{lem:r(NgLp)}
The rank of ${\sf Fun}_{\mathcal{C}}(\M_0, \mathcal{M}^{K,\alpha}(L,\psi))$ is less than or equal to $\dim_\kk F_V(A_M)$, with equality if and only if $A_M$ is $\kk$-commutative.
\end{lemma}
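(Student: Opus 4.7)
The plan is to reduce the statement to a standard fact about finite-dimensional $\kk$-algebras via Lemma~\ref{lem:corresp}, which identifies $\mathrm{rank}\bigl(\Fun_{\mathcal{C}}(\M_0, \M^{K,\alpha}(L,\psi))\bigr)$ with the number of isomorphism classes of simple right modules over the finite-dimensional $\kk$-algebra $R := F_V(A_M)$. For any finite-dimensional algebra $R$ over an algebraically closed field, passing to the semisimple quotient $R/\mathrm{rad}(R) \cong \prod_{i=1}^r M_{n_i}(\kk)$ gives the chain
\[
(\text{number of simple }R\text{-modules})\ =\ r\ \leq\ \sum_{i=1}^r n_i^2\ =\ \dim_\kk R/\mathrm{rad}(R)\ \leq\ \dim_\kk R,
\]
with equality throughout if and only if each $n_i = 1$ and $R$ is semisimple, i.e.\ if and only if $R$ is semisimple and commutative.

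Granting the above, the entire content of the lemma reduces to verifying that $R = F_V(A_M)$ is semisimple as a $\kk$-algebra: the inequality and the characterization of equality (namely, $F_V(A_M)$ commutative, which by Definition~\ref{def:Cpath} is the definition of $A_M$ being $\kk$-commutative) both follow directly from the displayed chain. Thus the main obstacle in the proof is precisely the semisimplicity of $R$, which is \emph{a priori} only guaranteed as an algebra in $\mathcal{C}$ (via Lemma~\ref{lem:AM}, since $\M^{K,\alpha}(L,\psi)$ is a semisimple $\C$-module category), not as an abstract $\kk$-algebra.

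To overcome this obstacle, the approach is the following. Under the tensor equivalence $\mathcal{C} \overset{\otimes}{\sim} \Rep(H(N,\gamma))$ of Notation~\ref{not:H(N,gamma)}, the category $\Mod_{\mathcal{C}}\text{-}A_M$ is identified with the semisimple category of right $R$-modules internal to $\Rep(H(N,\gamma))$. Let $J = \mathrm{rad}(R)$. By \cite[Theorem~2.1]{Linchenko}, the $H(N,\gamma)$-action on $R$ preserves $J$, so $J$ is a subobject of $R$ inside $\Mod_{\mathcal{C}}\text{-}R$. Semisimplicity of this module category forces the inclusion $J \hookrightarrow R$ to split, producing an idempotent $e \in R$ with $eR = J$. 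Since $J$ is nilpotent, $e = e^n \in J^n = 0$ for large $n$, so $J = 0$ and $R$ is semisimple. Combining this with the Artin-Wedderburn chain above yields both the inequality and the equality characterization, completing the proof.
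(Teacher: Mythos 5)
Your proposal is correct, and its skeleton is the same as the paper's: Lemma~\ref{lem:corresp} converts the rank of ${\sf Fun}_{\mathcal{C}}(\M_0, \mathcal{M}^{K,\alpha}(L,\psi))$ into a count of simple modules over $F_V(A_M)$, and the Artin--Wedderburn theorem then yields both the inequality and the characterization of equality. Where you genuinely add something is the semisimplicity of $F_V(A_M)$ as an abstract $\kk$-algebra: the paper's proof simply asserts ``since the $\kk$-algebra $F_V(A_M)$ is semisimple'' and stops, even though the paper's own terminology only guarantees that $A_M$ is semisimple as an algebra \emph{in} $\C$ (i.e., that $\Mod_\C$-$A_M$ is a semisimple category), which is a priori a different statement. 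You correctly isolate this as the real content and close it: Linchenko's theorem (which the paper itself invokes in Section~\ref{sec:CHopf}, and which applies here since $H(N,\gamma)$ is semisimple by Theorem~\ref{thm:recon} and cosemisimple because $\mathrm{char}\,\kk=0$) makes $J=\mathrm{rad}(F_V(A_M))$ an $H(N,\gamma)$-stable two-sided ideal, hence a subobject of $F_V(A_M)$ in the semisimple category $\Mod_\C$-$A_M$; the resulting splitting exhibits $J$ as $eF_V(A_M)$ for an idempotent $e\in J$, and nilpotence of $J$ forces $e=0$, so $J=0$. This argument is valid, and it also explains why your slightly more general Artin--Wedderburn chain (passing through $R/\mathrm{rad}(R)$) collapses to the paper's statement: without the semisimplicity step, ``equality iff commutative'' would actually be false (e.g.\ for a commutative non-semisimple algebra), so your addition is not a pedantic flourish but a needed justification of a step the paper leaves implicit.
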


\begin{proof}
From Lemma~\ref{lem:corresp}, the simple objects of ${\sf Fun}_{\mathcal{C}}(\M_0, \mathcal{M}^{K,\alpha}(L,\psi))$
are in bijection with simple \red{$F_V(A_M)$-modules}.
Since the $\kk$-algebra $F_V(A_M)$ is semisimple, the statement follows from the Artin-Wedderburn theorem, recalling that a semisimple $\kk$-algebra is commutative if and only if  each of its simple modules is 1-dimensional.
\end{proof}

\begin{proposition} \label{prop:inequality}  Let $m_{\gamma, \psi}(h)$ denote the \red{number} of irreducible projective representations of the group $N \cap hLh^{-1}$ with Schur multiplier $\gamma\psi^{-1}_h$ from~\eqref{eq:aijg}. Then we have that 
\begin{equation} \label{eq:laststep}
\sum_{h \in N \backslash G / L} m_{\gamma, \psi}(h) ~\leq~ \frac{|K| \; |L|}{|K \cap gLg^{-1}|^2}(\dim \rho)^2.
\end{equation}
This is an equality if and only if the algebra $A_M$ in $\mathcal{C}$ is $\kk$-commutative.
\end{proposition}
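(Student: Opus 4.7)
The plan is to reduce \eqref{eq:laststep} entirely to the inequality already established in Lemma~\ref{lem:r(NgLp)}, namely that
\[
\text{rank}\,{\sf Fun}_{\mathcal{C}}(\mathcal{M}_0,\, \mathcal{M}^{K,\alpha}(L,\psi)) \;\leq\; \dim_\kk F_V(A_M),
\]
with equality if and only if $A_M$ is $\kk$-commutative. Thus it suffices to show that the left-hand side of~\eqref{eq:laststep} equals the rank of ${\sf Fun}_{\mathcal{C}}(\mathcal{M}_0, \mathcal{M}^{K,\alpha}(L,\psi))$, and that the right-hand side equals $\dim_\kk F_V(A_M)$.

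For the left-hand side, first invoke Proposition~\ref{prop:modgrpthl}, which tells us that the rank of ${\sf Fun}_{\mathcal{C}}(\mathcal{M}^{K,\alpha}(N,\gamma), \mathcal{M}^{K,\alpha}(L,\psi))$ coincides with the value~\eqref{eq:arrow} for the pair $(L_i,\psi_i)=(N,\gamma)$ and $(L_j,\psi_j)=(L,\psi)$. By Proposition~\ref{prop:Fun-Vec}, this value is $\sum_{h \in N\backslash G / L} m_{i,j}(h)$, where $m_{i,j}(h)$ counts irreducible projective representations of $N \cap hLh^{-1}$ with Schur multiplier $\psi^h_{i,j}=(\gamma\psi^{-1})_h$ given by~\eqref{eq:aijg}. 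Unpacking notation from the statement of the proposition identifies this with $m_{\gamma,\psi}(h)$, matching the left-hand side of~\eqref{eq:laststep}.

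For the right-hand side, apply Lemma~\ref{lem:dimFVAM} to obtain $\dim_\kk F_V(A_M) = ({\sf FPdim}_\mathcal{M} M)^2$, then apply Lemma~\ref{lem:FPdimM} to $M=M(g,\rho)$ to compute
\[
({\sf FPdim}_\mathcal{M} M)^2 \;=\; \frac{|K|\,|L|}{|K\cap gLg^{-1}|^2}\,(\dim\rho)^2,
\]
which is exactly the right-hand side of~\eqref{eq:laststep}. Combining these two identifications with Lemma~\ref{lem:r(NgLp)} yields both the inequality and the characterization of the equality case.

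There is no real obstacle: the proof is an assembly of the previously established dimension calculations and rank formulas. The only point requiring care is bookkeeping with the 2-cocycle notation, confirming that the $\psi^h_{i,j}$ appearing in~\eqref{eq:aijg} (with $\psi_i=\gamma$, $\psi_j=\psi$) coincides with the cocycle $\gamma\psi_h^{-1}$ named in the statement of the proposition, so that the count $m_{\gamma,\psi}(h)$ in~\eqref{eq:laststep} literally matches the summand of~\eqref{eq:arrow}.
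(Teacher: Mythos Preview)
Your proof is correct and follows essentially the same approach as the paper: identify the left-hand side with the rank of ${\sf Fun}_{\mathcal{C}}(\mathcal{M}_0, \mathcal{M}^{K,\alpha}(L,\psi))$ via Proposition~\ref{prop:modgrpthl}, identify the right-hand side with $\dim_\kk F_V(A_M)$ via Lemmas~\ref{lem:FPdimM} and~\ref{lem:dimFVAM}, and then invoke Lemma~\ref{lem:r(NgLp)}. Your version simply spells out more of the notational bookkeeping.
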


\begin{proof}
By Proposition \ref{prop:modgrpthl}, \red{the left hand side} is equal to the rank of ${\sf Fun}_{\mathcal{C}}(\M_0, \mathcal{M}^{K,\alpha}(L,\psi))$. 
By Lemmas~\ref{lem:FPdimM} and~\ref{lem:dimFVAM}, the right hand side is equal to  $\dim_\kk F_V(A_M)$, so this proposition is equivalent to Lemma~\ref{lem:r(NgLp)}.
\end{proof}

Now we can determine when a $\mathcal{C}(G, \omega, K, \alpha)$-tensor algebra is equivalent to a $\mathcal{C}(G, \omega, K, \alpha)$-path algebra in the sense of Definition~\ref{def:Cpath}.

\begin{corollary} \label{cor:pathgt}   Take $\mathcal{C}:= \mathcal{C}(G, \omega, K, \alpha)$ a group-theoretical fusion category equipped with a fiber functor, and let $T_S(E)$  be a $\mathcal{C}$-tensor algebra. Consider the decomposition of $S$ into a direct sum of indecomposable  semisimple algebras $A_i$ in $\mathcal{C}$. Then,
$T_S(E)$ is equivalent to a $\mathcal{C}$-path algebra if and only if each $A_i$ is Morita equivalent to an algebra of the form $A_{M(g, \rho)}$ for which equality holds in~\eqref{eq:laststep}.  \qed
\end{corollary}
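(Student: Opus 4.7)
The plan is to reduce the statement to the indecomposable case and then invoke Proposition~\ref{prop:inequality}. First I would unpack Definition~\ref{def:Cpath}: $T_S(E)$ is a $\C$-path algebra exactly when $F_V(S)$ is a commutative $\kk$-algebra. Since $F_V$ is a tensor functor, the decomposition of $S = \bigoplus_i A_i$ as an algebra in $\C$ yields a decomposition $F_V(S) = \bigoplus_i F_V(A_i)$ as $\kk$-algebras. Hence $F_V(S)$ is commutative iff each $F_V(A_i)$ is commutative, i.e.\ iff each indecomposable summand $A_i$ is itself $\kk$-commutative.

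Next I would interpret ``equivalence'' via Definition~\ref{def:equiv}: an equivalence $T_S(E) \sim T_{S'}(E')$ entails, in particular, a $\C$-module equivalence $\Mod_\C\text{-}S \sim \Mod_\C\text{-}S'$, which by definition is a Morita equivalence between $S$ and $S'$ in $\C$. Conversely, given any Morita equivalence between $S$ and $S'$ in $\C$ realized by $X \in \Bimod_\C(S,S')$, one can set $E' := \bar{X} \otimes_S E \otimes_S X$ to extend it to an equivalence of tensor algebras. Therefore, $T_S(E)$ is equivalent to some $\C$-path algebra iff $S$ is Morita equivalent in $\C$ to some $\kk$-commutative algebra $S'$.

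With these reductions, the proof goes as follows. For the forward direction, assume $T_S(E)$ is equivalent to a $\C$-path algebra $T_{S'}(E')$. Decompose $S' = \bigoplus_j A'_j$ into indecomposable summands in $\C$; by uniqueness of such a decomposition (Corollary~\ref{cor:Adecomp}) and invariance of indecomposable summands under Morita equivalence in $\C$, the $A'_j$ are in bijection with the $A_i$ up to Morita equivalence in $\C$. By Lemma~\ref{lem:AM} each $A'_j$ is Morita equivalent to an algebra $A_{M(g,\rho)}$; replacing $A'_j$ by such a representative preserves $\kk$-commutativity of $S'$. Since each $F_V(A_{M(g,\rho)})$ is commutative (as a direct summand of the commutative algebra $F_V(S')$), Proposition~\ref{prop:inequality} forces equality in~\eqref{eq:laststep} for each such $M(g,\rho)$. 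For the backward direction, given Morita representatives $A_{M_i}$ of each $A_i$ satisfying equality in~\eqref{eq:laststep}, Proposition~\ref{prop:inequality} ensures each $A_{M_i}$ is $\kk$-commutative; then $S' := \bigoplus_i A_{M_i}$ is Morita equivalent to $S$ in $\C$ and $F_V(S') = \bigoplus_i F_V(A_{M_i})$ is commutative, and any choice of bimodule $E'$ produced by the Morita context yields an equivalence $T_S(E) \sim T_{S'}(E')$ with $T_{S'}(E')$ a $\C$-path algebra.

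The main subtlety — not truly an obstacle but the step requiring the most care — is tracking that $\kk$-commutativity passes through the direct sum decomposition at the level of $\kk$-algebras (not just as vector spaces), and that replacing an indecomposable summand $A'_j$ of the base algebra by a Morita-equivalent representative inside its Morita class preserves the $\kk$-commutativity of the whole base algebra. Both are clean because $F_V$ is an exact tensor functor that respects the algebra structure of direct sums, and Morita equivalence in $\C$ acts independently on each indecomposable block. Once this is verified, the corollary follows immediately by applying Proposition~\ref{prop:inequality} blockwise.
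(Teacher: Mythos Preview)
Your overall strategy matches the paper's intent (the paper records the corollary with a bare \qed, viewing it as immediate from Proposition~\ref{prop:inequality}), and your reduction to the indecomposable summands via Corollary~\ref{cor:Adecomp} together with the backward direction are both fine.

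There is, however, a genuine gap in the forward direction, precisely at the step you flag as ``not truly an obstacle.'' You assert that ``replacing $A'_j$ by [a Morita-equivalent $A_{M(g,\rho)}$] preserves $\kk$-commutativity of $S'$,'' and justify this by saying Morita equivalence acts block-by-block. But Morita equivalence does \emph{not} preserve $\kk$-commutativity: already in $\sVec$ the algebras $\kk$ and $\mathrm{Mat}_2(\kk)$ are Morita equivalent, and only the former is commutative. So knowing that the summand $A'_j$ of $S'$ is $\kk$-commutative does not, by itself, hand you a $\kk$-commutative $A_{M(g,\rho)}$ in the same Morita class; and your parenthetical ``as a direct summand of the commutative algebra $F_V(S')$'' is circular, since after replacement $A_{M(g,\rho)}$ is no longer a summand of the original $S'$.

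The fix is short but uses Lemmas~\ref{lem:dimFVAM} and~\ref{lem:r(NgLp)} rather than formal properties of $F_V$. Write $r$ for the rank of $\Fun_\C(\M_0,\M^{K,\alpha}(L,\psi))$. If $A'_j$ is $\kk$-commutative then, by Lemma~\ref{lem:corresp} and Artin--Wedderburn, $\dim_\kk F_V(A'_j)=r$. Realize $A'_j=\EEnd(M')$ for some object $M'$ of $\M^{K,\alpha}(L,\psi)$ and pick any simple summand $M=M(g,\rho)$ of $M'$. The proof of Lemma~\ref{lem:dimFVAM} works for any nonzero object, so $\dim_\kk F_V(A'_j)=(\FPdim_\M M')^2\geq(\FPdim_\M M)^2=\dim_\kk F_V(A_M)$, while Lemma~\ref{lem:r(NgLp)} gives $\dim_\kk F_V(A_M)\geq r$. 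Hence $\dim_\kk F_V(A_M)=r$, so $A_M$ is $\kk$-commutative and equality holds in~\eqref{eq:laststep} by Proposition~\ref{prop:inequality}. With this patch your argument goes through.
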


\subsection{Exact factorization case} \label{sec:exactfact} Keep the setting of the previous subsection. Below we also assume that $G = KN$ is an exact factorization, that is, we have  $|G| = |K| \; |N|$.

\begin{lemma} \label{lem:exactfact} Suppose that $G = KN$ is an exact factorization. Then, for any $g \in G$ and any subgroup $L$ of $G$, we get 
$$\sum_{h \in N \backslash G / L} |N \cap hLh^{-1}| ~\leq~ \frac{|K| \; |L|}{|K \cap gLg^{-1}|^2}.$$ 
Moreover, this is an equality if and only if $|N \cap hLh^{-1}| \cdot |K \cap gLg^{-1}| ~=~ |L|$
 for each $h \in G$.
\end{lemma}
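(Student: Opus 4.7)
The plan is to reduce the inequality to a pointwise bound of the form
\[
c_h \cdot d_g \;\leq\; |L|, \qquad c_h := |N \cap hLh^{-1}|,\ \ d_g := |K \cap gLg^{-1}|,
\]
valid for all $h, g \in G$, and then perform a short averaging argument. From the orbit decomposition of $X := G/L$ under $N$, together with $|G| = |K||N|$, one has the identity $\sum_{h \in N\backslash G/L} c_h = \tfrac{1}{|N|}\sum_{xL \in X} c_{x}^{2}$ (using that $c$ is constant on $N$-orbits, since $n \in N$ normalizes $N$). Applying $c_x^2 \leq (|L|/d_g)^2$ term by term yields
\[
\sum_{h \in N\backslash G/L} c_h \;\leq\; \frac{|X|}{|N|}\cdot\frac{|L|^{2}}{d_g^{2}} \;=\; \frac{|K||L|}{d_g^{2}},
\]
and equality here is equivalent to $c_x = |L|/d_g$ for every $xL \in X$, which is the stated condition that $c_h \cdot d_g = |L|$ for every $h \in G$ (noting $c_h$ depends only on the $N$-$L$-double coset of $h$).

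The core of the argument is therefore the pointwise bound $c_h d_g \leq |L|$. I would set $\alpha := h^{-1}Nh \cap L$ and $\beta := g^{-1}Kg \cap L$, which are subgroups of $L$ of orders $c_h$ and $d_g$ respectively (they are conjugate to the original intersections). The key claim is that $\alpha \cap \beta = \{e\}$; once this holds, the product $\alpha\beta \subseteq L$ satisfies $|\alpha\beta| = |\alpha|\cdot|\beta| = c_h d_g$, so $c_h d_g \leq |L|$ follows immediately.

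The main obstacle is therefore verifying $\alpha \cap \beta = \{e\}$. This reduces to the auxiliary fact that $K \cap z^{-1}Nz = \{e\}$ for every $z \in G$ (and its symmetric twin with $K, N$ swapped): any element of $\alpha \cap \beta$ lies in $h^{-1}Nh \cap g^{-1}Kg$, and conjugation by $g$ turns it into an element of $K$ inside the $N$-conjugate $z^{-1}Nz$ with $z = hg^{-1}$. To prove this auxiliary fact, I would write $z = n_0 k_0$ in the factorization $G = NK$; since $n_0 \in N$ normalizes $N$, one gets $z^{-1}Nz = k_0^{-1}Nk_0$, and intersecting with $K = k_0^{-1}Kk_0$ collapses the claim to $K \cap N = \{e\}$. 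The only real subtlety will be consistently distinguishing the factorizations $G = KN$ and $G = NK$ while manipulating conjugates, after which both the main inequality and its equality case fall out of the averaging step above.
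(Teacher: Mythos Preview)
Your proposal is correct and follows essentially the same route as the paper: both convert the double-coset sum into a sum over cosets of $L$ in $G$ (the paper sums over all of $G$, you over $G/L$, which differ only by a factor of $|L|$), reduce to the pointwise bound $|h^{-1}Nh \cap L|\cdot|g^{-1}Kg \cap L| \leq |L|$, and derive that bound from the fact that conjugates $g^{-1}Kg$ and $h^{-1}Nh$ still give an exact factorization of $G$. Your version is somewhat more explicit about why $K \cap z^{-1}Nz = \{e\}$ for arbitrary $z$, which the paper asserts implicitly by declaring $G = (g^{-1}Kg)(h^{-1}Nh)$ an exact factorization.
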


\begin{proof}
We can rewrite the inequality in question as 
$$\sum_{h \in  G} \frac{|N \cap hLh^{-1}|^2}{|N| \cdot |L|} ~\leq~ \frac{|K| \; |L|}{|K \cap gLg^{-1}|^2}$$ 
or 
$$\sum_{h \in  G} |N \cap hLh^{-1}|^2  \cdot |K \cap gLg^{-1}|^2 ~\leq~ |G| \; |L|^2.$$
So, it suffices to show that for each $h \in G$ we have $|N \cap hLh^{-1}|  \cdot |K \cap gLg^{-1}|  \; \leq \; |L|.$ But this holds because $N \cap hLh^{-1} = h^{-1}N h \cap L$, and $K \cap gLg^{-1} = g^{-1}Kg \cap L$, and $G = (g^{-1}Kg)(h^{-1}N h)$ is an exact factorization. We also get that the inequality in the claim is an equality if and only if $|N \cap hLh^{-1}|  \cdot |K \cap gLg^{-1}|  \; = \; |L|$  for each $h \in G$. 
\end{proof}

\begin{proposition} \label{prop:exactfact} 
Suppose that  $G = KN$ is an exact factorization. Then an algebra $A_{M(g,\rho)}$  as in Lemma \ref{lem:AM} is $\kk$-commutative if and only if  the following conditions are simultaneously satisfied:
\begin{enumerate}
\smallskip
\item[\textnormal{(a)}] $\dim \rho = 1$ (hence, $\alpha\psi^{-1}_g$ is a coboundary on $K \cap gLg^{-1}$);
\smallskip
\item[\textnormal{(b)}]  the group $N \cap hLh^{-1}$ is abelian for all $h \in G$, and $\gamma\psi^{-1}_h$ is a coboundary on this group; and 
\smallskip
\item[\textnormal{(c)}]  $|N \cap hLh^{-1}| \cdot |K \cap gLg^{-1}| ~=~ |L|$ for each $h \in G$.
\end{enumerate}
\end{proposition}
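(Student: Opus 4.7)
The plan is to derive this by combining Proposition~\ref{prop:inequality} with Lemma~\ref{lem:exactfact} and a sharpening of the left-hand side of the main inequality. Proposition~\ref{prop:inequality} reduces the question to showing that equality holds in
\begin{equation*}
\sum_{h \in N\backslash G/L} m_{\gamma,\psi}(h) \ \leq\ \frac{|K|\,|L|}{|K \cap gLg^{-1}|^2}(\dim\rho)^2
\end{equation*}
if and only if conditions (a)--(c) hold. I would insert two intermediate quantities to obtain a chain of three inequalities, each of whose equality criterion is exactly one of (a), (b), (c).

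First I would bound each $m_{\gamma,\psi}(h)$ by $|N\cap hLh^{-1}|$. The twisted group algebra of $N\cap hLh^{-1}$ with cocycle $\gamma\psi_h^{-1}$ is semisimple of $\kk$-dimension $|N\cap hLh^{-1}|$, and its simple modules are exactly the irreducible $\gamma\psi_h^{-1}$-projective representations; hence the sum of the squared dimensions of these representations equals $|N\cap hLh^{-1}|$, and in particular the count is at most $|N\cap hLh^{-1}|$. Coupling this with Theorem~\ref{thm:schur}, which caps the count by the number of $\gamma\psi_h^{-1}$-regular conjugacy classes of $N\cap hLh^{-1}$, the equality $m_{\gamma,\psi}(h) = |N\cap hLh^{-1}|$ holds exactly when every irreducible projective representation is one-dimensional, which in turn amounts to $N\cap hLh^{-1}$ being abelian and $\gamma\psi_h^{-1}$ being a coboundary; summing over double coset representatives, this yields condition (b). The second inequality, $\sum_h |N\cap hLh^{-1}| \leq |K|\,|L|/|K \cap gLg^{-1}|^2$, is exactly Lemma~\ref{lem:exactfact}, with equality iff condition (c). Finally, $1 \leq (\dim\rho)^2$ is equality iff $\dim\rho = 1$, which by evaluating the identity $\alpha\psi_g^{-1}(x,y) = \rho(x)\rho(y)\rho(xy)^{-1}$ forces $\alpha\psi_g^{-1}$ to be a coboundary on $K\cap gLg^{-1}$, giving condition (a) including its parenthetical.

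Composing these, equality in the outer bound is equivalent to equality at each step, which is precisely the simultaneous conditions (a)--(c). The main obstacle will be verifying carefully in step (b) that the equality $m_{\gamma,\psi}(h) = |N\cap hLh^{-1}|$ forces \emph{both} abelianness of $N \cap hLh^{-1}$ and cohomological triviality of $\gamma\psi_h^{-1}$---not just one of these---which is why one needs the two independent upper bounds noted above (the sum-of-squares identity from the twisted group algebra, and the conjugacy-class count from Schur's theorem). The independence of (b) and (c) on the choice of double coset representative, so that the conditions may be stated for all $h \in G$ as in the proposition, follows from the fact that conjugation by $N$ on the left and by $L$ on the right preserves both the isomorphism class of $N \cap hLh^{-1}$ and the cohomology class of $\gamma\psi_h^{-1}$ up to this conjugation.
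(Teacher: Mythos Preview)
Your proposal is correct and follows essentially the same route as the paper: both insert the intermediate quantity $\sum_h |N\cap hLh^{-1}|$ to split the inequality of Proposition~\ref{prop:inequality} into a three-term chain, then identify equality at each link with conditions (b), (c), (a) respectively via Artin--Wedderburn, Lemma~\ref{lem:exactfact}, and $\dim\rho\geq 1$. One small remark: your claim that ``two independent upper bounds'' are needed for step (b) is overstated---the sum-of-squares identity alone already forces the twisted group algebra to be commutative, which directly gives both abelianness and symmetry (hence triviality) of the cocycle, so the appeal to Theorem~\ref{thm:schur} is redundant.
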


\begin{proof}
Recall from Proposition~\ref{prop:inequality} that  $m_{\gamma, \psi}(h)$ denotes the \red{number} of irreducible projective representations of the group $N \cap hLh^{-1}$ with Schur multiplier $\gamma\psi^{-1}_h$. So, using the Artin-Wedderburn theorem and Lemma~\ref{lem:exactfact} we have 
$$\sum_{h \in N \backslash G / L} m_{\gamma, \psi}(h) ~\leq~ \sum_{h \in N \backslash G / L} |N \cap hLh^{-1}| ~\leq~ \frac{|K| \; |L|}{|K \cap gLg^{-1}|^2}  ~\leq~ \frac{|K| \; |L|}{|K \cap gLg^{-1}|^2}(\dim \rho)^2.$$
Moreover, we know by Proposition~\ref{prop:inequality} that $A_{M(g,\rho)}$ is $\kk$-commutative if and only if all three inequalities are equalities. Now the first and third inequalities are equalities if and only if, respectively, conditions~(b) and~(a) hold. By Lemma~\ref{lem:exactfact} the second inequality is an equality if and only if condition~(c) holds.
\end{proof}

Now we finish Example~\ref{ex:H8gt} and the work in Section~\ref{sec:H8} below.

\begin{example}  \label{ex:H8path}
Take $G = D_8$, the exact factorization $KN$ for $K = \mathbb{Z}_2 = \langle z \rangle$ and $N = \mathbb{Z}_2 \times \mathbb{Z}_2 = \langle x,y \rangle$, and $\mathcal{C}(D_8, \omega, \mathbb{Z}_2, 1) \overset{\otimes}{\sim} {\sf Rep}(H_8)$. For each $\mathcal{M}^{K,1}(L,1) \in \textnormal{Irr}({\sf Mod}(\mathcal{C}))$, we choose $M = M(e, \rho^{K \cap L}_{\textnormal{triv}})$ as in the proof of Proposition~\ref{prop:H8match}, and we study the $\kk$-commutativity of the indecomposable semisimple algebra $A_M \in \mathcal{C}$ via Proposition~\ref{prop:exactfact} as follows. Note that by our choice of $M$ we always have that condition (a) of Proposition~\ref{prop:exactfact} holds.

\brk 

For $L = \langle e \rangle$, we have that $K \cap L =  \langle e \rangle$ and that $N \cap h L h^{-1} =  \langle e \rangle$ for all $h \in G$. Therefore, conditions (b) and (c) of Proposition~\ref{prop:exactfact} hold, and $A_M$ is $\kk$-commutative in this case.

\brk 

For $L = \langle x \rangle$ or $\langle xy \rangle$, we have that $K \cap L =  \langle e \rangle$ and that $|N \cap h L h^{-1}| = 2 $ for all $h \in G$. So, Proposition~\ref{prop:exactfact}(b,c) hold, and $A_M$ is $\kk$-commutative in these cases.

\brk 

For $L = \langle z \rangle$, we have that $K \cap L =  \langle z \rangle$ and that $|N \cap h L h^{-1}| = 1 $ for all $h \in G$. So,  $A_M$ is $\kk$-commutative.

\brk 

For $L = \langle x, y \rangle$, we have that $K \cap L =  \langle e \rangle$ and that $N \cap h L h^{-1} = N $ for all $h \in G$. Take $h = e$, and recall from Section~\ref{sec:H8} that $\gamma = \mu$ of \eqref{eq:Z2xZ2}.  
Since $\omega|_N$ is trivial, according to~\eqref{eq:aijg} we get
$$\mu \mu^{-1}_e(x^{i_1}y^{j_1}, x^{i_2}y^{j_2}) 
~=~ \mu(x^{i_1}y^{j_1}, x^{i_2}y^{j_2})
\cdot \mu(x^{-i_2}y^{-j_2}, x^{-i_1}y^{-j_1})
~=~ (-1)^{j_1 i_2 + i_1 j_2},
$$
which is a 2-cocycle on $N$ cohomologous to $\mu$. 
Therefore, $\mu \mu^{-1}_e$ is not a coboundary on $N$. So $A_M$ is not $\kk$-commutative in this case, as Proposition~\ref{prop:exactfact}(b) fails.

\brk 

For $L = \langle xy, z \rangle$, we have that $K \cap L =  \langle z \rangle$ and that $|N \cap h L h^{-1}| = 2 $ for all $h \in G$. So, Proposition~\ref{prop:exactfact}(b,c) hold, and $A_M$ is $\kk$-commutative in this case.

A complete count of the indecomposable bimodules for each pair of algebras above, and thus classification of minimal faithful $\Rep(H_8)$-path algebras, will be carried out in future work.
\end{example}

\section*{Acknowledgments}
The authors would like to thank Dmitri Nikshych and the anonymous referee for pointing out an error in the original version of Theorem~\ref{thm:param}(ii). We would also like to thank Nikshych for a reference for the 3-cocycle in Example~\ref{ex:D8nontriv}. The last author would like to thank Jonathan Beardsley for helpful comments on a preliminary version of this work.
P. Etingof \red{was partially supported} by the US National Science Foundation grant \#DMS-1502244. C. Walton \red{was partially supported} by a research fellowship from the Alfred P. Sloan foundation, and by the US National Science Foundation grants \#DMS-1663775, 1903192.

\bibliography{quiversemisimple}
%content of .bbl file pasted below

\end{document}